\newcommand{\PreserveBackslash}[1]{\let\temp=\\#1\let\\=\temp}
\newcolumntype{C}[1]{>{\PreserveBackslash\centering}p{#1}}
\newcolumntype{R}[1]{>{\PreserveBackslash\raggedleft}p{#1}}
\newcolumntype{L}[1]{>{\PreserveBackslash\raggedright}p{#1}}
\newcommand{\RQ}{\mathcal{R}}
\def\wbar{\accentset{{\cc@style\underline{\mskip8mu}}}}
\renewcommand{\vec}[1]{\mbox{\boldmath \small $#1$}}
\newcommand{\R}{\ensuremath{\mathbb{R}}}
\theoremstyle{plain}
\newtheorem{theorem}{Theorem}
\newtheorem{defn}{Definition}[section]
\newtheorem{lemma}{Lemma}
\newtheorem{remark}{Remark}[section]
\newtheorem{cor}{Corollary}[section]
\newtheorem{pro}{Proposition}[section]
\newtheorem{example}{Example}[section]
\begin{document}

\title{Nodal domain theorems for $p$-Laplacians on signed graphs}
\author{Chuanyuan Ge\footnotemark[1]\and Shiping Liu\footnotemark[2] \and Dong Zhang\footnotemark[3]}

\footnotetext[1]{School of Mathematical Sciences, 
University of Science and Technology of China, Hefei 230026, China. \\
Email address:
{\tt gechuanyuan@mail.ustc.edu.cn} }
\footnotetext[2]{School of Mathematical Sciences, 
University of Science and Technology of China, Hefei 230026, China. \\
Email address:
{\tt spliu@ustc.edu.cn}
}
\footnotetext[3]{LMAM and School of Mathematical Sciences, 
        Peking University,  
      100871 Beijing, China.
\\
Email addresses: 
{\tt dongzhang@math.pku.edu.cn}\;\; and \; {\tt 13699289001@163.com}
}
\date{}\maketitle
\begin{abstract}
We establish various nodal domain theorems  for $p$-Laplacians on signed graphs, which unify most of the existing results on nodal domains of graph $p$-Laplacians and arbitrary symmetric matrices. Based on our nodal domain estimates, we obtain a higher order Cheeger inequality that relates the variational eigenvalues of $p$-Laplacians and Atay-Liu's multi-way Cheeger constants on signed graphs. In the particular case of $p=1$, this leads to several identities relating variational eigenvalues and multi-way Cheeger constants. Intriguingly, our approach also leads to new results on usual graphs, including a weak version of Sturm's oscillation theorem for graph $1$-Laplacians and nonexistence of eigenvalues between the largest and second largest variational eigenvalues of $p$-Laplacians with $p>1$ on connected bipartite graphs.
\end{abstract}

\section{Introduction}

Graph $p$-Laplacian is a natural discretization of the continuous $p$-Laplacian on Euclidean domains, and it is also a simple  nonlinearization of the Laplacian matrix. 
The spectrum of the graph $p$-Laplacian is closely related to many combinatorial properties of the graph itself; and its eigenpairs, reveal important information about the topology and  geometry of the graph. 
For example, similar to the original Euclidean $p$-Laplacian and graph linear Laplacian, the $p$-Laplacian on graphs has some important relations to Cheeger cut problem and shortest path problem on graphs. 
Just as the  Laplacian matrix which has been successfully used in diverse areas, the graph $p$-Laplacian has been also widely used in various applications, including spectral clustering \cite{ZS06,ZHS06,HeinBuhler2010,HeinBuhler2009}, data and image processing problems, semi-supervised learning and unsupervised learning \cite{UJT21,ZS06,ZHS06}. 
Much recent work has shown that algorithms based on the graph $p$-Laplacian perform better than classical algorithms based on the linear Laplacian in solving these practical problems in image science. 

The theoretical aspects of $p$-Laplacians on graphs and networks are still not well understood due to the nonlinearity. 
Among several  progresses in this direction, a remarkable development is that the second eigenvalue has a mountain-pass characterization and  it is a variational eigenvalue which satisfies the Cheeger inequality \cite{Amghibech,HeinBuhler2009}. 
 Another important result is the nodal domain count for graph $p$-Laplacians, including an interesting  relation that connects the nodal domains of the $p$-Laplacian  and the multi-way Cheeger constants on graphs \cite{TudiscoHein18}. 
 For the limiting case $p=1$, the spectral theory for graph 1-Laplacian was proposed by Hein and B\"uhler \cite{HeinBuhler2010} for 1-spectral clustering, and was latter studied by Chang  \cite{Chang}   from a variational point of view. 
 For example, Cheeger’s constant, which has only some upper and lower bounds given by the second eigenvalues of  $p$-Laplacians with $p>1$,  equals the second  eigenvalue of graph 1-Laplacian \cite{HeinBuhler2010,Chang}. 
 Moreover, any Cheeger set can be identified with any strong nodal domain of any eigenfunction corresponding to the second eigenvalue of graph 1-Laplacian.
 
 To some extent, nodal domain theory provides a good perspective for understanding the spectrum of graph $p$-Laplacians. 
Indeed, various versions of discrete nodal domain theory have been developed in different contexts. 
A very useful context  should be the signed graphs, whose spectral theory has led to a number of breakthroughs in theoretical computer science and combinatorial geometry,  including the solutions to the  sensitivity conjecture \cite{Huang19} and the open problems on equiangular lines \cite{CKLY22,JTYZZ21,JTYZZ}. In addition, signed  graphs have many other practical applications on modeling  biological networks, social situations, ferromagnetism, and general signed networks \cite{Harary53, ArefWilson19,ArefMasonWilson20}. 
Therefore, it should be natural and useful to develop a general spectral theory that includes  nodal domain theorems on signed graphs. 
Along this line,  Ge and Liu  \cite{GL21+} provided  a definition of the strong and weak nodal domains on signed graphs, which is compatible with the classical one in \cite{DGLS01} on graphs. 
They also obtained  sharp estimates of the number of strong and weak nodal domains for generalized linear Laplacian on signed graphs. We notice that estimates of strong nodal domains on signed graphs has been established in an earlier work of Mohammadian \cite{Ali16}, see \cite[Remark 3.12]{GL21+}.
For more details and historical background of nodal domain theory, we refer the readers to \cite{GL21+}. We particularly mention that the results in Fiedler's classical 1975 paper \cite{Fiedler75} can be considered as nodal domain theorems on signed trees (see \cite[Section 5]{GL21+}). In 2013, Berkolaiko \cite{Berkolaiko13} and Colin de Verdi\`{e}re \cite{CdV13} computed the nodal count of edges on signed graphs by allowing the signs of each edge to become complex. See  Remark \ref{remark:2.2} for more detailed comments.

The combination of signed versions and nonlinear analogs of nodal domain theorems is the main focus of this paper. 
To the best of our knowledge, the $p$-Laplacian on signed graphs has not been well studied. 
A related research was  given in \cite{JMZ} for $p$-Laplacians on  oriented hypergraphs,  which includes the $p$-Laplacian on  signed graphs as a special case. 
However, that paper does not focus on the nodal domain property, so  there are no sufficiently in-depth results on nodal domain theorems for $p$-Laplacians on signed graphs. 

In this paper, we systematically establish a nodal domain theory for $p$-Laplacians on signed graphs, which unifies the ideas and approaches from these recent works \cite{TudiscoHein18,DPT21,GL21+,JMZ}. 
Based on our nodal domain estimates, we also obtain a  higher order Cheeger inequality  that relates the variational eigenvalues of $p$-Laplacians and Atay-Liu's multi-way Cheeger constants on signed graphs \cite{AtayLiu}. 
Although these results appear to be formally similar to that in \cite{TudiscoHein18,DPT21}, there are several key differences in both results and approaches. 
First, our upper bounds for the number of dual nodal domains for  $p$-Laplacians on signed graphs are new, and the proof relies heavily on the intersection property of Krasnoselskii genus. 
In particular, for $p>1$, the estimate of the number of dual weak  nodal domains, and the bound on the number of dual strong  nodal domains of the $k$-th eigenfunction with  minimal support, further require the odd homeomorphism deformation lemma in Struwe's book \cite{Struwe}; while the case of $p=1$ should be treated separately by using the localization property. 
It is worth noting that a cautious analysis gives us a stronger result for the signed 1-Laplacian case, which is also new for graph 1-Laplacian. 
Second, the approach we use to obtain the lower bound estimates for the number of strong nodal domains, further  relies on a duality argument by considering the quantity $\mathfrak{S}(f)+\overline{\mathfrak{S}}(f)$, which is similar to the linear case in \cite{GL21+}, but the nonlinear estimate requires more subtle techniques. 
Third, the $k$-way Cheeger inequality  connecting variational eigenvalues of $p$-Laplacians and Atay-Liu's $k$-way Cheeger constants on signed graphs is essentially new, although the proof is not difficult to anyone who is familiar with analysis or   spectral graph theory. 
Interestingly, this result also reveals that variational eigenvalues of the $1$-Laplacian on signed graphs are very closely related to certain combinatorial quantities on signed graphs. 
Fourth, it should be noted that many of the nodal domain properties of $p$-Laplacians are different on graphs of different signatures. 
For example, on a balanced graph,  the second eigenfunction has exactly two weak nodal domains (see \cite{DPT21}), which is not always the case on an unbalanced graph, see Example \ref{ex:weak}.
Very interestingly, we prove a nonlinear Perron-Frobenius theorem for $p$-Laplacians on antibalanced graphs, that is, the eigenfunction corresponding to the largest eigenvalue is  positive everywhere or negative everywhere. Moreover, the eigenfunction corresponding to the largest eigenvalue is unique up to a constant multiplication. 
 However, this does not hold for $p$-Laplacians on balanced graphs.

Even on the usual graphs, our theorems directly derive at least two new results:
\begin{itemize}
\item Any eigenfunction corresponding to the $k$-th variational eigenvalue $\lambda_k$ (such that $\lambda_k>\lambda_{k-1}$) of the graph 1-Laplacian with minimal support has at least $k+r-2$ zeros, where $r$ is the variational multiplicity of $\lambda_k$ (see Theorem \ref{thm:min nodal domain}). 
Recall Sturm’s oscillation theorem, which says that the $k$-th eigenfunction of the second-order linear ODE has exactly $(k-1)$ zeros. Our result actually shows that the $k$-th variational  eigenfunction of the graph 1-Laplacian with minimal support has at least $(k-1)$ zeros. Therefore, in a sense, we are actually building a weak version of Sturm's theorem for the graph 1-Laplacian. 
\item When $p>1$, there are no other eigenvalues between the largest and the second largest variational eigenvalues of the graph $p$-Laplacian on connected bipartite graphs (see Corollary \ref{cor:5.1}). 
This new phenomenon can be seen as a dual version of the classic result that there are no other eigenvalues between the smallest and the second smallest  variational  eigenvalues of the graph $p$-Laplacian.
\end{itemize}

The paper is structured as follows. 
In Section \ref{sec:pre}, we  collect preliminaries on $p$-Laplacians and signed graphs, particularly on the continuity and switching property of $p$-Laplacian spectrum of signed graphs. 
In Section \ref{sec:upper-nodal}, we present the upper bounds of strong and weak nodal domains for $p$-Laplacians on signed graphs, and discuss the related nodal domain properties on forests. 
In Section \ref{sec:Cheeger}, we
show multi-way Cheeger inequalities related to strong nodal domains involving $p$-Laplacians on  signed graphs. 
In Section \ref{sec:Perron}, we
establish a nonlinear   Perron-Frobenius theorem for the largest eigenvalue of the $p$-Laplacian on antibalanced graphs. 
In Section \ref{sec:interlacing}, we develop the interlacing theorem which is a signed version of Weyl-like inequalities proposed in \cite{DPT21}. 
Finally, we show lower bound estimates for the number of strong nodal domains in 
Section \ref{sec:lower-nodal}.

\section{Preliminaries}
\label{sec:pre}
To explain the interesting  story clearly, let us present our setting and notations in this section.    

Let $G=(V,E)$ be a finite graph with a positive edge measure $w:E\to \R^+$, a vertex weight $\mu:V=\{1,2\cdots,n\}\to\R^+$ and a real potential function  $\kappa:V\to\R$. In this paper, we work on a  signed graph $\Gamma=(G,\sigma)$ with an additional  signature $\sigma:E\to \{-1,1\}$. We use $C(V)$ to denote the set of all the real functions on $V$, and we always identify $C(V)$ with $\R^n$, i.e.,  $C(V)\cong \R^n$. We  denote $w(\{x,y\})$, $\kappa(x)$, $\mu(x)$ and $\sigma(\{{xy}\})$ by $w_{xy}$,  $\kappa_x$,  $\mu_x$ and $\sigma_{xy}$ for simplicity.

In this paper, we assume $p\geq 1$. Let $\Phi_p:\R\to \R$ be  defined as $\Phi_p(t)=|t|^{p-2}t$ if $t\neq 0$ and  $\Phi_p(t)=0$ if $t=0$. We also write $x\sim y$ when $\{x,y\}\in E$.

For $p>1$, the signed  $p$-Laplacian  $\Delta_p^\sigma:C(V)\to C(V)$ is defined \cite{Amghibech,DPT21} by
$$\Delta_p^\sigma f(x)=\sum_{y\sim x}w_{xy}\Phi_p(f(x)-\sigma_{xy}f(y))+\kappa_x\Phi_p(f(x)),\;\; x\in V,\;f\in C(V).$$

A nonzero function $f:V\to \R$ is an eigenfunction of $\Delta_p^{\sigma}$ associated with the eigenvalue $\lambda$ if the following identity holds
$$\Delta_p^\sigma f(x)=\lambda\mu_x\Phi_p(f(x)),\;\;\forall x\in V.$$

The signed  $1$-Laplacian $\Delta_1^\sigma$ \cite{Chang, CSZ16+} is a set-valued map  defined by
$$\Delta_1^\sigma f(x)=\left\{\left.\sum_{y\sim x}w_{xy}z_{xy}+\kappa_xz_x\right|z_{xy}\in \mathrm{Sgn}(f(x)-\sigma_{xy}f(y)),z_{xy}=-\sigma_{xy}z_{yx},z_x\in\mathrm{Sgn}(f(x)) \right\},$$
in which $$\mathrm{Sgn}(t):=\begin{cases}
 \{1\} & \text{if } t>0,\\
 [-1,1] & \text{if }t=0,\\
 \{-1\} & \text{if }t<0.
 \end{cases}
$$
In this paper, we always use $\mathrm{Sgn}$ to denote the above set-valued sign function. And we use $\mathrm{sgn}$ to denote the usual sign function as follows
$$\mathrm{sgn}(t):=\begin{cases}
 1 & \text{if } t>0,\\
 0 & \text{if }t=0,\\
 1\ & \text{if }t<0.
 \end{cases}
$$

For a nonzero function $f:V\to \mathbb{R}$, we say that it is an eigenfunction of $\Delta_1^{\sigma}$ corresponding to an eigenvalue $\lambda\in \R$ if $$\Delta_1^\sigma f(x)\bigcap \lambda \mu_x \mathrm{Sgn}(f(x))\ne\emptyset,\;\;\forall x \in V,$$ or equivalently, the differential inclusion
$$0\in \Delta_1^\sigma f(x)- \lambda \mu_x \mathrm{Sgn}(f(x)),\;\;\forall x \in V$$
holds in the language of Minkowski sum of convex sets. 

We will also discuss eigenfunctions with \emph{minimal supports} (see Theorem \ref{thm:min nodal domain} in the next section).  
 \begin{defn}
     For any function $g:V\to \mathbb{R}$, define $\mathrm{supp}(g):=\{x\in V:g(x)\neq 0\}$. Let $f$ be an eigenfunction of $\Delta_p^{\sigma}$ corresponding to $\lambda$. We say $f$ has minimal support if for any eigenfunction $g$ of $\Delta_p^{\sigma}$ corresponding to $\lambda$ with $\mathrm{supp}(g)\subset\mathrm{supp}(f)$, we must have $\mathrm{supp}(g)=\mathrm{supp}(f)$.
 \end{defn}

\begin{defn}[Switching]
   A function $\tau$ is called a switching function if it maps from $V$ to $\{+1,-1\}$. Switching the signature of $\Gamma=(G,\sigma)$ by $\tau$ refers to the operation of changing $\sigma$ to be $\sigma^\tau $ where
	\[\sigma^{\tau}_{xy}:=\tau(x)\sigma_{xy}\tau(y)\]
	for any $\{x,y\}\in E$.
\end{defn}

\begin{defn}
	 Two signed graphs $\Gamma=(G,\sigma)$ and $\Gamma'=(G.\sigma')$ are called to be switching equivalent if there exists a switching function $\tau$ such that $\sigma'=\sigma^\tau$.
\end{defn}
Next, we define balanced and antibalanced graphs. The definition given below is equivalent to the original one by Harary \cite{Harary53} due to Zaslavsky's switching lemma
\cite{Zaslavsky82}.

\begin{defn}
    A balanced (resp., antibalanced) graph is a signed graph which is switching equivalent to a graph whose edges are all positive (resp., negative). 
\end{defn}

\begin{remark}
For $\kappa=0$, $\Delta_p^\sigma$ is the usual $p$-Laplacian on  signed graphs.

For $\sigma\equiv +1$, $\Delta_p^\sigma$ is nothing but the usual $p$-Schr\"odinger operator on  graphs. It is known that the graph $p$-Schr\"odinger eigenvalue problem covers the Dirichlet $p$-Laplacian eigenvalue problem on graphs, see, e.g., \cite{HW20}. 
 
For $p=2$, $\Delta_2^\sigma$ reduces to an arbitrary symmetric matrix by  taking certain parameters $w$, $\sigma$, $\mu$ and $\kappa$. 
 \end{remark}
Before giving the following definition, we recall that a set $S$ in a Banach space is centrally symmetric if $S=-S$ where $-S:=\{-x:\,x\in S\}$. 
\begin{defn}[index]
The \emph{index} (or Krasnoselskii 
genus) of a compact  centrally symmetric set $S$ in a Banach space  is defined by 
\begin{equation*}
\gamma(S):=\begin{cases}
0 &\text{if}\;S=\emptyset \\
\min\limits\{k\in\mathbb{Z}^+: \exists\; \text{odd continuous}\; h: S
\to \mathbb{R}^{k}\} & \text{if}\; S\neq\emptyset.
\end{cases}
\end{equation*}
If, in the above, $\{k\in\mathbb{Z}^+:\exists\; \text{odd continuous}\; h: S\to \mathbb{R}^{k}\}=\emptyset$,  we set \[\min\limits\{k\in\mathbb{Z}^+: \exists\; \text{odd continuous}\; h: S\to \mathbb{R}^{k}\}=\infty.\]
\end{defn}
The following proposition can be found in \cite[Proposition 5.2]{Struwe}.
\begin{pro}\label{index}
    For any bounded centrally symmetric neighborhood $\Omega$ of the origin in $\mathbb{R}^m$, we have $\gamma(\partial \Omega)=m$.
\end{pro} 

Let $\mathcal{S}_p(V)=\{f\in C(V):\sum_{x\in V}\mu_x|f(x)|^{p}=1\}$, and let $$\mathcal{F}_{k}\left(\mathcal{S}_p(V)\right)=\{A\subset \mathcal{S}_p(V):A\text{ is compact  centrally symmetric and }\gamma(A)\ge k\}.$$ For convenience, we omit the symbol $V$ if no confusion arises, e.g. $S_p:=S_p(V),\,\mathcal{F}_{k}(S_p):=\mathcal{F}_{k}(S_p(V))$. Denote by  $$\RQ_p^\sigma(f)=\frac{\sum_{\{x,y\}\in E}w_{xy}|f(x)-\sigma_{xy}f(y)|^{p}+\kappa_x|f(x)|^p}{\sum_{x\in V}\mu_x|f(x)|^p}$$
the $p$-Rayleigh quotient. 
The Lusternik-Schnirelman theory
allows us to define a sequence of variational  eigenvalues  of $\Delta_p^\sigma$:
$$ \lambda_k(\Delta_p^\sigma):=\inf_{S\in \mathcal{F}_{k}( \mathcal{S}_p)}\sup\limits_{f\in S}  \RQ_p^\sigma(f),\;\;k=1,2,\ldots,|V|.$$
Moreover, each variational eigenvalue is an eigenvalue of $\Delta_p^{\sigma}$.

It is worth noting that there does exist graphs with non-variational eigenvalues, see \cite[Theorem 6]{Amghibech}. It is proved in \cite[Theorem 3.7]{DPT21} that forests admit
only variational eigenvalues. 
\begin{defn}[eigenspace]
The {\sl eigenspace}   $\mathsf{X}_{\lambda}(\Delta_p^\sigma)$ of $\Delta_p^{\sigma}$ corresponding to an eigenvalue $\lambda$ is the subset of $\mathcal{S}_p$ consists of the all eigenfunctions corresponding to  $\lambda$. 
\end{defn}

The {\sl multiplicity} of an eigenvalue $\lambda$ of $\Delta_p^\sigma$ is defined to be $\gamma(\mathsf{X}_{\lambda}(\Delta_p^\sigma))$, and we shall denote it by $\mathrm{multi}(\lambda(\Delta_p^\sigma))$.
In this paper, we write $\lambda_k$ to denote $\lambda_k(\Delta_p^{\sigma})$, if it is clear.

\begin{defn}[variational multiplicity]
For a variational eigenvalue $\lambda$ of $\Delta_p^\sigma$, its {\sl variational multiplicity} is defined as the number of times $\lambda$ appears in the sequence of variational eigenvalues. We will  denote it by $\mathrm{multi}_v(\lambda(\Delta_p^\sigma))$.
\end{defn}
It is known that for any variational eigenvalue, its variational multiplicity is always less than or equal to its multiplicity \cite[Lemma 5.6]{Struwe}.  

\begin{defn}[nodal domains, Definitions 3.1--3.4 in \cite{GL21+}]\label{def:nodal}
	Let $\Gamma=(G,\sigma)$ be a signed graph  and $f: V\to \mathbb{R}$ be a function. A sequence $\{x_i\}_{i=1}^k$  of vertices is called a \emph{strong nodal domain walk} of $f$ if $x_i\sim x_{i+1}$ and $f(x_i)\sigma_{x_ix_{i+1}} f(x_{i+1})>0$ for each $i=1,2,\ldots,k-1$.
	
A sequence $\{x_i\}_{i=1}^k$, $k\geq 2$ of vertices is called a \emph{weak nodal domain walk} of $f$ if for any two consecutive non-zeros $x_i$ and $x_j$ of $f$, i.e., $f(x_i)\neq 0$, $f(x_j)\neq 0$, and $f(x_{\ell})=0$ for any $i<\ell<j$, it holds that
\[f(x_{i}) \sigma_{x_{i}x_{i+1}}\cdots \sigma_{x_{j-1}x_{j}}f(x_j)>0.\]
We remark that every walk containing at most 1 non-zeros of f is a weak nodal domain walk.

Let $\Omega=\{ x\in V:f(x)\neq 0 \}$ be the set of non-zeros of $f$.
\begin{itemize}
  \item [(i)] Define an equivalence  relation $\mathop{\sim}\limits^S$ on $\Omega$ as follows: For any $x,y\in \Omega$, $x  \mathop{\sim}\limits^S y$ if and only if $x=y$ or there exists a strong nodal domain walk connecting $x$ and $y$. 
  
  We denote by $\{S_i\}_{i=1}^{n_S}$ the equivalence classes of the relation $\mathop{\sim}\limits^S$ on $\Omega$. We call the induced subgraph of each $S_i$ a \emph{strong nodal domain} of the function $f$.
 We denote the number $n_S$ of strong nodal domains of $f$ by $\mathfrak{S}(f)$.

  \item [(ii)] Define an  equivalence  relation $\mathop{\sim}\limits^W$ on $\Omega$ as follows: For any $x,y\in \Omega$, $x\mathop{\sim}\limits^W y$ if and only if $x=y$ or there exists a weak nodal domain walk connecting $x$ and $y$.
  
  We denote by $\{W_i\}_{i=1}^{n_W}$ the equivalence classes of the relation $\mathop{\sim}\limits^W$ on $\Omega$. We call the induced subgraph of each set \[W_i^0:= W_i\cup\{v\in V: \text{there exists  a weak nodal domain walk from $v$ to some vertex in $W_i$}\}\] a \emph{weak nodal domain} of the function $f$. We denote the number $n_W$ of weak nodal domains of $f$ by $\mathfrak{W}(f)$.
\end{itemize}
\end{defn}
Note that $\{W_i\}_{i=1}^{n_W}$ is a partition of $\Omega:=\{x\in V:f(x)\neq 0\}$. And $W_i^0$ is obtained by adding some zeros to $W_i$.

Next, we give two examples to illustrate this definition.
\begin{example}\label{ex:triangle}
	We consider the signed graph $\Gamma=(G,\sigma)$ which is shown in Figure \ref{fig:product_scheme}. The corresponding signed Laplacian is given as below
 \[\Delta_2^\sigma=\left(
\begin{array}{cccccc}
	 1 & 0& 1 & 0 & 0    \\
      0 & 1 &1 &  0 & 0   \\
      1 &  1 & 4 &  1 & 1  \\
      0 &  0 & 1 &  1 & 1  \\
	 0 &  0 & 1 &  1 & 1  \\
\end{array}
\right).\]

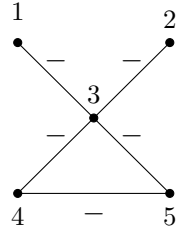
\begin{figure}[!htp]
	\centering
	\tikzset{vertex/.style={circle, draw, fill=black!20, inner sep=0pt, minimum width=3pt}}
	\begin{tikzpicture}[scale=1.0]
 	 \draw (-1,1) -- (0,0) node[midway, above, black]{$-$}
		-- (1,1) node[midway, above, black]{$-$};
		\draw (0,0) -- (-1,-1) node[midway,above, black]{$-$}
		-- (1,-1) node[midway,below, black]{$-$}
		-- (0,0) node[midway, above, black]{$-$} ;

		\node at (0,0) [vertex, label={[label distance=0mm]90: \small $3$}, fill=black] {};
           \node at (-1,-1) [vertex, label={[label distance=0mm]270: \small $4$} ,fill=black] {};
		\node at (1,-1) [vertex, label={[label distance=0mm]270: \small $5$} ,fill=black] {};
		\node at (1,1) [vertex, label={[label distance=0mm]90: \small $2$} ,fill=black] {};
	\node at (-1,1) [vertex, label={[label distance=1mm]90: \small $1$} ,fill=black] {};
		
	\end{tikzpicture}
	\caption{$\Gamma=(G,\sigma)$.}
	
	\label{fig:product_scheme}
\end{figure}
By numerical computation, we have the eigenvalues of $\Delta_2^\sigma$ 
\[\lambda_{1}=0 \leq \lambda_{2}\approx0.238\leq \lambda_{3}=1\leq \lambda_{4}\approx1.637\leq\lambda_{5}\approx5.125,\]
and the corresponding 
eigenfunctions
\begin{align*}
f_1&=(0,0,0,-1,1)^T,\\
f_2&\approx(2.313,2.313,-1.762,1,1)^T,\\
f_3&=(-1,1,0,0,0,0)^T,\\
f_4&\approx(-0.517,-0.517,-0.363,1,1)^T,\\
f_5&\approx(0.758,0.758,3.125,1,1)^T.
\end{align*}
In Table \ref{t}, we list the strong and weak nodal domains of each eigenfunction. Notice that we only provide vertex subsets. The strong and weak nodal domains are the induced subgraphs of those vertex subsets.
\begin{table}[!ht]
	\begin{center}
		\begin{tabular}{c|c|c} 
			\textbf{Eigenfunction} & \textbf{Strong nadal domain} & \textbf{Weak nodal domain}\\
		
			\hline
			$f_1$ & $\{4,5\}$& $\{1,2,3,4,5\}$\\
			$f_2$ & $\{1,2,3,4,5\}$& $\{1,2,3,4,5\}$\\
			$f_3$ & $\{1\},\{2\}$ & $\{1,2,3,4,5\}$\\
			$f_4$&$\{1\}\{2\},\{3,4,5\}$&$\{1\}\{2\},\{3,4,5\}$\\
			$f_5$&$\{1\},\{2\},\{3\},\{4\},\{5\}$&$\{1\},\{2\},\{3\},\{4\},\{5\}$\\
		\end{tabular}
\caption{Strong and weak nodal domains}\label{t}
	\end{center}
\end{table}

It is worth noting that for the eigenfunction $f_3$, vertices $1$ and $2$ lie in the same weak nodal domain because $1\to 3\to 4\to 5\to 3\to 2$ is a weak nodal domain walk.


		
\end{example}
\begin{example}
     We consider a signed star graph $\Gamma=(G,\sigma)$ depicted in Figure \ref{fig:star} and its signed Laplacian matrix: 
     \[\Delta_2^\sigma=\left(
\begin{array}{cccccc}
	 4 & 1& 1 & 1 & 1    \\
      1 & 1 &0 &  0 & 0   \\
      1 &  0 & 1 &  0 & 0  \\
      1 &  0 & 0 &  1 & 0  \\
	 1 &  0 & 0 &  0 & 1  \\
\end{array}
\right).\]
\begin{figure}[!htp]
	\centering
	\tikzset{vertex/.style={circle, draw, fill=black!20, inner sep=0pt, minimum width=3pt}}
	\begin{tikzpicture}[scale=1.0]
 	 \draw (-1.5,0) -- (0,0) node[midway, above, black]{$-$}
		-- (0,1.5) node[midway, right, black]{$-$};
		\draw (1.5,0) -- (0,0) node[midway,above, black]{$-$}
		-- (0,-1.5) node[midway, right, black]{$-$} ;

		\node at (0,0) [vertex, label={[label distance=0mm]45: \small $1$}, fill=black] {};
           \node at (-1.5,0) [vertex, label={[label distance=0mm]270: \small $4$} ,fill=black] {};
		\node at (1.5,0) [vertex, label={[label distance=0mm]270: \small $2$} ,fill=black] {};
		\node at (0,1.5) [vertex, label={[label distance=0mm]90: \small $3$} ,fill=black] {};
	\node at (0,-1.5) [vertex, label={[label distance=1mm]0: \small $5$} ,fill=black] {};
		
	\end{tikzpicture}
	\caption{The signed star graph.}\label{fig:2}	
	\label{fig:star}
\end{figure}
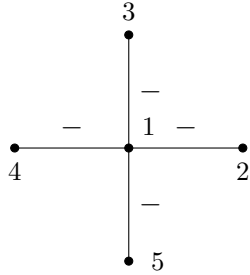

The eigenvalues of $M$ are $\lambda_1=0<\lambda_2=\lambda_3=\lambda_4=1<\lambda_5=5$. We consider the eigenfunction $f=(0,1,1,-1,-1)$ corresponding to $\lambda_2$. It is direct to check that there are $4$ strong nodal domains of $f$. Next, we investigate the weak nodal domains. Observe that $3\to 1\to 2$ and $4\to 1\to 5$ are both weak nodal domain walks of  $f$. And there are no weak nodal domain walks between $\{2,3\}$ and $\{4,5\}$. Using the notation of Definition \ref{def:nodal}, we have $W_1=\{2,3\}$ and $W_2=\{4,5\}$. Furthermore, we have $W_1^0=\{1,2,3\}$ and $W_2^0=\{1,4,5\}$. That is, $f$ has two weak nodal domains.

		
\end{example}
Now, we recall two propositions from \cite[Propostions 3.16 and 3.17]{GL21+} which will be useful later in the proof of Theorem \ref{thm:weak nodal domain}. 
\begin{pro}\label{pro:connected}
Let $\{D_i\}_{i=1}^q$ be the all weak nodal domains of a non-zero function $f$ on a
signed graph $\Gamma=(G,\sigma)$. Let $G_D = (V_D, E_D)$ be the graph given by
$$V_D:=\{D_i\}_{i=1}^q,\,\text{and\,}\,E_D:=\big\{\{{D_i,D_j\}:D_i\sim D_j}\big\},$$
where $D_i\sim D_j$ means that there exist $x\in D_i$ and $y\in D_j$ such that $x\sim y$. Then, if the graph $G$ is connected, so does the graph $G_D$.
\end{pro}
\begin{pro}\label{pro:cap}
    Let $f$ be a non-zero function on a signed graph $\Gamma=(G,\sigma)$. Then for any three weak nodal domain $D_1,D_2,D_3$ of $f$, we have $D_1\cap D_2\cap D_3=\emptyset$.
\end{pro}
\begin{remark}\label{remark:2.2}
    Another way to study the discrete nodal domains is to consider the edges instead of vertices. Given a function $f$,  define two edge sets $E^+=\{\{x,y\}\in E:f(x)\sigma_{xy}f(y)>0 \}$, $E^-=\{\{x,y\}\in E:f(x)\sigma_{xy}f(y)<0 \}$ and a vertex set $V_0=\{x\in V:f(x)\neq 0\}$. Then the number of strong nodal domains of a function $f$ is equal to the number of connected components of the graph $\Gamma'=(G',\sigma')$ where $G'=(V_0,E^+)$ and $\sigma'_{xy}=\sigma_{xy}$ for any $\{x,y\}\in E^+$. Mohammadian \cite{Ali16} proved the upper bound of the signed strong nodal domains by considering the graph $\Gamma'$. When $f$ is a generic eigenfunction, i.e., $f$ is simple and  non-zero on every vertex, the set $E^-$ is regarded as the nodal set of $f$, and the cardinality of $E^-$ is called the nodal count of $f$. The properties of nodal count have been studied in, e.g., \cite{Berkolaiko08,Berkolaiko13,CdV13,AG22}. The nodal count of signed Laplacian plays an important role in the extension of the Nodal Universality Conjecture from quantum graphs \cite{ABB22} to discrete graphs \cite{AG22}.
\end{remark}
We use $\overline{\mathfrak{S}}(f)$ (resp., $\overline{\mathfrak{W}}(f)$)  to denote the number of strong (resp., weak) nodal domains of $f$ with respect to $(G,-\sigma)$. 

The perturbation theory plays an important role in  studying of the properties of linear operators \cite{Kato76}. The following proposition is about the 
 perturbation theory of eigenvalues of $p$-Laplacian. To state the proposition, we first recall the definition of upper hemi-continuity of set-valued maps.
 \begin{defn}
     Let $X$ and $Y$ be metric spaces. A set-valued map $F:X\to \mathcal{P}(Y)$, where $\mathcal{P}(Y)$ stands for the collection of all subsets of $Y$, is called upper hemi-continuous at $x\in X$ if for any neighborhood $U$ of $F(x)$ in $Y$, there exists $\eta >0$, such that for any $x'\in B_X(x,\eta):=\{z\in X:d_X(x,z)<\eta\}$ where $d_X$ is the metric of $X$, we have $F(x')\subset U$. It is said to be upper hemi-continuous if it is upper hemi-continuous at any point of $X$.
 \end{defn}

 \begin{pro}\label{pro:continuity}
 The $k$-th variational eigenvalue is continuous  with respect to   $$(w,\kappa,\mu)\in(0,+\infty)^E\times\R^V\times (0,+\infty)^V.$$ Moreover, the multiplicity and variational multiplicity of the $k$-th variational eigenvalue are both upper semi-continuous with respect to $(w,\kappa,\mu)$  and the corresponding eigenspace is upper hemi-continuous with respect to $(w,\kappa,\mu)$.  In particular, the set of the parameters $(w,\kappa,\mu)$ such that $\lambda_k(\Delta_p^\sigma)$ has multiplicity $1$ is open in $(0,+\infty)^E\times\R^V\times (0,+\infty)^V$. Similarly, the set of the parameters $(w,\kappa,\mu)$ such that the variational multiplicity of $\lambda_k(\Delta_p^\sigma)$ is $1$ is also open in $(0,+\infty)^E\times\R^V\times (0,+\infty)^V$.
 \end{pro}

 \begin{proof}
  Since $\mathcal{R}_p^\sigma$ is locally Lipschitz continuous  with respect to   \[(w,\kappa,\mu,f)\in(0,+\infty)^E\times\R^V\times (0,+\infty)^V\times(\R^V\setminus\{\vec0\}),\] it is easy to show that 
the $k$-th variational eigenvalue $$\lambda_k=\inf_{S\in\mathcal{F}_{k}(\mathcal{S}_p)}\sup\limits_{f\in S}\mathcal{R}_p^\sigma(f)=\min_{S\in\mathcal{F}_{k}(\mathcal{S}_p)}\max\limits_{f\in S}\mathcal{R}_p^\sigma(f)$$ is continuous  with respect to   $(w,\kappa,\mu)$. 

First, we prove the upper semi-continuity of the variational multiplicity. Let $r$ be the variational multiplicity of $\lambda_k(\Delta_p^\sigma[w_0,\kappa_0,\mu_0])$, where $(w_0,\kappa_0,\mu_0)\in (0,+\infty)^E\times\R^V\times (0,+\infty)^V$. 
Without loss of generality, we assume that
\[\lambda_{k-1}(\Delta_p^\sigma[w_0,\kappa_0,\mu_0])<\lambda_k(\Delta_p^\sigma[w_0,\kappa_0,\mu_0])\,\,\text{and}\,\,\lambda_{k+r-1}(\Delta_p^\sigma[w_0,\kappa_0,\mu_0])<\lambda_{k+r}(\Delta_p^\sigma[w_0,\kappa_0,\mu_0]).\]
By continuity, the above two inequalities hold in an open neighborhood $U$ of $(w_0,\kappa_0,\mu_0)$. Therefore, the variational multiplicity of $\lambda_k(\Delta_p^\sigma[w,\kappa,\mu])$ with $(w,\kappa,\mu)\in U$ is equal to or less than $r$. This proves the upper semi-continuity of the variational multiplicity.

Next, we prove the upper semi-continuity of the multiplicity. Let $X_k(w,\kappa,\mu)\subset\mathcal{S}_p$ be the collection of all normalized eigenfunctions corresponding to the $k$-th variational eigenvalue of $\Delta_p^\sigma$ with the parameter $(w,\kappa,\mu)$. We first verify  that  the eigenspace $X_k(w,\kappa,\mu)$ is upper hemi-continuous with respect to $(w,\kappa,\mu)$. Suppose the contrary, that there exists $\epsilon_0>0$ such that there exists a sequence $(\omega^i,\kappa^i,\mu^i)$ converges  to $(\omega,\kappa,\mu)$ as  $i\to+\infty$, but  $X_k(\omega^i,\kappa^i,\mu^i)$ is not included in the $\epsilon_0$-neighbourhood $\mathbb{B}_{\epsilon_0}(X_k(\omega,\kappa,\mu))$ of $X_k(\omega,\kappa,\mu)$. 
That is, we can take $f^i\in X_k(\omega^i,\kappa^i,\mu^i)\setminus \mathbb{B}_{\epsilon_0}(X_k(\omega,\kappa,\mu))$, for any $i\ge1$. 
Since $f^i\in\mathcal{S}_p$, by the compactness, there exists a subsequence, still denoted  by  $\{f^i\}_{i\ge 1}$, converging to a limit $f\in\mathcal{S}_p\setminus \mathbb{B}_{\epsilon_0}(X_k(\omega,\kappa,\mu))$. Since $f^i$ is an eigenfunction corresponding to the $k$-th variational eigenvalue $\lambda_k(\Delta_p^\sigma[\omega^i,\kappa^i,\mu^i])$ of  $\Delta_p^\sigma[\omega^i,\kappa^i,\mu^i]$, we have the  eigen-equation  
\[\Delta_p^\sigma[\omega^i,\kappa^i,\mu^i] f^i=\lambda_k(\Delta_p^\sigma[\omega^i,\kappa^i,\mu^i])\cdot \mu^i\Phi_p(f^i).\]  
Taking $i\to+\infty$,  we have $\Delta_p^\sigma[\omega,\kappa,\mu] f=\lambda_k(\Delta_p^\sigma[\omega,\kappa,\mu])\cdot \mu\Phi_p(f)$, which  implies that $f$ is an eigenfunction corresponding to $\lambda_k(\Delta_p^\sigma[\omega,\kappa,\mu])$. Thus, $f\in X_k(\omega,\kappa,\mu)$, which contradicts to the fact $f\in\mathcal{S}_p\setminus \mathbb{B}_{\epsilon_0}(X_k(\omega,\kappa,\mu))$. 

By the monotonicity and continuity of the index function  $\gamma$ (\cite[Proposition 5.4]{Struwe}), we have
\begin{align*}
\mathrm{multi}(\lambda_k(\Delta_p^\sigma[w',\kappa',\mu']))&=\gamma(X_k(w',\kappa',\mu'))\le \gamma(\mathbb{B}_\epsilon(X_k(w,\kappa,\mu)))= \gamma(X_k(w,\kappa,\mu))\\&=\mathrm{multi}(\lambda_k(\Delta_p^\sigma[w,\kappa,\mu])),
\end{align*}
 where $\mathbb{B}_\epsilon(X_k(w,\kappa,\mu))$ represents the $\epsilon$-neighborhood of  $X_k(w,\kappa,\mu)$, and $\mathrm{multi}(\lambda_k(\Delta_p^\sigma[w,\kappa,\mu]))$ indicates the multiplicity of the $k$-th variational eigenvalue of $\Delta_p^\sigma$ with the parameter $(w,\kappa,\mu)$. This implies that  $\{(w,\kappa,\mu):\mathrm{multi}(\lambda_k(\Delta_p^\sigma[w,\kappa,\mu]))\le C\}$ is an open subset of $(0,+\infty)^E\times\R^V\times (0,+\infty)^V$, for any constant $C\ge1$. Hence the multiplicity  of the $k$-th variational eigenvalue is upper semi-continuous with respect to $(w,\kappa,\mu)$.
 \end{proof}
 In the linear case, we know that if $(G,\sigma)$ and $(G,\tilde{\sigma})$ are switching equivalent with the same edge measure, vertex weight and  potential function, then the spectrum of $\Delta_2^{\sigma}$ coincides with that of $\Delta_2^{\tilde{\sigma}}$. The following proposition shows this fact still holds for the nonlinear case.
\begin{pro}\label{pro:switching-spectra}
Let $(G,\sigma)$ and $(G,\tilde{\sigma})$ be two signed graphs with the same edge measure, vertex weight and  potential function. If $\tilde{\sigma}$ is  switching equivalent to $\sigma$, then the spectrum of $\Delta_p^{\tilde{\sigma}}$  coincides with the spectrum of $\Delta_p^\sigma$. Moreover, the variational spectra of $\Delta_p^{\tilde{\sigma}}$ and $\Delta_p^\sigma$ are the same.
\end{pro}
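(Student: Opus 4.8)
The plan is to implement the switching by an explicit diagonal sign change on function space and verify that it intertwines the two operators and preserves the $p$-Rayleigh quotient. Fix a switching function $\tau:V\to\{+1,-1\}$ with $\tilde\sigma=\sigma^\tau$, and let $T_\tau:C(V)\to C(V)$ be the linear involution $(T_\tau f)(x)=\tau(x)f(x)$. First I would record the elementary identities used throughout: for every edge $\{x,y\}\in E$,
$$(T_\tau f)(x)-\tilde\sigma_{xy}(T_\tau f)(y)=\tau(x)\bigl(f(x)-\sigma_{xy}f(y)\bigr),$$
because $\tilde\sigma_{xy}\tau(y)=\tau(x)\sigma_{xy}$ and $\tau(y)^2=1$; moreover $|\tau(x)|=1$, and both $\Phi_p$ and $\mathrm{Sgn}$ are odd, so $\Phi_p(\tau(x)t)=\tau(x)\Phi_p(t)$ and $\mathrm{Sgn}(\tau(x)t)=\tau(x)\mathrm{Sgn}(t)$ for $\tau(x)=\pm1$.

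Second, I would transfer the eigenvalue problems. For $p>1$, the identities above give $\Delta_p^{\tilde\sigma}(T_\tau f)(x)=\tau(x)\,\Delta_p^\sigma f(x)$ and $\mu_x|(T_\tau f)(x)|^{p-2}(T_\tau f)(x)=\tau(x)\mu_x|f(x)|^{p-2}f(x)$ for all $x\in V$, so $(f,\lambda)$ solves the eigenvalue problem for $\Delta_p^\sigma$ if and only if $(T_\tau f,\lambda)$ solves it for $\Delta_p^{\tilde\sigma}$. For $p=1$ one checks that the assignment $z_{xy}\mapsto\tau(x)z_{xy}$, $z_x\mapsto\tau(x)z_x$ carries the set defining $\Delta_1^\sigma f(x)$ bijectively onto the set defining $\Delta_1^{\tilde\sigma}(T_\tau f)(x)$ — one uses $\mathrm{Sgn}(\tau(x)t)=\tau(x)\mathrm{Sgn}(t)$ and the fact that the skew-symmetry constraint $z_{xy}=-\sigma_{xy}z_{yx}$ becomes $\tau(x)z_{xy}=-\tilde\sigma_{xy}\tau(y)z_{yx}$ since $\tilde\sigma_{xy}\tau(x)\tau(y)=\sigma_{xy}$ — hence $\Delta_1^{\tilde\sigma}(T_\tau f)(x)=\tau(x)\Delta_1^\sigma f(x)$ as subsets of $\R$. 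Intersecting with $\lambda\mu_x\mathrm{Sgn}((T_\tau f)(x))=\tau(x)\lambda\mu_x\mathrm{Sgn}(f(x))$ and using that scaling by $\pm1$ commutes with intersection, the two intersection conditions are equivalent. Since $T_\tau$ is a bijection, the spectra of $\Delta_p^{\tilde\sigma}$ and $\Delta_p^\sigma$ coincide, and since $T_\tau$ restricts to an odd homeomorphism of each eigenspace, the corresponding multiplicities $\gamma(\mathsf X_\lambda)$ agree as well.

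Third, for the variational spectra I would observe that $T_\tau$ is an $\ell^p_\mu$-isometry, in fact $\|T_\tau f\|_p=\|f\|_p$ termwise, so it restricts to an odd homeomorphism of $\mathcal{S}_p$ onto itself, and the identities of the first step give $\RQ_p^{\tilde\sigma}(T_\tau f)=\RQ_p^\sigma(f)$ for every $f\neq\vec0$ (the denominator and the $\kappa$-term are unchanged because $|\tau(x)|=1$, the edge terms because $|(T_\tau f)(x)-\tilde\sigma_{xy}(T_\tau f)(y)|=|f(x)-\sigma_{xy}f(y)|$). An odd homeomorphism preserves compactness, central symmetry, and the Krasnoselskii genus, so $A\mapsto T_\tau(A)$ is a bijection of $\mathcal{F}_k(\mathcal{S}_p)$ onto itself; substituting into $\lambda_k(\Delta_p^\sigma)=\inf_{A\in\mathcal{F}_k(\mathcal{S}_p)}\sup_{f\in A}\RQ_p^\sigma(f)$ yields $\lambda_k(\Delta_p^\sigma)=\lambda_k(\Delta_p^{\tilde\sigma})$ for all $k$, and matches the variational multiplicities.

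There is no serious obstacle: the entire statement is the remark that switching is realized by the diagonal map $T_\tau$, combined with the oddness and homogeneity of $\Phi_p$ and $\mathrm{Sgn}$. The only point requiring a little care is the bookkeeping in the case $p=1$, namely checking that the constraint $z_{xy}=-\sigma_{xy}z_{yx}$ is respected by $z_{xy}\mapsto\tau(x)z_{xy}$ under the new signature and that the sign rescaling commutes with the intersection in the eigen-inclusion.
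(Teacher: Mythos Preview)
Your proposal is correct and follows essentially the same approach as the paper: both implement switching via the diagonal sign-change map $f\mapsto\tau f$, verify that it carries eigenpairs of $\Delta_p^\sigma$ to eigenpairs of $\Delta_p^{\tilde\sigma}$, and then use that this map is an odd homeomorphism preserving $\RQ_p$ and the genus to match multiplicities and variational eigenvalues. Your write-up is in fact more detailed than the paper's---in particular, you explicitly treat the $p=1$ set-valued case and the skew-symmetry constraint on the $z_{xy}$, which the paper leaves implicit in the phrase ``it is not difficult to see.''
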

\begin{proof}
Suppose  $\tilde{\sigma}:=\sigma^\tau$ for some switching function $\tau:V\to\{-1,+1\}$. By direct computation, we derive that $(\lambda,f)$ is an eigenpair of $\Delta_p^\sigma$ if and only if $(\lambda,\tau f)$ is an eigenpair of $\Delta_p^{\sigma^\tau}$. Therefore, the set of eigenvalues of $\Delta_p^\sigma$ agrees with  the set of eigenvalues of  $\Delta_p^{\sigma^\tau}$. 

Note that for any centrally symmetric subset $X\subset \mathcal{S}_p$  of index $k$, $\tau\cdot X:=\{\tau f:f\in X\}$ is also a centrally symmetric subset of index $k$. For any eigenvalue $\lambda$ of $\Delta_p^{\sigma}$, it is clear that $\tau\cdot\mathsf{X}_{\lambda}(\Delta_p^\sigma)$ is nothing but the collection $\mathsf{X}_{\lambda}(\Delta_p^{\sigma^\tau})$ of the eigenfunctions corresponding to the eigenvalue $\lambda$ of $\Delta_p^{\sigma^\tau}$. Hence, the multiplicity of the eigenvalue $\lambda$ of $\Delta_p^{\sigma}$ coincides with the multiplicity of the eigenvalue $\lambda$ of  $\Delta_p^{\sigma^\tau}$. 
In summary, we obtain that the spectra of $\Delta_p^{\sigma}$ and $\Delta_p^{\sigma^\tau}$ coincide. 

Finally, we focus on the variational eigenvalues. It is direct  to check that $\gamma(A)=\gamma(\tau\cdot A)$ for any centrally symmetric subset $A$. And for any minimizing set $A$ with respect to $\lambda_k(\Delta_p^{\sigma^\tau})$, $\tau\cdot A$ is a minimizing set with  respect to $\lambda_k(\Delta_p^{\sigma})$. It then follows from the fact $\RQ_p^{\sigma^\tau}(f)=\RQ_p^{\sigma}(\tau f)$ that  $\lambda_k(\Delta_p^{\sigma^\tau})=\lambda_k(\Delta_p^{\sigma})$. 
\end{proof}

\section{Nodal domain theorems}
\label{sec:upper-nodal}
In this section, we prove nodal domain theorems for $p$-Laplacians on signed graphs and discuss several applications.
Let $\Gamma=(G,\sigma)$ be a signed graph with $G=(V, E)$, and let  
\begin{equation*}
    \lambda_1\leq \lambda_2\leq \ldots\leq\lambda_{|V|-1}\leq \lambda_{|V|} 
\end{equation*}
be the variational eigenvalues of $\Delta_p^{\sigma}$. For ease of notation, we denote $n=|V|$.

For any eigenfunction $f$ corresponding to $\lambda$,  we prove the following upper bounds for the quantities $\mathfrak{S}(f),\,\mathfrak{W}(f),\, \overline{\mathfrak{S}}(f) \text{ and }\overline{\mathfrak{W}}(f)$. 
\begin{theorem}
\label{thm:nodal-signed-graph}
  For $p\geq 1$, if $\lambda<\lambda_{k+1}$, then we have $\mathfrak{W}(f) \leq \mathfrak{S}(f)\le k.$ 
 \end{theorem}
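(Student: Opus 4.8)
The plan is to prove the two inequalities $\mathfrak{W}(f)\le\mathfrak{S}(f)$ and $\mathfrak{S}(f)\le k$ separately. The first inequality is essentially combinatorial: every weak nodal domain contains at least one strong nodal domain, because a strong nodal domain path is in particular a weak nodal domain path (restricting to consecutive nonzeros that happen to be adjacent), so the equivalence relation $\mathop{\sim}\limits^S$ refines $\mathop{\sim}\limits^W$ on $\Omega$; moreover distinct weak nodal domains $W_i^0, W_j^0$ cannot share a strong nodal domain since the vertices of a strong nodal domain are pairwise $\mathop{\sim}\limits^W$-equivalent. This gives an injection from the set of weak nodal domains to a subset of the strong nodal domains, hence $\mathfrak{W}(f)\le\mathfrak{S}(f)$. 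This part does not use the eigenvalue hypothesis at all.

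For the bound $\mathfrak{S}(f)\le k$, I would argue by contradiction: suppose $f$ is an eigenfunction for $\lambda<\lambda_{k+1}$ with strong nodal domains $S_1,\dots,S_m$ where $m\ge k+1$. For each $i$, let $f_i$ be the restriction of $f$ to $S_i$ (extended by zero), so the $f_i$ have pairwise disjoint supports and $f=\sum_i f_i$. The key point, exactly as in the classical graph $p$-Laplacian proofs of \cite{TudiscoHein18}, is that each $f_i$ is a "Dirichlet-type" test function on which the Rayleigh quotient does not exceed $\lambda$: since within $S_i$ all edge contributions $f(x)\sigma_{xy}f(y)>0$ one checks using the eigenequation that $\RQ_p^\sigma(f_i)\le\lambda$ (the cross-edges leaving $S_i$ only help, because $|f_i(x)-\sigma_{xy}f_i(y)|^p = |f(x)|^p \le |f(x)-\sigma_{xy}f(y)|^p$ when the product $f(x)\sigma_{xy}f(y)\le 0$). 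Then the span (or, for $p=1$, the symmetric convex-hull-type set) $A$ generated by $\{\pm f_i/\|f_i\|_p\}_{i=1}^{m}$ normalized to lie on $\mathcal{S}_p$ is a compact centrally symmetric set with $\gamma(A)\ge m\ge k+1$, because functions with disjoint supports behave like coordinate subspaces and the genus of such a set equals the number of generators; hence $A\in\mathcal{F}_{k+1}(\mathcal{S}_p)$. Finally $\sup_{g\in A}\RQ_p^\sigma(g)\le\lambda<\lambda_{k+1}$, contradicting the variational characterization $\lambda_{k+1}=\inf_{S\in\mathcal{F}_{k+1}(\mathcal{S}_p)}\sup_{g\in S}\RQ_p^\sigma(g)$.

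There are two technical points that need care. The first, and I expect the main obstacle, is verifying that $\sup_{g\in A}\RQ_p^\sigma(g)\le\lambda$ over the whole symmetric hull, not just the generators: for $p>1$ one wants a convexity/subadditivity estimate showing that $\RQ_p^\sigma$ on a function $g=\sum_i t_i f_i$ with disjoint supports is a weighted average (in the appropriate sense) of the $\RQ_p^\sigma(f_i)$, which follows from the fact that both numerator and denominator split as sums over the supports plus nonnegative cross terms in the numerator — so the numerator of $g$ is at most $\sum_i |t_i|^p(\text{stuff for }f_i) + (\text{cross})$ and one must bound the cross terms; the cleanest route is to observe $|g(x)-\sigma_{xy}g(y)|^p \le$ (convex combination bound) and reduce to $\RQ_p^\sigma(g)\le\max_i\RQ_p^\sigma(f_i)\le\lambda$. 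The second point is the genus computation $\gamma(A)=m$, which is standard (linear independence of disjointly supported functions plus monotonicity of $\gamma$), but for $p=1$ one should take $A$ to be the set of normalized convex combinations $\sum_i t_i \theta_i f_i$ with $\theta_i\in\{\pm1\}$, $t_i\ge 0$, $\sum t_i\|f_i\|_1=1$, which is a $\gamma$-value-$m$ set contained in $\mathcal{S}_1$; I would state the genus lower bound as a small lemma or cite the analogous fact from \cite{TudiscoHein18,GL21+}. Once these are in place the contradiction with Lusternik–Schnirelman minimax closes the argument uniformly in $p\ge 1$.
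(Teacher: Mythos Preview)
Your overall architecture is exactly the paper's: span the restrictions $f_i$, note the span has genus $m$, show $\sup_{g}\RQ_p^\sigma(g)\le\lambda$, and conclude $\lambda_m\le\lambda<\lambda_{k+1}$. The combinatorial inequality $\mathfrak{W}(f)\le\mathfrak{S}(f)$ is fine as you wrote it.

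The gap is in the step you yourself flag as the main obstacle. Your proposed route is the two-step chain
\[
\RQ_p^\sigma(g)\;\le\;\max_i\RQ_p^\sigma(f_i)\;\le\;\lambda,
\]
obtained by a ``convex combination bound'' on the numerator. The second inequality is true (and does follow from the eigen-equation applied to a single $f_i$), but the first inequality is \emph{false} in general: the Rayleigh quotient of a linear combination of disjointly supported pieces is \emph{not} dominated by the maximum of the pieces. On a path $1\!-\!2\!-\!3$ with unit weights and $p=2$, the top eigenfunction $f=(1,-2,1)$ for $\lambda=3$ has nodal-domain restrictions with $\RQ_2(f_1)=\RQ_2(f_3)=1$ and $\RQ_2(f_2)=2$, yet $\RQ_2(f)=3>\max_i\RQ_2(f_i)$. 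The reason is precisely the cross edges: for $x\in V_i$, $y\in V_j$ with $f(x)\sigma_{xy}f(y)\le 0$, one has $|t_if_i(x)-\sigma_{xy}t_jf_j(y)|^p\ge |t_i|^p|f_i(x)|^p+|t_j|^p|f_j(y)|^p$ (the signs add), so the numerator of $g$ is \emph{at least}, not at most, the weighted sum of the numerators of the $f_i$.

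What makes the proof work is that you must use the eigen-equation for the \emph{general} combination, not only for each $f_i$ separately. Testing $\Delta_p^\sigma f=\lambda\mu\Phi_p(f)$ against $f_i$ and summing with weights $|t_i|^p$ produces an exact expression for $\lambda\|g\|_p^p$ whose cross-edge terms are $(|t_i|^pf_i(x)-\sigma_{xy}|t_j|^pf_j(y))\,\Phi_p(f_i(x)-\sigma_{xy}f_j(y))$. These are compared to the cross terms $|t_if_i(x)-\sigma_{xy}t_jf_j(y)|^p$ in the numerator of $g$ via the elementary inequality (Lemma~\ref{lemma:elementary}): for $ab\ge 0$,
\[
|ta+sb|^p\le (|t|^pa+|s|^pb)|a+b|^{p-2}(a+b),
\]
applied with $a=f_i(x)$, $b=-\sigma_{xy}f_j(y)$. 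This yields $\RQ_p^\sigma(g)\le\lambda$ directly, bypassing the false intermediate bound. The same lemma (with the $\mathrm{Sgn}$ variant) covers $p=1$. Once you replace your ``convexity/subadditivity'' heuristic with this computation, your proof is complete and coincides with the paper's.
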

 \begin{theorem}\label{thm:nodal-anti-graph}
     For $p\geq 1$, if $\lambda>\lambda_{k}$, then we have $\overline{\mathfrak{W}}(f)\leq  \overline{\mathfrak{S}}(f)\le n-k.$      
 \end{theorem}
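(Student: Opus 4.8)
I will prove the two inequalities separately. The inequality $\overline{\mathfrak W}(f)\le\overline{\mathfrak S}(f)$ needs no work: with respect to any fixed signature, every strong nodal-domain path is in particular a weak nodal-domain path, so the weak equivalence classes of $f$ on $\supp f$ with respect to $(G,-\sigma)$ are unions of the strong ones; this is exactly the mechanism behind the corresponding inequality in Theorem~\ref{thm:nodal-signed-graph}. For the bound $\overline{\mathfrak S}(f)\le n-k$ I argue by contradiction: assume $\lambda>\lambda_k$ but $m:=\overline{\mathfrak S}(f)\ge n-k+1$. Let $\bar S_1,\dots,\bar S_m$ be the strong nodal domains of $f$ with respect to $(G,-\sigma)$; they partition $\supp f$, and I set $f_i:=f\cdot\mathbbm 1_{\bar S_i}$. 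The key feature, dual to the one exploited for Theorem~\ref{thm:nodal-signed-graph}, is that every edge inside a single $\bar S_i$ has $\sigma_{xy}f(x)f(y)<0$, while every edge joining two different $\bar S_i$'s and not touching the zero set has $\sigma_{xy}f(x)f(y)>0$. Everything will hinge on showing $\RQ_p^\sigma(g)\ge\lambda$ for \emph{every} $g$ in the $m$-dimensional space $U:=\operatorname{span}\{f_1,\dots,f_m\}$.

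\textbf{The core estimate on $U$.} First, testing the eigenequation $\Delta_p^\sigma f(x)=\lambda\mu_x|f(x)|^{p-2}f(x)$ against $f_i(x)$ at each $x\in\bar S_i$ (for $p=1$ one uses a selection $z_{xy},z_x$ realizing the differential inclusion) and symmetrizing over the edges inside $\bar S_i$ yields the identity
\[
\RQ_p^\sigma(f_i)-\lambda=\frac1{\|f_i\|_p^p}\sum_{x\in\bar S_i,\ y\notin\bar S_i}w_{xy}\bigl(|f(x)|^p-f(x)\Phi_p(f(x)-\sigma_{xy}f(y))\bigr),
\]
and each summand is $\ge0$: it vanishes when $f(y)=0$, and when $f(y)\ne0$ (a frustrated edge, $\sigma_{xy}f(x)f(y)>0$) it is nonnegative because $\Phi_p$ is nondecreasing and $f(x)-\sigma_{xy}f(y)$ has the same sign as $f(x)$ with $|f(x)-\sigma_{xy}f(y)|\le|f(x)|$. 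In particular $\RQ_p^\sigma(f_i)\ge\lambda$. Next, for $g=\sum_i s_i f_i$ I expand $N(g):=\sum_{\{x,y\}\in E}w_{xy}|g(x)-\sigma_{xy}g(y)|^p+\sum_x\kappa_x|g(x)|^p$ according to the four edge types (inside one $\bar S_i$; between two distinct $\bar S_i,\bar S_j$; from some $\bar S_i$ into the zero set; inside the zero set), combine with the identities above, and after the cancellations obtain
\[
N(g)-\lambda\sum_x\mu_x|g(x)|^p=\sum_{\{x,y\}\in E,\ x\in\bar S_i,\ y\in\bar S_j,\ i\ne j}w_{xy}\Bigl(|\tilde s_i\alpha-\tilde s_j\beta|^p-(|\tilde s_i|^p\alpha-|\tilde s_j|^p\beta)\,\Phi_p(\alpha-\beta)\Bigr),
\]
with $\alpha=|f(x)|$, $\beta=|f(y)|$, and $\tilde s_i=s_i/\|f_i\|_p$. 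Each summand is $\ge0$ by the elementary scalar inequality
\[
|\varepsilon_1 s\alpha-\varepsilon_2 t\beta|^p\ge(s^p\alpha-t^p\beta)\,\Phi_p(\alpha-\beta)\qquad(s,t\ge0,\ \alpha,\beta>0,\ \varepsilon_1,\varepsilon_2\in\{\pm1\},\ p\ge1),
\]
which, after reducing to $\varepsilon_1=\varepsilon_2$ (the other case follows since the left side only grows) and to $\alpha\ge\beta$ (by the swap symmetry and oddness of $\Phi_p$), amounts to $\phi(x):=(x-y)^p-(x^p-y)(1-y)^{p-1}\ge0$ for $x\ge y$, $y=\beta/\alpha\in(0,1]$; and indeed $\phi(1)=0$ while $\phi'(x)=p[(x-y)^{p-1}-(x(1-y))^{p-1}]$ has the sign of $x-1$. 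Hence $\RQ_p^\sigma(g)\ge\lambda$ on all of $U$.

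\textbf{Conclusion via the intersection property.} The set $B:=U\cap\mathcal S_p$ is compact and centrally symmetric, and radial projection onto the Euclidean unit sphere of $U$ is an odd homeomorphism onto $\mathbb S^{m-1}$, so $\gamma(B)=m$; moreover $\RQ_p^\sigma\ge\lambda$ on $B$. For any $S\in\mathcal F_k(\mathcal S_p)$ we have $\gamma(S)+\gamma(B)\ge k+m\ge k+(n-k+1)=n+1$, so the intersection property of the Krasnoselskii genus (a consequence of the Borsuk--Ulam theorem applied to centrally symmetric compact subsets of $\R^n\setminus\{\mathbf 0\}\supseteq\mathcal S_p$) forces $S\cap B\ne\varnothing$; choosing $g\in S\cap B$ gives $\sup_{h\in S}\RQ_p^\sigma(h)\ge\RQ_p^\sigma(g)\ge\lambda$. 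Since $S\in\mathcal F_k(\mathcal S_p)$ was arbitrary, $\lambda_k=\inf_{S\in\mathcal F_k(\mathcal S_p)}\sup_{h\in S}\RQ_p^\sigma(h)\ge\lambda$, contradicting $\lambda>\lambda_k$. Therefore $\overline{\mathfrak S}(f)=m\le n-k$.

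\textbf{Where the difficulty lies.} The delicate point is the core estimate, i.e.\ upgrading $\RQ_p^\sigma(f_i)\ge\lambda$ to $\RQ_p^\sigma\ge\lambda$ on all of $U$. The frustrated edges between distinct $-\sigma$-nodal domains contribute, in isolation, terms that would drag the Rayleigh quotient \emph{below} $\lambda$; it is only when these are paired with the nonnegative defects $\RQ_p^\sigma(f_i)-\lambda$ produced by those very same edges, and the scalar inequality above is invoked, that the books balance. For $p=1$ that inequality is essentially trivial, but one must instead handle the set-valued eigeninclusion carefully when writing the defect identity. Everything after the core estimate (the genus bookkeeping and the intersection property) is routine given the standard properties of $\gamma$ and the min–max definition of $\lambda_k$.
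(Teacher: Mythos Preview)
Your approach is essentially the same as the paper's: build the span $U$ of the restrictions of $f$ to its $(-\sigma)$-strong nodal domains, show $\RQ_p^\sigma(g)\ge\lambda$ for all $g\in U\setminus\{0\}$ via the elementary scalar inequality (the paper's Lemma~\ref{lemma:elementary}), and finish with the intersection property of the genus (the paper's Lemma~\ref{lemma:intersect}). A couple of cosmetic slips to clean up: the normalization $\tilde s_i=s_i/\|f_i\|_p$ is spurious---the cross-term formula comes out with plain $s_i$, not $\tilde s_i$ (harmless, since your scalar inequality holds for all real coefficients); and your justification that ``$f(x)-\sigma_{xy}f(y)$ has the same sign as $f(x)$'' on frustrated edges is not always true, though the desired bound $f(x)\Phi_p(f(x)-\sigma_{xy}f(y))\le|f(x)|^p$ still follows simply from monotonicity of $\Phi_p$ together with $\sigma_{xy}f(x)f(y)>0$.
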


\begin{theorem}\label{thm:weak nodal domain}
     
       For $p>1$, if $\lambda=\lambda_k$ and 
     \begin{equation*}
         \lambda_1\leq \ldots\leq\lambda_{k-1}<\lambda_k=\lambda_{k+1}=\ldots=\lambda_{k+r-1}<\lambda_{k+r}\leq \ldots \leq\lambda_n,
     \end{equation*}
       then we have 
       \[\mathfrak{W}(f)\leq k+c-1\,\,\text{ and }\,\, \overline{\mathfrak{W}}(f)\leq n-k-r+c+1,\] where $c$ is the number of connected components of $G$.  
\end{theorem}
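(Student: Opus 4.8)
The plan is to treat both inequalities by the same mechanism: produce a compact, centrally symmetric test set of the right genus on which the Rayleigh quotient is controlled by $\lambda=\lambda_k$, and then squeeze its genus using the odd‑homeomorphism deformation lemma of Struwe together with the gaps $\lambda_{k-1}<\lambda_k$ (resp.\ $\lambda_{k+r-1}<\lambda_{k+r}$). I will describe the argument for $\mathfrak{W}(f)\le k+c-1$; the dual bound $\overline{\mathfrak{W}}(f)\le n-k-r+c+1$ is obtained in the same spirit, working on $(G,-\sigma)$ and building the test set out of the $\overline{\mathfrak{W}}(f)$ weak nodal domains of $f$ for $-\sigma$ via the intersection property of the Krasnoselskii genus, exactly as in the proof of Theorem~\ref{thm:nodal-anti-graph}, and then applying the deformation lemma at level $\lambda$ with the gap $\lambda_{k+r-1}<\lambda_{k+r}$.

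\emph{Test set.} Let $W_1^0,\dots,W_m^0$ be the weak nodal domains of $f$, $m=\mathfrak{W}(f)$, and put $g_i=f\,\mathbbm{1}_{W_i}$; these have pairwise disjoint supports and $\sum_i g_i=f$. The decisive structural fact is that every edge $\{x,y\}$ joining the non‑zero parts $W_i,W_j$ of two distinct weak nodal domains satisfies $f(x)\sigma_{xy}f(y)<0$ (otherwise the length‑one path $(x,y)$ would be a $W$‑path and merge the two classes) --- precisely the configuration used in the proof of Theorem~\ref{thm:nodal-signed-graph} for strong nodal domains. Testing $\Delta_p^\sigma f=\lambda\mu\Phi_p(f)$ against each $g_i$ and bounding the resulting cross‑edge contributions by the triangle inequality followed by the convexity estimate $(sa+ub)^p\le(a+b)^{p-1}(s^pa+u^pb)$ (Jensen for $x\mapsto x^p$, $p\ge1$) yields $\mathcal{R}_p^\sigma\!\bigl(\sum_i t_ig_i\bigr)\le\lambda$ for every $t\in\mathbb{R}^m$. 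Hence $A:=\{\,\sum_i t_ig_i/\|\sum_i t_ig_i\|_p : t\neq\mathbf{0}\,\}$ is a compact centrally symmetric subset of $\mathcal{S}_p$ with $\gamma(A)=m$ and $\sup_A\mathcal{R}_p^\sigma\le\lambda$.

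\emph{Deformation.} Since $p>1$, $\mathcal{R}_p^\sigma$ is $C^1$ and even on $\mathcal{S}_p$, so its negative gradient flow is a family of odd homeomorphisms fixing the critical set (the normalized eigenfunctions) and strictly decreasing $\mathcal{R}_p^\sigma$ off it. Writing $K_\lambda$ for the set of normalized eigenfunctions with eigenvalue $\lambda$, I would apply the odd‑homeomorphism deformation lemma of \cite{Struwe} --- first flowing $A$ so that every point not converging to $K_\lambda$ lands below level $\lambda$, then pushing the rest below $\lambda$ away from a small neighbourhood of $K_\lambda$ --- and combine it with the subadditivity and continuity of $\gamma$ and the fact that a closed subset of $\{\mathcal{R}_p^\sigma<\lambda\}$ has genus $\le k-1$ (because $\lambda_{k-1}<\lambda_k=\lambda$), to get
\[
\mathfrak{W}(f)=\gamma(A)\ \le\ (k-1)+\gamma\bigl(A\cap K_\lambda\bigr).
\]

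\emph{Counting the eigenfunctions inside $A$.} It then remains to show $\gamma(A\cap K_\lambda)\le c$. Chasing equality through the estimates of the Test‑set step, $g=\sum_i t_ig_i$ can satisfy $\mathcal{R}_p^\sigma(g)=\lambda$ only if $t_i=t_j$ whenever $W_i,W_j$ are joined by an edge of $G$ (signs forced by the triangle inequality, moduli by the strict convexity of $x\mapsto x^p$, $p>1$); so every eigenfunction in $A$ has the form $g=\sum_{\ell=1}^{\tau}s_\ell h_\ell$, where $h_\ell$ sums the $g_i$ over the $\ell$‑th connected component of the \emph{weak adjacency graph} $H$ (vertices: weak nodal domains; edges: pairs joined by an edge of $G$). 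For such a $g$ the eigenequation is automatic at every non‑zero vertex (there are no $G$‑edges between distinct components of $H$), and at a zero vertex $z$ it becomes the linear condition $\sum_\ell\Phi_p(s_\ell)\,a_{z,\ell}=0$, with $a_{z,\ell}=\sum_{y\sim z,\,y\in\supp h_\ell}w_{zy}\sigma_{zy}\Phi_p(f(y))$. Since $u\mapsto(\Phi_q(u_\ell))_\ell$ (with $q$ the conjugate exponent) is an odd homeomorphism of $\mathbb{R}^\tau$, this identifies $\gamma(A\cap K_\lambda)=\dim U$ where $U=\{u\in\mathbb{R}^\tau:\sum_\ell u_\ell a_{z,\ell}=0\text{ for all zero vertices }z\}$. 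The defining constraints are local to the connected components of $G$, so $\dim U=\sum_a\dim U^{(a)}$ over the $c$ components, and within one connected component I would argue that: a zero vertex touches the non‑zero parts of at most two distinct components of $H$ (a sign pigeonhole on $f(y)\sigma_{zy}$); the eigenequation $\sum_{y\sim z}w_{zy}\sigma_{zy}\Phi_p(f(y))=0$ at $z$ forces its incident non‑zero values to occur with both signs; and, combining this with the sign‑path definition of weak nodal domains, any pair of $H$‑components inside a component of $G$ is linked by a chain of such ``two‑sided'' zero vertices, each contributing an edge vector $e_\ell-e_{\ell'}$ to $\mathrm{span}\{a_z\}$, so $\dim U^{(a)}\le1$ and $\dim U\le c$. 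This gives $\mathfrak{W}(f)\le k+c-1$, and running the same scheme on $(G,-\sigma)$ for the top of the spectrum gives $\overline{\mathfrak{W}}(f)\le n-k-r+c+1$. The step I expect to be the real work is this last structural claim --- that within each connected component the weak adjacency graph becomes connected once the ``two‑sided'' zero vertices are added as edges --- since it is exactly where the signature, the eigenequation at the zeros of $f$, and the combinatorial definition of weak nodal domains have to be played off against one another.
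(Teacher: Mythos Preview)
Your structural core is the same as the paper's: one shows that every eigenfunction in the span $X=\mathrm{span}(f|_{U_1},\dots,f|_{U_m})$ of the weak nodal domain restrictions must already lie in the $c$-dimensional subspace $X'=\mathrm{span}(f|_{W_1},\dots,f|_{W_c})$ of the connected-component restrictions. The paper obtains this exactly via your cases: (a) an edge between non-zero parts of $U_i,U_j$ forces $t_i=t_j$ by the equality case of Lemma~\ref{lemma:elementary}; (b) a zero vertex $x_0\in U_i$ with a non-zero neighbour $y_0\in U_j$ forces $t_i=t_j$ directly from the eigenequations of $f$ and $g$ at $x_0$, using that (by the sign pigeonhole you noted) all neighbours of $x_0$ lie in $U_i\cup U_j$. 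In particular your $\Phi_p$-linearisation and the computation of $\dim U$ are not needed: case~(b) already yields $s_\ell=s_{\ell'}$ outright, so $A\cap K_\lambda\subset X'\cap\mathcal{S}_p$ and $\gamma(A\cap K_\lambda)\le c$ by monotonicity.

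Where you genuinely diverge is in how this is exploited. You split the genus via the deformation lemma, aiming for $\gamma(A)\le(k-1)+\gamma(A\cap K_\lambda)$; this is correct but requires an extra compactness/continuity step to pass from $\gamma(A\cap\overline{N})$, $N$ a neighbourhood of $K_\lambda$, down to $\gamma(A\cap K_\lambda)$. The paper instead argues by contradiction with a single invocation of Lemma~\ref{lemma:exsit1}: assuming $m\ge k+c$, choose a complement $Y$ of $X'$ in $X$ with $\dim Y\ge k$; then $\sup_{Y\cap\mathcal{S}_p}\RQ_p^\sigma=\lambda_k$, so $Y\cap\mathcal{S}_p$ is a minimising set and must contain an eigenfunction, which however lies in $X'\cap Y=\{0\}$. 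For the dual bound the paper uses the companion Lemma~\ref{lemma:exsit} (minimising over a linear subspace of codimension $\le k+r-2$) together with Lemma~\ref{lemma:intersect}, again avoiding any explicit genus bookkeeping. Your route works, but the complement-subspace trick is shorter and sidesteps the neighbourhood-shrinking subtlety; conversely, your organisation makes transparent that the ``$+c$'' is precisely $\gamma(A\cap K_\lambda)$, which is conceptually pleasant.
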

\begin{theorem}\label{thm:min nodal domain}

       For $p\geq 1$, if $\lambda=\lambda_k$ where 
     \begin{equation*}
         \lambda_1\leq \ldots\leq\lambda_{k-1}<\lambda_k=\lambda_{k+1}=\ldots=\lambda_{k+r-1}<\lambda_{k+r}\leq \ldots \leq\lambda_n,
     \end{equation*}
      and the corresponding eigenfunction $f$ has minimal support, then we have 
      \[\mathfrak{S}(f)\leq k\,\,\text{ and }\,\, \overline{\mathfrak{S}}(f)\leq n-k-r+2.\]  In addition, when $p=1$, and $f$ has minimal support, we further have that  $\mathfrak{S}(f)=1$. Moreover, when the graph is balanced, the number of zeros of $f$ is at least $k+r-2$.
\end{theorem}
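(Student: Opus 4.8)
The plan is to deduce Theorem~\ref{thm:min nodal domain} from Theorems~\ref{thm:nodal-signed-graph} and~\ref{thm:nodal-anti-graph} by using the minimality of $\supp f$ to improve, by exactly $r-1$, the baseline bounds those two results already give, namely $\mathfrak{S}(f)\le k+r-1$ (apply Theorem~\ref{thm:nodal-signed-graph} with $k+r-1$, since $\lambda<\lambda_{k+r}$) and $\overline{\mathfrak{S}}(f)\le n-k+1$ (apply Theorem~\ref{thm:nodal-anti-graph} with $k-1$, since $\lambda>\lambda_{k-1}$). The engine is the same in both cases: the restrictions to strong nodal domains together with a genus estimate.

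First I would set up the common construction. Write $m=\mathfrak{S}(f)$, let $S_1,\dots,S_m$ be the strong nodal domains of $f$ and $f_i=f\cdot\mathbbm{1}_{S_i}$, so the $f_i$ have pairwise disjoint supports with $\bigsqcup_iS_i=\supp f$. From the eigenequation, summation by parts, and the switching invariance of $\sigma_{xy}f(x)f(y)$ (which on every edge leaving $S_i$ is either $0$ or strictly negative), one gets $\RQ_p^\sigma(f_i)\le\lambda_k$, with the slack a sum of nonnegative terms indexed by the cut edges incident to $S_i$. Jensen's inequality for $t\mapsto|t|^p$ (the place where the convexity of the $p$-norm enters, as in the proof of Theorem~\ref{thm:nodal-signed-graph}) then upgrades this to $\RQ_p^\sigma\le\lambda_k$ on the whole set $\mathcal{B}:=\{\sum_it_if_i:\|\sum_it_if_i\|_p=1\}$, an odd homeomorphic image of $\mathbb{S}^{m-1}$; thus $\gamma(\mathcal{B})\ge m$, $\mathcal{B}\subseteq\{g:\supp g\subseteq\supp f\}$, and $f/\|f\|_p\in\mathcal{B}$. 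For $p>1$ I would also identify the equality locus: $\RQ_p^\sigma(\sum_it_if_i)=\lambda_k$ forces $t_i=t_j$ whenever $S_i,S_j$ are joined by a cut edge, so $M:=\{g\in\mathcal{B}:\RQ_p^\sigma(g)=\lambda_k\}$ is an odd homeomorphic image of $\mathbb{S}^{c'-1}$, where $c'$ is the number of components of the "cut graph'' $\hat G$ (vertices $S_1,\dots,S_m$, edges = cut edges of $f$); in particular $\gamma(M)=c'$.

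Now suppose, for contradiction, $m\ge k+1$. Then $\lambda_m\le\sup_{\mathcal{B}}\RQ_p^\sigma=\lambda_k$, hence $\lambda_m=\lambda_k$, and $m\le k+r-1$ by Theorem~\ref{thm:nodal-signed-graph}. If $\hat G$ is connected ($c'=1$), then $M=\{\pm f/\|f\|_p\}$ has genus $1$, the compact set $\mathcal{B}_\varepsilon=\{g\in\mathcal{B}:\dist(g,M)\ge\varepsilon\}$ satisfies $\sup_{\mathcal{B}_\varepsilon}\RQ_p^\sigma<\lambda_k$, and $\gamma(\mathcal{B}_\varepsilon)\ge\gamma(\mathcal{B})-\gamma(M)=m-1$ by subadditivity and continuity of $\gamma$; so $\lambda_{m-1}<\lambda_k$, forcing $m-1\le k-1$, a contradiction. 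If $\hat G$ is disconnected ($c'\ge2$), then $f=\sum_lf^{(C_l)}$ splits over the components $C_l$ of $\hat G$, with no edges at all between supports of distinct pieces; here I would invoke the odd‑homeomorphism deformation lemma of Struwe to deform $\mathcal{B}$ (keeping the diagonal $M$ fixed) and extract a $\lambda_k$‑eigenfunction supported strictly inside $\supp f$, contradicting minimality. Either way $\mathfrak{S}(f)\le k$. The bound $\overline{\mathfrak{S}}(f)\le n-k-r+2$ is the mirror image: run the same construction with the dual strong nodal domains and cut edges of $(G,-\sigma)$, start from the baseline $\overline{\mathfrak{S}}(f)\le n-k+1$ of Theorem~\ref{thm:nodal-anti-graph}, and use the deformation lemma again to gain the extra $r-1$.

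For $p=1$ I would instead use the localization property of $\Delta_1^\sigma$: the restriction of a $1$‑eigenfunction to a strong nodal domain is again a $1$‑eigenfunction (the fluxes on the edges leaving the domain are already forced, those inside are reused verbatim, and at the new zero vertices the required intersection with $\lambda_k\mu_x\mathrm{Sgn}(0)$ survives). Hence $\mathfrak{S}(f)\ge2$ would give an $f_i$ with strictly smaller support, so $\mathfrak{S}(f)=1$; the bound $\#\{x:f(x)\neq0\}\le n-k-r+2$ is the dual statement run at the level of single vertices (using the flexibility of $\mathrm{Sgn}(0)=[-1,1]$) rather than dual nodal domains. The hard part will be the case $c'\ge2$ for $p>1$: there is no quadratic form to project onto eigenspaces, and it is not a priori clear that an individual piece $f^{(C_l)}$ satisfies the eigenequation at zero vertices adjacent to several pieces — making the deformation argument actually yield a proper sub‑eigenfunction (equivalently, enough genus strictly below the level $\lambda_k$ to close the gap of $r-1$) is the technical heart, and is exactly why the odd‑homeomorphism deformation lemma, rather than an elementary genus count, is needed; for $p=1$ this is bypassed by localization at the cost of the separate combinatorial count in the last assertion.
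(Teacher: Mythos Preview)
Your proposal has the right spirit (minimal support $+$ extract a critical point with strictly smaller support), but the execution for $p>1$ has a real gap, and the clean shortcut is missed.

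For $\mathfrak S(f)\le k$ with $p>1$: your case split on the connectivity of the cut graph $\hat G$ is unnecessary, and the disconnected case $c'\ge2$ is not actually closed. The deformation lemma applied to the full sphere $\mathcal B$ only guarantees that \emph{some} critical point lies in $\mathcal B$; that critical point might be $\pm f/\|f\|_p$ itself, which gives nothing. Your suggestion to ``deform $\mathcal B$ keeping $M$ fixed and extract a $\lambda_k$-eigenfunction supported strictly inside $\supp f$'' is precisely the assertion to be proved, and you correctly flag that the pieces $f^{(C_l)}$ need not satisfy the eigenequation at zero vertices adjacent to several components. The paper bypasses all of this by a one-line trick: simply \emph{drop one nodal domain}. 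Set $X'=\mathrm{span}(f_1,\dots,f_{m-1})$. If $m>k$ then $\gamma(X'\cap\mathcal S_p)=m-1\ge k$ and still $\sup_{X'\cap\mathcal S_p}\RQ_p^\sigma\le\lambda_k$, so $X'\cap\mathcal S_p$ is itself a minimizing set for $\lambda_k$. Lemma~\ref{lemma:exsit1} then produces an eigenfunction $g\in X'$; since $\supp g\subset\bigcup_{i<m}S_i\subsetneq\supp f$, this contradicts minimality directly. No analysis of the equality locus $M$ and no case split are needed.

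For $\overline{\mathfrak S}(f)\le n-k-r+2$: this is not a formal ``mirror image''. After dropping one anti-nodal domain, the space $X'$ (of dimension $m-1\ge n-k-r+2$) satisfies $\inf_{X'}\RQ_p^\sigma\ge\lambda_k$, and Lemma~\ref{lemma:intersect} together with $\lambda_{k+r-1}=\lambda_k$ forces $\inf_{X'}\RQ_p^\sigma=\lambda_k$. At this point one needs a \emph{dual} existence lemma: a linear subspace of dimension $\ge n-k-r+2$ on which the \emph{infimum} of $\RQ_p^\sigma$ equals $\lambda_{k+r-1}$ must contain a critical point at that level. This is Lemma~\ref{lemma:exsit}, which is not a restatement of Lemma~\ref{lemma:exsit1}; its proof runs the odd-homeomorphism deformation in the reverse direction and, for $p=1$, requires a separate argument via Clarke subdifferentials. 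Your proposal does not isolate this step.

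For $p=1$: your localization argument for $\mathfrak S(f)=1$ is correct and matches the paper. The bound on the number of nonzeros then follows immediately from the $\overline{\mathfrak S}$ bound (proved for $p=1$ by the same $X'$ argument via Lemma~\ref{lemma:exsit}), because for $f=1_A-1_B$ with $A\cup B$ a single strong nodal domain every edge inside $A\cup B$ has $f(x)\sigma_{xy}f(y)>0$, hence $f(x)(-\sigma_{xy})f(y)<0$, so each vertex is its own anti-strong-nodal-domain and $\overline{\mathfrak S}(f)=|A\cup B|$. There is no need to run a separate argument ``at the level of single vertices''.
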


Let us first remark on the estimates of $\overline{\mathfrak{S}}(f)$ (resp., $\overline{\mathfrak{W}}(f)$), i.e.,   the number of strong (resp., weak) nodal domains of $f$ with respect to $(G,-\sigma)$. In the linear case, if $f$ is an eigenfunction of the signed Laplacian $\Delta_2^\sigma$ corresponding to $\lambda$, then it is also an eigenfunction of $-\Delta_2^\sigma$ corresponding to $-\lambda$. Since $-\Delta_2^\sigma$ can be considered as a signed Laplacian  of the graph $(G,-\sigma)$ (with a suitable choice of the potential function), the upper bound estimates of $\overline{\mathfrak{S}}(f)$ and $\overline{\mathfrak{W}}(f)$ follows directly from the signed  nodal domain theorem \cite[Theorem 4.1]{GL21+}. However, in the non-linear case, when $f$ is an eigenfunction of $\Delta_p^{\sigma}$, $f$ may not be an eigenfunction of $\Delta_p^{-\sigma}$ anymore. It is an interesting question to ask whether there are still upper bound estimates of $\overline{\mathfrak{S}}(f)$ and $\overline{\mathfrak{W}}(f)$ or not. Theorem \ref{thm:nodal-anti-graph}, Theorem \ref{thm:weak nodal domain} and Theorem \ref{thm:min nodal domain} above answer this question positively. Intriguingly, these upper bound estimates will be very useful in the proofs of our later results, including Theorem \ref{thm:non-var}, Theorem \ref{thm:forest} and Theorem \ref{thm:pr}.

Those above upper bounds can be regarded as discrete versions of the Courant's nodal domain theorem \cite{Courant23,CD53} proved in 1920s. Cheng \cite{Cheng76} studied Courant's theorem on Riemannian manifolds. 
The study of discrete nodal domain theorems for linear Laplacians on graphs dates back to the work of 
Gantmacher and Krein \cite{GK50} in 1940s and the work of Fiedler \cite{Fiedler73,Fiedler75,Fiedler752} in 1970s. Van der Holst \cite{vdH95,vdH96} proved that the second eigenfunction $f_2$ induces $2$ strong nodal domains if it  has minimal support. Duval and Reiner \cite{DR99} studied the   discrete nodal theorems of higher eigenfunctions. In 2001,  Davies, Gladwell, Leydold and Stadler \cite{DGLS01} established the discrete nodal domain theorems for generalized Laplacians. There are amount of works about discrete nodal
domain theorems for linear Laplacians, see, e.g., \cite{Berkolaiko08,Biyikoglu03,CdV93,Friedman93,Lovasz21,Powers88,Roth89}. The extensions to linear Laplacians on signed graphs have been discussed in \cite{Ali16,JMZ,GL21+}, while the extensions to non-linear Laplacians on graphs have been carried out in \cite{CSZ17,DPT21,TudiscoHein18}.


Those above results unify many results on the upper bounds of the number of nodal domains for $p$-Laplacians on graphs and signed graphs,  including  \cite[Theorem 4.1]{GL21+}, \cite[Theorem 5.4]{JMZ} for signed graphs, \cite[Theorem 3.4 and Theorem 3.5]{TudiscoHein18} for graphs. Moreover, the inequality  (see  \cite[Theorem 5.3 ]{JMZ} and  \cite[Theorem 2.2]{JZ21-PM}) \[\mathfrak{N}(f)\le\min\{k+r-1,n-k+r\},\] where $\mathfrak{N}(f)$ stands for the number of connected components of the support of $f$, becomes a direct consequence of these results, since we have $\mathfrak{N}(f)\le \min\{\mathfrak{S}(f),\overline{\mathfrak{S}}(f)\}$. 

We further point out that Theorem \ref{thm:weak nodal domain} can not hold for the case $p=1$, even for balanced signed graphs. A counterexample is given in \cite[Example 10]{CSZ17}.

For the proofs of these theorems, we prepare two lemmas.
The first one has been established in \cite{Amghibech,TudiscoHein18,JMZ}.

\begin{lemma}\label{lemma:elementary}
Let $t,s,a,b$ be real numbers. Then, we have for $p>1$
\begin{equation*}\label{eq:basic-inequality-p}
   \begin{cases}
|ta+sb|^p\ge (|t|^pa+|s|^pb)|a+b|^{p-2}(a+b),&\text{ if }  ab\le 0,\\
|ta+sb|^p\le (|t|^pa+|s|^pb)|a+b|^{p-2}(a+b),&\text{ if }  ab\ge 0.
   \end{cases} 
\end{equation*}
Moreover, the equality holds if and only if 
\begin{equation}\label{eq:lemma1=}
    ab=0\,\,\text{or}\,\, t=s
\end{equation}
in both cases. 

In the case of $p=1$, we have for any $ z\in\mathrm{Sgn}(a+b)$,  
$$\begin{cases}
|ta+sb|\ge (|t|a+|s|b)z,&\text{ if }ab\le 0,\\
|ta+sb|\le (|t|a+|s|b)z,&\text{ if }ab\ge 0.
\end{cases}$$
\end{lemma}
For any function $g:V\to \mathbb{R}$, we define 
$\Vert g\Vert_p^p=\sum_{x\in V}|g(x)|^p\mu_x$ for $p\geq 1$. We will use the notation  $\sum_{i\neq j}:=\sum_i\sum_{j:j\neq i}$ for simplicity. 
\begin{lemma}\label{lemma:Gij}
    For $p\geq 1$, let $f$ be an eigenfunction of $\Delta_p^\sigma$ corresponding to an  eigenvalue $\lambda$. Set $Z:=\{x\in V: f(x)=0\}$. Let $V_1, \ldots, V_m$ be a partition of $V\setminus Z$. Let $X$ be the linear function-space spanned by $f_1,\ldots, f_m$ where 
    \begin{equation*}
    f_i(x)=\begin{cases}f(x),&\text{ if }x\in V_i,\\ 0,&\text{ if } x\not\in V_i.\end{cases}
\end{equation*}
Then, for any $g=\sum_{i=1}^mt_if_i\in X\setminus \vec0$, we have 
\begin{equation}
    \left(\mathcal{R}_p^\sigma(g)-\lambda\right)\Vert g\Vert_p^p=\frac{1}{2}\sum_{i\neq j}\sum_{x\in V_i}\sum_{y\in V_j}w_{xy}G_{ij}(x,y),
\end{equation}
where 
\begin{equation*}
    G_{ij}(x,y)=|t_if_i(x)-\sigma_{xy}t_jf_j(y)|^p-\big(|t_i|^pf_i(x)-\sigma_{xy}|t_j|^pf_j(y)\big) \Phi_p(f_i(x)-\sigma_{xy}f_j(y))
\end{equation*}
if $p>1$, and if $p=1$,
\begin{equation*}
    G_{ij}(x,y)=|t_if_i(x)-\sigma_{xy}t_jf_j(y)|-\left(|t_i|f_i(x)-\sigma_{xy}|t_j|f_j(y)\right)z_{xy},
\end{equation*}
for some $z_{xy}\in \mathrm{Sgn}(f(x)-\sigma_{xy}f(y))$.
\end{lemma}
\begin{proof}
We first compute for any $p\geq 1$ that
\begin{equation}\label{eq:prayleigh-up}
	\begin{aligned}
&\sum_{\{x,y\}\in E}w_{xy}\left|g(x)-\sigma_{xy}g(y)\right|^p+\sum_{x\in V}\kappa_x|g(x)|^p
\\=&\sum_{i=1}^{m} \sum_{x\in V_i}\sum_{y\in Z}w_{xy}\left|t_if_i(x)\right|^p+\frac{1}{2}\sum_{i=1}^{m}\sum_{x\in V_i}\sum_{y\in V_i}w_{xy}|t_i|^p\left |f_i(x)-\sigma_{xy}f_i(y)\right|^p 
\\&+\frac{1}{2}\sum_{i\neq j}\sum_{x\in V_i}\sum_{y\in V_j}w_{xy}\left |t_if_i(x)-\sigma_{xy}t_jf_j(y)\right|^p+\sum_{x\in V}\kappa_x|g(x)|^p.
\end{aligned}
\end{equation}
We next deal with the case $p>1$.
Employing the eigen-equation, we have for each $i\in \{1,\ldots,m\}$
\begin{equation}\label{eq:prayleigh-down_1}
	\begin{aligned}
\lambda \Vert f_i\Vert^p_p & = \lambda\sum_{x\in V}\mu_x |f_i(x)|^p=\sum_{x\in V} f_i(x)\lambda\mu_x\Phi_p(f(x))=\sum_{x\in V} f_i(x)(\Delta_p^{\sigma}f)(x)
\\&=\sum_{x\in V_i}f_i(x)\sum_{y\sim x}w_{xy}\Phi_p\left(f(x)-\sigma_{xy}f(y)\right)+\sum_{x\in V_i}\kappa_x |f(x)|^p
\\&=\sum_{x\in V_i}\sum_{y\in Z}w_{xy}|f_i(x)|^p+\frac{1}{2}\sum_{x\in V_i}\sum_{y\in V_i}w_{xy}|f_i(x)-\sigma_{xy}f_i(y)|^p
\\&+\sum_{x\in V_i}\sum_{\substack{y\in V_j\\j\neq i}}w_{xy}f_i(x)\Phi_p(f_i(x)-\sigma_{xy}f_j(y))+\sum_{x\in V_i}\kappa_x |f_i(x)|^p.
\end{aligned}
\end{equation}
Consequently, we obtain
\begin{equation}\label{eq:prayleigh-down_2}
	\begin{aligned}
\lambda\Vert g \Vert^p_p&=\lambda\sum_{i=1}^{m}|t_i|^p\Vert f_i\Vert^p_p
\\&=\sum_{i=1}^{m}\sum_{x_i\in V_i}\sum_{y\in Z}w_{xy}|t_i|^p|f_i(x)|^p+\frac{1}{2}\sum_{i=1}^{m}\sum_{x\in V_i}\sum_{y\in V_i}w_{xy}|t_i|^p|f_i(x)-\sigma_{xy}f_i(y)|^p
\\&+\frac{1}{2}\sum_{i\neq j}\sum_{x\in V_i}\sum_{y \in V_j}w_{xy}\left(|t_i|^pf_i(x)-\sigma_{xy}|t_j|^pf_j(y) \right)\Phi_p(f_i(x)-\sigma_{xy}f_j(y)) +\sum_{i=1}^{m}\sum_{x\in V_i}\kappa_x|t_i|^p |f(x)|^p.
\end{aligned}
\end{equation}
Combining (\ref{eq:prayleigh-up}) and (\ref{eq:prayleigh-down_2}), we get
\begin{equation*}
   \sum_{\{x,y\}\in E}w_{xy}\left|g(x)-\sigma_{xy}g(y)\right|^p+\sum_{x\in V}\kappa_x|g(x)|^p-\lambda\Vert g \Vert_p^p=\frac{1}{2}\sum_{i\neq j}\sum_{x\in V_i}\sum_{y\in V_j}w_{xy}G_{ij}(x,y), 
\end{equation*}
where 
\begin{equation*}
    G_{ij}(x,y)=|t_if_i(x)-\sigma_{xy}t_jf_j(y)|^p-\big(|t_i|^pf_i(x)-\sigma_{xy}|t_j|^pf_j(y)\big) \Phi_p(f_i(x)-\sigma_{xy}f_j(y)).
\end{equation*}
This completes the proof for the case $p>1$. 

Finally, we discuss the case $p=1$. By definition, we have 
\[\Delta_1^\sigma f(x)\bigcap \lambda \mu_x \mathrm{Sgn}(f(x))\ne\emptyset,\] 
for any $x \in V$. Hence, there exist $z_{xy}\in \mathrm{Sgn}\big(f(x)-\sigma_{xy}f(y)\big),\,z_{xy}=-\sigma_{xy}z_{yx}$, $z_x\in \mathrm{Sgn}(f(x))$ and $z'_x\in \mathrm{Sgn}(f(x))$ such that $\sum_{y\sim x}w_{xy}z_{xy}+k_x z_x =\lambda \mu_x z'_x$, for any $x\in V$. For any $i\in \{1,\ldots,m\}$, we compute
\begin{equation}\label{eq:1rayleigh-down-1}
\begin{aligned}
\lambda \Vert f_i\Vert_1 & = \lambda\sum_{x\in V}\mu_x |f_i(x)|=\sum_{x\in V} f_i(x)\lambda\mu_x z'_x
\\&=\sum_{x\in V}f_i(x)\left(\sum_{y\sim x}w_{xy}z_{xy}+k_x z_x    \right)
\\&=\sum_{x\in V_i}f_i(x)\sum_{y\sim x}w_{xy}z_{xy}+\sum_{x\in V_i}\kappa_x |f(x)|
\\&=\sum_{x\in V_i}\sum_{y\in Z}w_{xy}|f(x)|+\frac{1}{2}\sum_{x\in V_i}\sum_{y\in V_i}w_{xy}|f_i(x)-\sigma_{xy}f_i(y)|
\\&+\sum_{x\in V_i}\sum_{\substack{y\in V_j\\j\neq i}}w_{xy}f_i(x)z_{xy}+\sum_{x\in V_i}\kappa_x |f(x)|.
\end{aligned}
\end{equation}
Consequently, we derive
\begin{equation}\label{eq:1rayleigh-down-2}
\begin{aligned}
\lambda\Vert g \Vert_1&=\lambda\sum_{i=1}^{m}|t_i|\Vert f_i\Vert_{1}
\\&=\sum_{i=1}^{m}\sum_{x_i\in V_i}\sum_{y\in Z}w_{xy}|t_i f_i(x)|+\frac{1}{2}\sum_{i=1}^{m}\sum_{x\in V_i}\sum_{y\in V_i}w_{xy}|t_i||f_i(x)-\sigma_{xy}f_i(y)|
\\&+\frac{1}{2}\sum_{i\neq j}\sum_{x\in V_i}\sum_{y \in V_j}w_{xy}\left(|t_i|f_i(x)-\sigma_{xy}|t_j|f_j(y) \right)z_{xy} +\sum_{i=1}^{m}\sum_{x\in V_i}\kappa_x|t_i| |f(x)|.
\end{aligned}
\end{equation}
Combining (\ref{eq:prayleigh-up}) and (\ref{eq:1rayleigh-down-2}) yields
\begin{equation*}
   \sum_{\{x,y\}\in E}w_{xy}\left|g(x)-\sigma_{xy}g(y)\right|+\sum_{x\in V}\kappa_x|g(x)|-\lambda\Vert g \Vert_1=\frac{1}{2}\sum_{i\neq j}\sum_{x\in V_i}\sum_{y\in V_j}w_{xy}G_{ij}(x,y), 
\end{equation*}
where 
\begin{equation*}
    G_{ij}(x,y)=|t_if_i(x)-\sigma_{xy}t_jf_j(y)|-\left(|t_i|f_i(x)-\sigma_{xy}|t_j|f_j(y)\right)z_{xy}.
\end{equation*}
This completes the proof for the case $p=1$.
\end{proof}
We are now well-prepared for the proof of Theorem \ref{thm:nodal-signed-graph}.
\begin{proof}[Proof of Theorem \ref{thm:nodal-signed-graph}]
 By definition, we have $\mathfrak{W}(f)\leq \mathfrak{S}(f)$. Next, we prove $\mathfrak{S}(f)\leq k.$
 
 Suppose that $f$ has $m$ strong nodal domains on $\Gamma=(G,\sigma)$ which are denoted by $V_1,\ldots,V_{m}$.  
Consider the linear function-space $X$ spanned by $f_1,\ldots,f_m$, where  $f_i$ is defined by
\begin{equation*}
    f_i(x)=\begin{cases}f(x),&\text{ if }x\in V_i,\\ 0,&\text{ if } x\not\in V_i.\end{cases}
\end{equation*}
Since $V_1,\ldots,V_{m}$ are pairwise disjoint, we have $\dim X=m$. Then we can use Proposition \ref{index} to get 
\[\gamma(X\cap \mathcal{S}_p)=m.\] 

We claim that  $\RQ_p^{\sigma}(g)\leq \lambda$ for any $g=\sum_{i=1}^{m} t_i f_i\in X\setminus \vec0$. Indeed, we have by Lemma \ref{lemma:Gij}, 
\[ \left(\mathcal{R}_p^\sigma(g)-\lambda\right)\Vert g\Vert_p^p=\frac{1}{2}\sum_{i\neq j}\sum_{x\in V_i}\sum_{y\in V_j}w_{xy}G_{ij}(x,y). \]
For any $i\neq j$, $x\in V_i$ and $y\in V_j$, we take $a=f_i(x),b=-\sigma_{xy}f_j(y)$, $t=t_i$ and $s=t_j$. Because $x$ and $y$ lie in different strong nodal domains, we have $ab=-f_i(x)\sigma_{xy} f_j(y)> 0$. Then we use  Lemma \ref{lemma:elementary} to get $G_{ij}(x,y)\leq 0$.
That is, we have $\RQ_p^{\sigma}(g)\leq \lambda$.

By definition, we have 
$$\lambda_{m}=\inf\limits_{X'\in \mathcal{F}_m(\mathcal{S}_p) }\sup\limits_{g'\in X'}
\RQ_p^\sigma(g')\le \sup\limits_{g\in X\cap \mathcal{S}_p}
\RQ_p^\sigma(g)\leq \lambda< \lambda_{k+1}.$$ This implies 
$m\leq  k$. 
\end{proof}

In order to prove the upper bound of $\overline{\mathfrak{S}}(f)$ in Theorem \ref{thm:nodal-anti-graph}, we recall the following lemma from \cite[Proposition 4.2.20]{Papageorgiou09}.
\begin{lemma}[\cite{Papageorgiou09}]\label{lemma:intersect}
    If $X$ is a Banach space, $Y$ is a finite-dimensional linear subspace of $X$, $p_Y\in \mathcal{L}(X)$ is the projection operator onto $Y$, and $A$ is a closed  centrally symmetric subset with $\gamma(A)>k=\dim(Y)$, then $A\cap(Id-p_Y)(X)\neq \emptyset$.
\end{lemma}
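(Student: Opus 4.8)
The plan is to argue by contradiction, converting a failure of the intersection $A\cap(Id-p_Y)(X)\ne\varnothing$ into an explicit odd continuous map from $A$ into a standard sphere $\mathbb{S}^{k-1}$, which by the very definition of $\gamma$ would force $\gamma(A)\le k$ and contradict the hypothesis $\gamma(A)>k$. \textbf{First I would} record the elementary but essential identity $(Id-p_Y)(X)=\ker p_Y$. Since $p_Y$ is a projection we have $p_Y^2=p_Y$, so $p_Y\circ(Id-p_Y)=p_Y-p_Y^2=0$, giving $(Id-p_Y)(X)\subseteq\ker p_Y$; conversely any $x$ with $p_Yx=\mathbf{0}$ satisfies $x=(Id-p_Y)x$, yielding the reverse inclusion. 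In particular $\mathbf{0}\in(Id-p_Y)(X)$.

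\textbf{Next}, suppose for contradiction that $A\cap(Id-p_Y)(X)=\varnothing$, i.e.\ $A\cap\ker p_Y=\varnothing$. Because $\mathbf{0}\in\ker p_Y$, this forces $\mathbf{0}\notin A$, hence $A\setminus\{\mathbf{0}\}=A$; moreover $p_Y(x)\ne\mathbf{0}$ for every $x\in A$. Thus the restriction $p_Y|_A$ is a continuous map $A\to Y\setminus\{\mathbf{0}\}$, and it is \emph{odd}: central symmetry of $A$ ($-A=A$) guarantees that oddness is even meaningful, and linearity of $p_Y$ gives $p_Y(-x)=-p_Y(x)$.

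\textbf{Then} I would transport this map into the standard sphere so as to match the definition of $\gamma$. Fix any linear isomorphism $\iota:Y\to\mathbb{R}^k$, available since $\dim Y=k$; it is continuous, odd, and carries $Y\setminus\{\mathbf{0}\}$ into $\mathbb{R}^k\setminus\{\mathbf{0}\}$. Composing with the normalization $v\mapsto v/|v|$, which is odd and continuous on $\mathbb{R}^k\setminus\{\mathbf{0}\}$, produces an odd continuous composite $h:A\to\mathbb{S}^{k-1}$,
\[
A \xrightarrow{\ p_Y\ } Y\setminus\{\mathbf{0}\} \xrightarrow{\ \iota\ } \mathbb{R}^k\setminus\{\mathbf{0}\} \xrightarrow{\ v\mapsto v/|v|\ } \mathbb{S}^{k-1}.
\]
By the definition of the Krasnoselskii genus (an odd continuous map from $A=A\setminus\{\mathbf{0}\}$ into $\mathbb{S}^{k-1}$ exists), this forces $\gamma(A)\le k$, contradicting $\gamma(A)>k$. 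Hence $A\cap(Id-p_Y)(X)\ne\varnothing$.

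\textbf{On the main obstacle:} I do not expect a serious difficulty here, as the entire argument is a direct application of genus monotonicity. The one conceptual point worth emphasizing is that the hypothesis $\gamma(A)>k$ is precisely an obstruction to the existence of any odd map from $A$ into a $(k-1)$-sphere, while the projection $p_Y$ manufactures exactly such a map the moment $A$ avoids $\ker p_Y$; the only verifications that genuinely require care are the kernel identity $(Id-p_Y)(X)=\ker p_Y$ and the oddness and continuity of the composite $h$, both of which follow immediately from the linearity of $p_Y$ and the finite-dimensionality of $Y$. Notably, neither compactness nor closedness of $A$ is used in the argument beyond what is implicit in the genus being defined; central symmetry is the only structural property of $A$ actually invoked.
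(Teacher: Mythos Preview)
Your proof is correct and is the standard argument for this intersection property of the Krasnoselskii genus. Note that the paper does not actually supply a proof of this lemma: it is simply quoted from \cite{Papageorgiou09} (Proposition 4.2.20) and used as a black box, so there is no ``paper's own proof'' to compare against. Your argument is essentially the one found in standard references.
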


\begin{proof}[Proof of Theorem \ref{thm:nodal-anti-graph}]
     By definition, we have $\overline{\mathfrak{W}}(f)\leq\overline{ \mathfrak{S}}(f)$. Next, we prove $\overline{\mathfrak{S}}(f)\leq n-k.$
    
As above, we suppose that $f$ has $m$ strong nodal domains on $\Gamma'=(G,-\sigma)$ which are denoted by $\overline{V}_1,\ldots,\overline{V}_{m}$.   Let  $\overline{X}$ be the linear function-space spanned by $f_1,\ldots,f_m$,
    where $f_i$ is defined as follows
\begin{equation*}
    f_i(x)=\begin{cases}f(x),&\text{ if }x\in \overline{V}_i,\\ 0,&\text{ if } x\not\in \overline{V}_i.\end{cases}
\end{equation*}

 We first prove that $\RQ_{p}^{\sigma}(g)\ge\lambda$ for any $g=\sum_{i=1}^nt_if_i\in \overline{X}\setminus \vec0$. Indeed, we have by Lemma \ref{lemma:Gij}, 
\[ \left(\mathcal{R}_p^\sigma(g)-\lambda\right)\Vert g\Vert_p^p=\frac{1}{2}\sum_{i\neq j}\sum_{x\in \overline{V}_i}\sum_{y\in \overline{V}_j}w_{xy}G_{ij}(x,y). \]
For any $i\neq j$, $x\in \overline{V}_i$ and $y\in \overline{V}_j$, we take $a=f_i(x),b=-\sigma_{xy}f_j(y)$ and $t=t_i,s=t_j$. Because $x$ and $y$ lie in different strong nodal domains on $\Gamma=(G,-\sigma)$, we have by definition $ab=-f_i(x)\sigma_{xy} f_j(y)<0$. Then we use  Lemma  \ref{lemma:elementary}  to get $G_{ij}(x,y)\geq 0$. That is, we have $\RQ_{p}^{\sigma}(g)\ge\lambda$.

Notice that, by Lemma \ref{lemma:intersect}, $X'\cap \overline{X}\ne\emptyset$ for any $X'\in \mathcal{F}_{n-m+1}(\mathcal{S}_p)$. Then we have by definition 
\begin{align*}
\lambda_{n-m+1}&=\inf\limits_{ X'\in \mathcal{F}_{n-m+1}( \mathcal{S}_p)}\sup\limits_{g'\in X'}\RQ_p^\sigma(g') \ge \inf\limits_{ X'\in \mathcal{F}_{n-m+1}( \mathcal{S}_p)}\inf\limits_{g'\in X'\cap \overline{X}}\RQ_p^\sigma(g') \\&\ge \inf\limits_{g\in \overline{X}\setminus \vec0}\RQ_p^\sigma(g) \ge\lambda>\lambda_{k},
\end{align*}
which implies $n-m+1>k$, i.e., $m\le n-k$. This completes the proof.
\end{proof}

To show the upper bounds of $\mathfrak{W}(f)$ and $\overline{\mathfrak{W}}(f)$ in Theorem \ref{thm:weak nodal domain}, we prepare the following two lemmas: The first one is a reformulation of a related result by Hein and  Tudisco  \cite[Lemma 2.3]{TudiscoHein18}; The second one is a new result for estimating the number of dual nodal domains. It is worth noting that any $f\in \mathcal{S}_p$ is a critical point of $\mathcal{R}_p^{\sigma}$ corresponding to $\lambda_k$ if and only if it is an eigenfunction of $\Delta_p^{\sigma}$ corresponding to $\lambda_k$.
\begin{lemma}\label{lemma:exsit1}
    For $p\ge 1$ and $k\geq 1$, let $A^*\in \mathcal{F}_k(\mathcal{S}_p)$ be such that  
\begin{equation*}
    \lambda_k=\inf\limits_{ A\in \mathcal{F}_{k}( \mathcal{S}_p)}\sup\limits_{g\in A}\RQ_p^\sigma(g)=\sup\limits_{g\in A^*}\RQ_p^{\sigma}(g).
\end{equation*}
Then $A^*$ contains at least one critical point of $\mathcal{R}_p^{\sigma}$ corresponding to $\lambda_k$.
\end{lemma}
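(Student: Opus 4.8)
The plan is to argue by contradiction, combining the (odd) first deformation lemma with the monotonicity of the Krasnoselskii genus under odd continuous maps. We regard $\RQ_p^\sigma$ as an even functional on the compact sphere $\mathcal{S}_p\subset\R^n$; since $\mathcal{S}_p$ is compact the Palais--Smale condition holds trivially, and $f\in\mathcal{S}_p$ is a critical point of $\RQ_p^\sigma$ restricted to $\mathcal{S}_p$ with critical value $\lambda$ precisely when $(\lambda,f)$ is an eigenpair of $\Delta_p^\sigma$ (the vanishing of the constrained gradient when $p>1$, and the differential inclusion $0\in\Delta_1^\sigma f(x)-\lambda\mu_x\,\mathrm{Sgn}(f(x))$ for all $x$ when $p=1$).

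Assume for contradiction that $A^*$ contains no critical point of $\RQ_p^\sigma$ with value $c:=\lambda_k$. Set $B:=\{g\in A^*:\RQ_p^\sigma(g)=c\}$; this is a nonempty compact subset of $\mathcal{S}_p$, centrally symmetric since $\RQ_p^\sigma$ is even, and by hypothesis it contains no critical point. Because $B$ is compact and disjoint from the critical set at level $c$, every point of $B$ admits a descent direction tangent to $\mathcal{S}_p$; by a partition of unity one builds a locally Lipschitz pseudo-gradient vector field $W$, supported in a neighbourhood $U$ of $B$ in $\mathcal{S}_p$, along which $\RQ_p^\sigma$ is nonincreasing everywhere and strictly decreasing on $B$. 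Averaging $W$ with $-W(-\,\cdot\,)$, which is again a pseudo-gradient by evenness of $\RQ_p^\sigma$, we may take $W$ odd; integrating it produces, for each small $t>0$, an odd continuous (indeed homeomorphic) deformation $\eta_t:\mathcal{S}_p\to\mathcal{S}_p$ with $\RQ_p^\sigma(\eta_t(g))\le\RQ_p^\sigma(g)$ for all $g$ and $\RQ_p^\sigma(\eta_t(g))<c$ for all $g\in B$.

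Now $A^*\setminus U$ is compact and, by definition of $B$, satisfies $\sup_{A^*\setminus U}\RQ_p^\sigma<c$; combined with the previous line and the continuity of $(t,g)\mapsto\RQ_p^\sigma(\eta_t(g))$, for $t>0$ small enough we obtain $\sup_{g\in\eta_t(A^*)}\RQ_p^\sigma(g)<c$. On the other hand $\eta_t(A^*)$ is compact and centrally symmetric, and since $\eta_t$ is odd continuous the genus monotonicity (\cite[Proposition 5.4]{Struwe}) gives $\gamma(\eta_t(A^*))\ge\gamma(A^*)\ge k$, so $\eta_t(A^*)\in\mathcal{F}_k(\mathcal{S}_p)$. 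Therefore $\lambda_k\le\sup_{g\in\eta_t(A^*)}\RQ_p^\sigma(g)<c=\lambda_k$, a contradiction, and the lemma follows.

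I expect the deformation step to be the main obstacle, specifically in the case $p=1$: there $\RQ_1^\sigma$ is only locally Lipschitz and $\Delta_1^\sigma$ is set-valued, so the pseudo-gradient flow must be built from the Clarke subdifferential (or one invokes a locally Lipschitz deformation lemma from nonsmooth critical point theory), while still keeping the flow odd and confined to $\mathcal{S}_p$; for $p>1$ this is the classical deformation lemma as in Struwe's book. One must also be careful that $W$ is genuinely defined on a neighbourhood of $B$ inside the sphere and vanishes outside $U$, so that $\eta_t$ fixes points of $A^*$ lying far from $B$ and the whole construction remains within $\mathcal{S}_p$.
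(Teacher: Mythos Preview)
Your argument is correct and matches the paper's approach: both prove the lemma by contradiction via the odd deformation lemma together with the monotonicity of the Krasnoselskii genus under odd continuous maps. The paper's proof is in fact only a one-line reference to \cite[Lemma 2.3]{TudiscoHein18}, so your write-up supplies considerably more detail than the original; your remarks on the $p=1$ case (locally Lipschitz functional, Clarke subdifferential, working on a smooth sphere) are exactly the technical points that the paper addresses explicitly only in the companion Lemma~\ref{lemma:exsit}.
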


\begin{proof}
The proof follows the same line of that of  \cite[Lemma 2.3]{TudiscoHein18}, with the only difference being that the deformation lemma  is used to  construct an odd continuous map  to deform the minimizing set $A^*$.
\end{proof}

\begin{lemma}\label{lemma:exsit}
    For $p\ge1$ and $k\geq 1$, let $X$ be a linear subspace of dimension $n-k+1$ such that  
\begin{equation*}
    \lambda_k=\inf\limits_{A\in \mathcal{F}_{k}( \mathcal{S}_p)}\sup\limits_{g\in A}\RQ_p^\sigma(g)=\inf\limits_{g\in X\setminus \vec0}\RQ_p^{\sigma}(g)=\min\limits_{g\in X\cap\mathcal{S}_p }\RQ_p^{\sigma}(g).
\end{equation*}
Then $X\cap \mathcal{S}_p$ contains as least one  critical point of $\mathcal{R}_p^{\sigma}$   corresponding to $\lambda_k$.
\end{lemma}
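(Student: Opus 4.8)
The plan is to argue by contradiction: assuming $X$ contains no critical point of $\RQ_p^\sigma$ corresponding to $\lambda_k$, I will use a deformation argument to lower the min-max value below $\lambda_k$, contradicting the variational characterization. Concretely, suppose that no $g \in X \cap \mathcal{S}_p$ with $\RQ_p^\sigma(g) = \lambda_k$ is a critical point. Since $X \cap \mathcal{S}_p$ is compact and the critical set $K_{\lambda_k} = \{g \in \mathcal{S}_p : \RQ_p^\sigma(g) = \lambda_k,\ g \text{ critical}\}$ is compact and, by hypothesis, disjoint from $X \cap \mathcal{S}_p$, there is a neighborhood of $K_{\lambda_k}$ in $\mathcal{S}_p$ that does not meet $X \cap \mathcal{S}_p$. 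I would then invoke the odd (equivariant) deformation lemma from Struwe's book \cite{Struwe} — the same tool cited for Lemma \ref{lemma:exsit1} and in the discussion of dual weak nodal domains — to produce, for suitable $\varepsilon > 0$, an odd continuous map $\eta : \mathcal{S}_p \to \mathcal{S}_p$ with $\eta(\{\RQ_p^\sigma \le \lambda_k + \varepsilon\} \setminus N) \subset \{\RQ_p^\sigma \le \lambda_k - \varepsilon\}$, where $N$ is a (small, centrally symmetric) neighborhood of $K_{\lambda_k}$ chosen disjoint from $X \cap \mathcal{S}_p$.

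Next I would exploit the intersection property (Lemma \ref{lemma:intersect}) together with the genus machinery. For any $A \in \mathcal{F}_k(\mathcal{S}_p)$, since $\gamma(A) \ge k$ and $X^\perp := (\mathrm{Id} - p_X)(\R^n)$ has dimension $k-1$ (as $\dim X = n-k+1$), Lemma \ref{lemma:intersect} does not directly apply — instead I need the complementary statement that $A$ must meet $X \cap \mathcal{S}_p$. The correct tool is the following: if $\gamma(A) \ge k$ and $Y$ is a linear subspace with $\mathrm{codim}\, Y \le k-1$, i.e. $\dim Y \ge n-k+1$, then $A \cap Y \ne \emptyset$; this is exactly Lemma \ref{lemma:intersect} applied with $\dim(\text{projection target}) = k-1 < \gamma(A)$, giving $A \cap (\mathrm{Id} - p_{Y^\perp})(\R^n) = A \cap Y \ne \emptyset$. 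Thus every $A \in \mathcal{F}_k(\mathcal{S}_p)$ intersects $X \cap \mathcal{S}_p$, and hence contains a point $g$ with $\RQ_p^\sigma(g) \ge \min_{X \cap \mathcal{S}_p} \RQ_p^\sigma = \lambda_k$. Now take a near-optimal $A^* \in \mathcal{F}_k(\mathcal{S}_p)$ with $\sup_{A^*} \RQ_p^\sigma \le \lambda_k + \varepsilon$. Since $A^* \cap N$ avoids $X \cap \mathcal{S}_p$ while $A^*$ must meet $X \cap \mathcal{S}_p$, applying $\eta$ to $A^*$ and using monotonicity of $\gamma$ under odd continuous maps (\cite[Proposition 5.4]{Struwe}) gives $\eta(A^*) \in \mathcal{F}_k(\mathcal{S}_p)$ with $\sup_{\eta(A^*)} \RQ_p^\sigma \le \lambda_k - \varepsilon$ — provided the genus of the part of $A^*$ landing in $N$ is controlled. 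This is the delicate point: one needs $\gamma(A^* \cap N) \le$ something small, which follows because $\gamma(N) = \gamma(K_{\lambda_k}) = \mathrm{multi}(\lambda_k)$, but in general $\mathrm{multi}(\lambda_k)$ can be large.

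To handle this properly I would argue as in Lemma \ref{lemma:exsit1}'s source \cite[Lemma 2.3]{TudiscoHein18}: set $A_1 = \overline{A^* \setminus N}$ and $A_2 = A^* \cap \overline{N}$, so $\gamma(A^*) \le \gamma(A_1) + \gamma(A_2)$ by subadditivity, whence $\gamma(A_1) \ge k - \gamma(A_2) \ge k - \mathrm{multi}(\lambda_k)$. If $\mathrm{multi}(\lambda_k) = 0$ (which is not the case here since $\lambda_k$ is an eigenvalue) this is immediate; in general one instead uses that $\eta$ can be taken to fix the sublevel set outside $N$ and maps $A_1$ into $\{\RQ_p^\sigma \le \lambda_k - \varepsilon\}$, so $\eta(A_1) \cup (\text{a suitable symmetric } k\text{-genus cap}) $ — actually the cleanest route is to note $\eta(A^*) \supset \eta(A_1)$ has $\gamma(\eta(A^*)) \ge \gamma(A_1)$, and then reselect: since $K_{\lambda_k}$ is disjoint from $X \cap \mathcal{S}_p$, we may first shrink the original family by intersecting with a fixed small neighborhood of $X$, forcing all relevant points to sit near $X$ and away from $K_{\lambda_k}$, so that the "bad set" $A_2$ is empty. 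With $A_2 = \emptyset$, $\eta(A^*) \in \mathcal{F}_k(\mathcal{S}_p)$ and $\sup_{\eta(A^*)} \RQ_p^\sigma \le \lambda_k - \varepsilon < \lambda_k = \lambda_k(\Delta_p^\sigma)$, the desired contradiction. The main obstacle, as flagged, is making the neighborhood choices compatible — ensuring simultaneously that $N$ is disjoint from $X \cap \mathcal{S}_p$, that the deformation $\eta$ pushes the complement of $N$ strictly below $\lambda_k$, and that genus is not lost; this is where the odd homeomorphism deformation lemma in Struwe's book and careful compactness arguments (exactly paralleling Lemma \ref{lemma:exsit1}) do the work, and for $p=1$ one must substitute the localization property for the deformation lemma as indicated in the introduction.
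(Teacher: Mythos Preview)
Your contradiction scheme has the right ingredients but deforms the wrong object in the wrong direction, and this is precisely the gap you yourself flag as ``the delicate point'' without resolving. You try to push a minimizing set $A^*$ \emph{down} through the sublevel $\{\RQ_p^\sigma\le\lambda_k+\varepsilon\}$; but nothing prevents $A^*$ from meeting $K_{\lambda_k}$ (indeed, Lemma~\ref{lemma:exsit1} says it must), so the bad piece $A_2=A^*\cap\overline N$ need not be empty, and your suggestion to ``shrink the original family to a neighborhood of $X$'' destroys the genus bound $\gamma(A^*)\ge k$ with no mechanism to recover it. The subadditivity split $\gamma(A^*)\le\gamma(A_1)+\gamma(A_2)$ only yields $\gamma(A_1)\ge k-\mathrm{multi}(\lambda_k)$, which is useless here.

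The paper's proof avoids this entirely by reversing the roles: it deforms $X\cap\mathcal{S}_p$ \emph{upward}. Since by hypothesis $X\cap\mathcal{S}_p$ is disjoint from $K_{\lambda_k}$ and lies in the superlevel $\{\RQ_p^\sigma\ge\lambda_k\}$, the odd deformation \emph{homeomorphism} $\theta$ from \cite[Theorem~3.11]{Struwe} carries all of $\mathcal{S}_p\cap X$ into $\{\RQ_p^\sigma\ge\lambda_k+\varepsilon\}$, with no excision needed. Then for a minimizing set $A$ one has $\gamma(\theta^{-1}(A))\ge k$ (since $\theta^{-1}$ is odd continuous), so Lemma~\ref{lemma:intersect} gives $\theta^{-1}(A)\cap X\ne\varnothing$, i.e.\ $A\cap\theta(\mathcal{S}_p\cap X)\ne\varnothing$, whence $\lambda_k=\sup_A\RQ_p^\sigma\ge\lambda_k+\varepsilon$. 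The essential point you are missing is that it is $X$, not $A^*$, that is guaranteed disjoint from the critical set, so $X$ is the object to deform; and because $\theta$ is a homeomorphism, the intersection property transfers via $\theta^{-1}$.

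For $p=1$, the paper does \emph{not} use the localization property here; it invokes Chang's nonsmooth deformation lemma \cite{Chang81,Chang} on $\RQ_1^\sigma|_{\mathcal{S}_2}$ to obtain the same odd homeomorphism on $\mathcal{S}_2$, then runs the identical argument.
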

\begin{proof}
We first concentrate on the case of $p>1$.  Suppose the contrary, that 
$X\cap \mathcal{S}_p$ has no critical points of $\RQ_p^\sigma$ corresponding to $\lambda_k$. Let $K_{\lambda_k}(\RQ_p^\sigma)$ be the set consists of all critical points in $\mathcal{S}_p$ of $\mathcal{R}_p^{\sigma}$ corresponding to  $\lambda_k$. By definition, we know $K_{\lambda_k}(\RQ_p^\sigma)$ is closed. By assumption, we have $X\cap \mathcal{S}_p\cap K_{\lambda_k}(\RQ_p^\sigma)=\emptyset$. Then there exists a neighborhood of $K_{\lambda_k}(\RQ_p^\sigma)$ denoted by $N(K_{\lambda_k}(\RQ_p^\sigma))$ such that $X\cap\mathcal{S}_p\cap N(K_{\lambda_k}(\RQ_p^\sigma))=\emptyset$.
Since $p>1$,  $\mathcal{S}_p$ is a $C^{1,1}$ manifold and $\RQ_p^{\sigma}$ is smooth, we can apply \cite[Theorem 3.11]{Struwe} to derive that  there exists an odd homeomorphism $\theta:\mathcal{S}_p\to \mathcal{S}_p$ with $$\theta(\{g\in\mathcal{S}_p:\RQ_p^\sigma(g)\ge \lambda_k-\epsilon\}\setminus N(K_{\lambda_k}(\RQ_p^\sigma)))\subset \{g\in\mathcal{S}_p:\RQ_p^\sigma(g)\ge \lambda_k+\epsilon\},$$ where  $\epsilon>0$ is sufficiently small. 
In particular, we 
 have \[\theta(\mathcal{S}_p\cap X)\subset \{g\in\mathcal{S}_p:\RQ_p^\sigma(g)\ge \lambda_k+\epsilon\} \setminus N(K_{\lambda_k}(\RQ_p^\sigma)).\] 
Let $A$ be a  minimizing set corresponding to  $\lambda_k$. We have $\gamma(A)\ge k$. Since $\theta$ is an odd homeomorphism, the inverse map $\theta^{-1}$ is odd continuous. By the continuity  property of the index function $\gamma$,  we have  $\gamma(\theta^{-1}(A))\ge k$. 
So, by the intersection property of the index function $\gamma$ (see also Lemma \ref{lemma:intersect}),  $\theta^{-1}(A)\cap \mathcal{S}_p\cap X\ne\emptyset$. Thus $A\cap\theta(\mathcal{S}_p\cap X)=\theta(\theta^{-1}(A)\cap \mathcal{S}_p\cap X)\ne\emptyset$. Then, we obtain $$\lambda_k=\sup_{g\in A}\RQ_p^\sigma(g)\ge \min_{g\in \theta(\mathcal{S}_p\cap X)}\RQ_p^\sigma(g)\ge \lambda_k+\epsilon $$ which is a contradiction.

For the case of $p=1$, we consider the restriction $\RQ_1^{\sigma}|_{\mathcal{S}_2}$. 
Then \cite[Remark 3.3]{Chang81} implies that the generalized Clarke  gradient $\partial \RQ_1^{\sigma}|_{\mathcal{S}_2}(g)$ restricted on $\mathcal{S}_2$ is the set $\{h-\langle h,g\rangle g:h\in \partial \RQ_1^{\sigma}(g)\}$. 
By \cite[Proposition 2.3.14]{Clarke}, the Clarke derivative satisfies $$\partial \RQ_1^{\sigma}(g)\subset \frac{1}{\|g\|_{1}}\left(\partial \mathrm{TV}(g)-\frac{\mathrm{TV}(g)}{\|g\|_{1}}\partial \|g\|_{1}\right)
\subset \frac{1}{\|g\|_{1}}\left(\Delta_1^\sigma g-\frac{\mathrm{TV}(g)}{\|g\|_{1}}\mu \mathrm{Sgn}(g)\right)$$
where $\mathrm{TV}(g):=\sum_{\{x,y\}\in E}w_{xy}|g(x)-\sigma_{xy}g(y)|+\kappa_x|g(x)|$. 
According to the facts $\langle g,\Delta_1^\sigma g\rangle=\mathrm{TV}(g)$ and $\langle g,\mu \mathrm{Sgn}(g)\rangle=\|g\|_{1}$, we have $\langle g,\partial \RQ_1^{\sigma}(g)\rangle= 0$, i.e., $\langle g,h\rangle=0$ for any $h\in \partial \RQ_1^{\sigma}(g)$. 
So, we have \[\partial\RQ_1^{\sigma}|_{\mathcal{S}_2}(g)=\partial \RQ_1^{\sigma}(g), \,\,\text{for any}\,\,g\in \mathcal{S}_2.\]
That is, the set of critical points  of $\RQ_1^{\sigma}$ with $l^2$-norm one coincide with the that of the restriction  $\RQ_1^{\sigma}|_{\mathcal{S}_2}$. 
We then apply \cite[Theorem 3.1, Remarks 3.3 and  3.4]{Chang} to deduce that there is an odd homeomorphism $\theta:\mathcal{S}_2\to \mathcal{S}_2$ with  $$\theta(\{g\in\mathcal{S}_2:\RQ_1^\sigma(g)\ge \lambda_k-\epsilon\}\setminus N(K_{\lambda_k}(\RQ_1^\sigma)))\subset \{g\in\mathcal{S}_2:\RQ_1^\sigma(g)\ge \lambda_k+\epsilon\},$$
where $\epsilon>0$ is sufficiently small.

Let $\eta:\mathcal{S}_1\to \mathcal{S}_2$ be an odd homeomorphism defined as $\eta(f)=f/\|f\|_2$. 
Then, along the line of the proof for the case of $p>1$, we derive for a  minimizing set $A\subset \mathcal{S}_1$  corresponding to $\lambda_k$ that, 
$$\lambda_k=\sup_{g\in A}\RQ_1^\sigma(g)=\sup_{g\in \eta(A)}\RQ_1^\sigma(g)\ge \min_{g\in \theta(\mathcal{S}_2\cap X)}\RQ_1^\sigma(g)
\ge \lambda_k+\epsilon,$$
which is a contradiction.
\end{proof}
\begin{proof}[Proof of Theorem \ref{thm:weak nodal domain}: Upper bound of $\mathfrak{W}(f)$]
      Suppose $f$ has  $m$ weak nodal domains which are denoted by  $U_1,\ldots,U_{m}$. Let $W_1,\ldots,W_c$ be the $c$ connected components of the graph. Then, for any $i\in\{1,\ldots,m\}$, there exists a unique $l\in\{1,\ldots,c\}$ such that $U_i\subset W_l$. For $l=1,\ldots,c$, We denote by 
      \[I_l=\{i\in\{1,\ldots,m\}:U_i\subset W_l\}\]
      the index set corresponding to $W_l$. Then, we have $\bigsqcup_{l=1}^c I_l=\{1,\ldots,m\}$. 

We prove that by contradiction. Assume $m\ge k+c$. Let $X$ be the linear function-space  spanned by $f|_{U_1},\ldots,f|_{U_{m}}$ where $f|_{U_i}=f$ on $ U_i$ and $f|_{U_i}=0$ on $V\setminus U_i$ for any $1\leq i\leq m$. Let $X'$ be the linear function-space  spanned by $f|_{W_1},\ldots,f|_{W_c}$ where $f|_{W_j}=f$ on $ W_j$ and $f|_{W_j}=0$ on $V\setminus W_j$ for any $1\leq j\leq c$. Similarly as the proof of Theorem \ref{thm:nodal-signed-graph}, we drive from Lemma \ref{lemma:elementary} and Lemma \ref{lemma:Gij} that \begin{equation}\label{eq:Rplambda}
    \RQ_p^\sigma(h)\le \lambda_k, \,\,\text{for any}\,\,h\in X\setminus \vec0.
\end{equation}
By definition, we have $f|_{W_l}=\sum_{i\in I_l}f|_{U_i}$, and hence $X'$ is a linear subspace of $X$. We can have a decomposition $X=X'\bigoplus Y$. Since $\dim X=m\geq k+c$ and $\dim X'=c$, we derive $\dim Y \geq k$, and hence, $\gamma(Y\cap \mathcal{S}_p)\geq k$ by Proposition \ref{index}.

According to the definition of variational eigenvalues, there holds
\begin{equation*}
    \lambda_k=\inf\limits_{ Y'\in \mathcal{F}_{k}( \mathcal{S}_p)}\sup\limits_{g'\in Y'}\RQ_p^\sigma(g')\leq \max\limits_{g'\in Y \cap \mathcal{S}_p}\RQ_p^\sigma(g')\leq\lambda_k.
\end{equation*}
So we have $\max\limits_{g'\in Y \cap \mathcal{S}_p}\RQ_p^\sigma(g')=\lambda_k.$ By Lemma \ref{lemma:exsit1}, there exists an eigenfunction $g=\sum_{i=1}^{m} t_i f|_{U_i}\in Y$ corresponding to $\lambda_k$. That is, the equality in \eqref{eq:Rplambda} holds for $h=g=\sum_{i=1}^{m} t_i f|_{U_i}$. 

Let $U_i$ and $U_j$ be two adjacent weak nodal domains. If there exist $x_0\in U_i$  and $y_0\in U_{j}$ such that $\{x_0,y_0\}\in E$, $f(x_0)\neq 0$ and $f(y_0)\neq 0$, then we derive from the condition \eqref{eq:lemma1=} in Lemma \ref{lemma:elementary} that $t_i=t_j$. 
If, otherwise, there exist $x_0\in U_i$ and $y_0\in U_{j}$ such that $\{x_0,y_0\}\in E$ and $f(x_0)=0$, $f(y_0)\neq 0$  or  $f(x_0)\neq 0$, $f(y_0)= 0$, then we claim $t_i=t_j$ still holds. Without loss of generality, we assume $f(x_0)=0$ and $f(y_0)\neq 0$.

Indeed, since $f$ and $g$ are eigenfunctions, we have
    \begin{equation}\label{eq:eigenweak}
    \sum_{y\sim x}w_{x_0y}\Phi_p(\sigma_{x_0y}f(y))=0,\,\,\text{and}\,\,\sum_{y\sim x}w_{x_0y}\Phi_p(\sigma_{x_0y}g(y))=0.
    \end{equation}
We derive from Proposition \ref{pro:cap} that every $y\sim x_0$ lies in either $U_i$ or $U_j$. In fact, if there exists
$y\sim x_0$ such that $y\in U_k$ for some $k\neq i,j$, then we have $x_0\in U_i\cap U_j\cap U_k$ by definition of weak nodal domains and the fact $f(x_0)=0$. This contradicts to Proposition \ref{pro:cap}.  From the equalities in \eqref{eq:eigenweak}, we obtain
\begin{align*}
&\sum_{y\in U_i}w_{x_0y}\Phi_p(\sigma_{x_0y}f(y))+\sum_{y\in U_j}w_{x_0y}\Phi_p(\sigma_{x_0y}f(y))=0,\\
&\Phi_p(t_i)\sum_{y\in U_i}w_{x_0y}\Phi_p(\sigma_{x_0y}f(y))+\Phi_p(t_j)\sum_{y\in U_j}w_{x_0y}\Phi_p(\sigma_{x_0y}f(y))=0,
\end{align*}
and hence,
\begin{equation}\label{eq:Phiptitj}
   \left(\Phi_p(t_i)-\Phi_p(t_j)\right)\sum_{y\in U_j}
   w_{x_0y}\Phi_p(\sigma_{x_0y}f(y))
   =0.
\end{equation} 
By definition of weak nodal domain walk, for any $y,y'\in U_j$ with $\{x_0,y\},\{x_0,y'\}\in E$, we have $(\sigma_{x_0y}f(y))\cdot (\sigma_{x_0y'}f(y'))=f(y)\sigma_{yx_0}\sigma_{x_0y'}f(y')\ge0$ and $f(y_0)\neq 0$, which implies that
\begin{equation*}
  \sum_{y\in U_j}
   w_{x_0y}\Phi_p(\sigma_{x_0y}f(y))=  \sum_{y\in U_j}
   w_{x_0y}|f(y)|^{p-2}(\sigma_{x_0y}f(y))\ne0.
\end{equation*}
Thus, we derive from (\ref{eq:Phiptitj}) that $\Phi(t_i)-\Phi(t_{j})=0$, which  yields  $t_i=t_{j}$. 

In conclusion, we have $t_i=t_{j}$ whenever $U_i$ and $U_{j}$ are adjacent. Thus, in each connected component $W_l$, we use Proposition \ref{pro:connected} to  get  $t_i=t_j$ whenever $i,j\in I_l$. But this implies $g\in X'\setminus \vec0$, which is a contradiction with $g\in Y$. This completes the proof of $\mathfrak{W}(f)\le k+c-1$.
\end{proof}
Next, we prove the upper bound of $\overline{\mathfrak{W}}(f)$.
\begin{proof}[Proof of Theorem \ref{thm:weak nodal domain}: Upper bound of $\overline{\mathfrak{W}}(f)$]
     Suppose $f$ has  $m$ weak nodal domains which are denoted by  $\overline{U}_1,\ldots,\overline{U}_{m}$ with respect to the opposite  signed graph $(G,-\sigma)$. 

Suppose, to the contrary, that $m\geq n-k-r+c+2$. Let $\{\overline{W}_i\}_{i=1}^c$ be the connected components of $G$. For any $1\leq i \leq m$, let $f|_{\overline{U}_i}$ be the function that equals $f$ on $\overline{U}_i$ and zero on $V\setminus \overline{U}_i$. Define $\overline{X}$ to be the linear function-space spanned by $f|_{\overline{U}_1},\ldots,f|_{\overline{U}_m}$. For any $1\leq j\leq c$, let $f|_{\overline{W}_j}$ be the function that equals $f$ on $\overline{W}_j$ and equals zero on $V\setminus \overline{W}_j$. Define $\overline{X}'$ to be the linear function-space spanned by $f|_{\overline{W}_1},\ldots,f|_{\overline{W}_c}$. As above, $\overline{X}'$ is a linear subspace of $\overline{X}$ and we can have a decomposition  $\overline{X}=\overline{X}'\bigoplus \overline{Y}$. Since $\dim \overline{X}\geq n-k-r+c+2$ and $\dim \overline{X}'=c$, we have $\dim  \overline{Y}\geq n-k-r+2$.

Following the same line of the proof of Theorem \ref{thm:nodal-anti-graph}, we drive from Lemma \ref{lemma:elementary} and Lemma \ref{lemma:Gij} that 
\begin{equation}\label{eq:Rpdual}
    \RQ_p^\sigma(h)\ge \lambda_k,\,\,\text{for any}\,\,h\in \overline{X}\setminus\vec0.
\end{equation}
Observe by Lemma \ref{lemma:intersect} that $A\cap  \overline{Y}\neq \emptyset$ for any $A\in \mathcal{F}_{k+r-1} (\mathcal{S}_p)$. Then we prove that
\begin{align*}
\lambda_k=\lambda_{k+r-1}&=\inf\limits_{ A\in \mathcal{F}_{k+r-1}( \mathcal{S}_p)}\sup\limits_{g'\in A}\RQ_p^\sigma(g') \ge \inf\limits_{ A\in \mathcal{F}_{k+r-1}( \mathcal{S}_p)}\inf\limits_{g'\in A\cap \overline{Y}}\RQ_p^\sigma(g') \\&\ge \inf\limits_{g\in \overline{Y}\setminus\vec0}\RQ_p^\sigma(g) \ge \lambda_{k},
\end{align*}
So the above inequalities hold with equalities. In particular,
\begin{equation*}
    \min\limits_{g'\in  \overline{Y}\setminus\vec0}\RQ_p^\sigma(g')=\lambda_k.
\end{equation*}
Then, Lemma \ref{lemma:exsit} implies that there exists an eigenfunction $g=\sum_{i=1}^{m} t_i f|_{\overline{U}_i}\in \overline{Y}$ with $\RQ_p^\sigma(g)=\lambda_k$. That is, the equality in (\ref{eq:Rpdual}) holds for $h=g=\sum_{i=1}^{m} t_i f|_{\overline{U}_i}$.
 
Along the same line of the proof for $\mathfrak{W}(f)\le k+c-1$, we get a contradiction that the nonzero function  $g$ belongs to both $\overline{X}'$ and $\overline{Y}$, which completes the proof.
\end{proof}
In the following, we prove Theorem \ref{thm:min nodal domain}. For the $p=1$ part of Theorem \ref{thm:min nodal domain}, we show the following lemma.

\begin{lemma}[localization property of 1-Laplacian]\label{lem:localization-1}
Let $(\lambda,f)$ be an eigenpair of $\Delta_1^\sigma$. Then, for any strong nodal domain $U$ of $f$, and any $c\ge0$ such that $\{x\in U:f(x)>c\}$ or $\{x\in U:f(x)<-c\}$ is non-empty,  both $f|_U$ and $1_{\{x\in U:f(x)>c\}}-1_{\{x\in U:f(x)<-c\}}$ are eigenfunctions corresponding to the same eigenvalue $\lambda$ of $\Delta_1^\sigma$.

In addition, if $f$ has minimal support, then $f$ has only one strong nodal domain, denoted by $U$, and $f$ must be in the form of $t(1_A-1_B)$ for some $t\ne0$ and some disjoint subsets $A,B$ with $A\cup B=U$. Moreover,  \[\mathsf{X}_{\lambda}(\Delta_1^\sigma)\cap\{g\in C(V):\mathrm{supp}(g)\subset A\cup B\}\subset\{1_{A'}-1_{B'}:A'\cup B'=A\cup B\}\] 
is a finite set with index $1$. 
\end{lemma}
\begin{proof}Set $f_{U,c}:=1_{\{x\in U:f(x)>c\}}-1_{\{x\in U:f(x)<-c\}}$.  First, it is straightforward to verify that 
\begin{align*}
\mathrm{Sgn}(f(x)-\sigma_{xy}f(y))\subset \mathrm{Sgn}(f|_U(x)-\sigma_{xy}f|_U(y))\subset \mathrm{Sgn}(f_{U,c}(x)-\sigma_{xy}f_{U,c}(y))
\end{align*}
 and 
 \[\mathrm{Sgn}(f(x))\subset \mathrm{Sgn}(f|_U(x))\subset  \mathrm{Sgn}(f_{U,c}(x))\]
 for any $x,y\in V$, any $c\ge0$ and any strong nodal domain $U$ of $f$. 
It means that as a set-valued map, $\Delta_1^\sigma f(x)\subset \Delta_1^\sigma f|_U(x)\subset \Delta_1^\sigma f_{U,c}(x)$ for any $x\in V$. 
Since $f$  is an eigenfunction corresponding to an eigenvalue $\lambda$ of $\Delta_1^\sigma$, we have the differential inclusion $$0\in \Delta_1^\sigma f(x)- \lambda \mu_x \mathrm{Sgn}(f(x))\subset \Delta_1^\sigma f|_U(x)- \lambda \mu_x \mathrm{Sgn}(f|_U(x))\subset \Delta_1^\sigma f_{U,c}(x)- \lambda \mu_x \mathrm{Sgn}(f_{U,c}(x)),$$ for any $x\in V$. That is, both $f|_U$ and $f_{U,c}$ are eigenfunctions  corresponding to $\lambda$.

Now, we further assume that $f$ has minimal support. Then, by the localization property proved above, $f$ has only one strong nodal domain, denoted by $U$. 
Suppose, to the contrary, that $f$ is not in the form of $t(1_A-1_B)$. Then there exists $c>0$ such that the support of $f_{U,c}$ is a nonempty proper subset of $U$. So,  we construct an eigenfunction $f_{U,c}$ corresponding to the eigenvalue $\lambda$, but its support is a proper subset of the support of $f$, which leads to a contradiction with the minimal support assumption on $f$. 

Therefore, we have shown that $f$ is in the form of $t(1_A-1_B)$, and its strong nodal domain $U$ is the disjoint union of $A$ and $B$. 
Clearly, for any $g$ whose support is included in $U$, if $g$ is also an eigenfunction corresponding to the eigenvalue $\lambda$, $g=t'(1_{A'}-1_{B'})$ for some $t'\ne0$ and some disjoint subsets $A'$ and $B'$ with $A'\cup B'=U=A\cup B$. That means, $\mathsf{X}_{\lambda}(\Delta_1^\sigma)\cap \{g\in C(V):\mathrm{supp}(g)\subset U\}$ is a finite set, and its index is one. 
\end{proof}

\begin{proof}[Proof of Theorem \ref{thm:min nodal domain}]
Recall we assume that $f$ has minimal support.

We first prove that $\mathfrak{S}(f)\leq k$. Let $\{V_i\}_{i=1}^m$ be the strong nodal domains of $f$ on $\Gamma=(G,\sigma)$. We prove it by contradiction. Assume $m>k$. Consider two linear spaces $X$ and $X'$ defined as follows
\begin{equation*}
	X=\left\{\left.\sum_{i=1}^{m}a_if|_{V_i}\,\right|\,a_i\in \R\right\}\,\,\text{and}\,\,X'=\left\{\left.\sum_{i=1}^{m-1}a_if|_{V_i}\,\right|\,a_i\in \R\right\},
\end{equation*}
where $f|_{V_i}$ is the restriction of $f$ to $V_i$. 
By the proof of Theorem \ref{thm:nodal-signed-graph}, we have 
\begin{equation}
    \mathcal{R}_p^\sigma(g) \leq \lambda_{k}, \,\,\text{for any}\,\,g\in X\setminus\vec0.
\end{equation}

 By Proposition \ref{index}, we have $\gamma(X\cap \mathcal{S}_p)=m>k$ and $\gamma(X'\cap \mathcal{S}_p)=m-1\geq k$.
By definition of variational eigenvalues, we get $$\lambda_{k}=\inf\limits_{A\in \mathcal{F}_k(\mathcal{S}_p)}\sup\limits_{g'\in A}
\mathcal{R}_p^\sigma(g')\le \sup\limits_{g\in X'\cap \mathcal{S}_p}
\mathcal{R}_p^\sigma(g)\le\sup\limits_{g\in X\cap \mathcal{S}_p}
\mathcal{R}_p^\sigma(g) \leq \lambda_{k}.$$
Therefore, all the inequalities above are equalities. In particular, $X'\cap \mathcal{S}_p$ is a minimizing set. By Lemma  \ref{lemma:exsit1}, there exists an eigenfunction $g_0=\sum_{i=1}^{m-1}b_if|_{V_i}$ corresponding to $\lambda$, which contradicts to the fact that $f$ has minimal support. This proves $m\leq k$.

Next, we prove $\overline{\mathfrak{S}}(f)\leq n-k-r+2$. 
Let $\{\overline{V}_i\}_{i=1}^m$ be the strong nodal domains of $f$ with respect to the opposite  signed graph $(G,-\sigma)$.  We prove it by contradiction. Assume $m>n-k-r+2$. Consider two linear spaces $X$ and $X'$ defined as
\begin{equation*}
	X=\left\{\left.\sum_{i=1}^{m}a_if|_{\overline{V}_i}\,\right|\,a_i\in \R\right\}\,\mathrm{and}\,X'=\left\{\left.\sum_{i=1}^{m-1}a_if|_{\overline{V}_i}\,\right|\,a_i\in \R\right\}.
\end{equation*}
By the proof of Theorem \ref{thm:nodal-anti-graph}, we have
\begin{equation}
    \mathcal{R}_p^\sigma(g) \geq \lambda_{k}, \,\,\text{for any}\,\,g\in X\setminus\vec0.
\end{equation}

By Proposition \ref{index}, $\gamma(X\cap \mathcal{S}_p)=m\ge n-k-r+3$ and $\gamma(X'\cap \mathcal{S}_p)=m-1\geq n-k-r+2$. 
Applying Lemma \ref{lemma:intersect}, $A\cap X'\ne\emptyset$ for any centrally symmetric compact subset $A\subset \mathcal{S}_p$  with  $\gamma(A)\ge k+r-1$.   Then we have $$\lambda_{k+r-1}=\inf\limits_{A\in \mathcal{F}_{k+r-1}(\mathcal{S}_p)}\sup\limits_{g'\in A}
\mathcal{R}_p^\sigma(g')\ge \inf\limits_{g\in X'\cap \mathcal{S}_p}
\mathcal{R}_p^\sigma(g)\ge\inf\limits_{g\in X\cap \mathcal{S}_p}
\mathcal{R}_p^\sigma(g) \ge \lambda_{k}=\lambda_{k+r-1}.$$
Therefore, all the inequalities above are equalities. Next, by Lemma \ref{lemma:exsit},  $X'\cap \mathcal{S}_p$ contains a critical point of $\RQ_p^\sigma$ corresponding to $\lambda_k$. That is, there exists an eigenfunction $\overline{g}=\sum_{i=1}^{m-1}b_if|_{\overline{V}_i}\in X'\setminus\vec0$ corresponding to the eigenvalue $\lambda_k$, which contradicts to the fact that $f$ has minimal support. This shows $m\leq n-k-r+2$.

In the particular case of $p=1$, we actually have $\mathfrak{S}(f)=1$ by Lemma \ref{lem:localization-1}. Moreover, we can assume without loss of generality that $f=1_A-1_B$ for disjoint subsets $A$ and $B$, where $A\cup B$ is the strong nodal domain of $f$. When the graph is balanced, we obtain by the definition of strong nodal domains that  $\overline{\mathfrak{S}}(f)=|A\cup B|$. The estimate $\overline{\mathfrak{S}}(f)\leq n-r-k+2$ proved above tells $|A\cup B|\le n-r-k+2$. Consequently, the number of zeros of $f$ is at least $k+r-2$. 
\end{proof}
Next, we present two important applications of the upper bounds for $\mathfrak{S}(f)$, $\overline{\mathfrak{S}}(f)$, $\mathfrak{W}(f)$ and $\overline{\mathfrak{W}}(f)$ in Theorem \ref{thm:nodal-signed-graph}, Theorem \ref{thm:nodal-anti-graph}, and Theorem \ref{thm:weak nodal domain}. The estimates of the quantity 
\[\mathfrak{S}(f)+\overline{\mathfrak{S}}(f)\] for an eigenfunction $f$ will play an essential role.

\begin{theorem}\label{thm:non-var}
Let $\Gamma=(G,\sigma)$ be a signed graph with $G=(V,E)$. Let $f$ be an eigenfunction corresponding to a non-variational eigenvalue. If $|E|<|V|$, then $f$ must have zeros. 
\end{theorem}

We emphasize that the graph $G=(V,E)$ in the above theorem is allowed to be disconnected. 

\begin{proof}
    We prove it by contradiction. We assume that $f$ is non-zero on all vertices. Define \[E^+:=\{\{x,y\}\in E :f(x)\sigma_{xy}f(y)>0\}\,\,\text{and}\,\, E^-:=\{\{x,y\}\in E : f(x)\sigma_{xy}f(y)<0\}.\] 
    By assumption, we have $|E|=|E^+|+|E^-|.$ By definition of strong nodal domains, we have
\begin{equation*}
    \mathfrak{S}(f)\geq n-|E^+|\,\,\text{and}\,\, \overline{\mathfrak{S}}(f)\geq n-|E^-|,
\end{equation*}
where $n=|V|$.
Let $k$ be the index such that $\lambda_k<\lambda<\lambda_{k+1}$, where $\lambda$ is the eigenvalue to $f$. Then, Theorems \ref{thm:nodal-signed-graph} and  \ref{thm:nodal-anti-graph} tell that
\begin{equation*}
    \mathfrak{S}(f)\leq k\,\,\text{and}\,\,\overline{\mathfrak{S}}(f)\leq n-k.
\end{equation*}
Combining the above inequalities, we have 
\begin{equation*}
    n\geq  \mathfrak{S}(f)+\overline{\mathfrak{S}}(f)\geq 2n-|E^+|-|E^-|=2n-|E|>n,
\end{equation*}
which is a contradiction.
\end{proof}

   On a forest $G$, Theorem \ref{thm:non-var} implies that any eigenvalue $\lambda$ with an everywhere non-zero eigenfunction $f$ must be a variational eigenvalue. This can be strengthened as follows. Theorem \ref{thm:forest} below has been obtained in \cite[Theorem 3.8]{DPT21}. We provide here an alternative simple proof using the estimates of nodal domains and anti-nodal domains.  

\begin{theorem}\label{thm:forest}
      Let $G=(V,E)$ be a forest with $c$ connected components and $f$ be an everywhere non-zero eigenfunction corresponding to an eigenvalue $\lambda$. Then $\lambda$ is a variational eigenvalue with variational multiplicity $c$ and $f$ has exactly $k+c-1$ strong nodal domains.
\end{theorem}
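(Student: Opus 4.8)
The plan is to combine Theorem \ref{thm:non-var} with the two upper-bound theorems (Theorems \ref{thm:nodal-signed-graph} and \ref{thm:nodal-anti-graph}) together with a direct counting identity that holds on forests. First I would record that on a forest $G$ with $c$ connected components we have $l = |E| - |V| + c = 0$, so Theorem \ref{thm:non-var} immediately tells us that $\lambda$ is a variational eigenvalue; say $\lambda = \lambda_k$ with variational multiplicity $r$, i.e. $\lambda_{k-1} < \lambda_k = \dots = \lambda_{k+r-1} < \lambda_{k+r}$. It remains to pin down $r = c$ and $\mathfrak{S}(f) = k + c - 1$.

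The key observation is the edge-counting identity for an everywhere-nonzero $f$ on a forest. Writing $E^+ = \{\{x,y\}\in E : f(x)\sigma_{xy}f(y) > 0\}$ and $E^- = E \setminus E^+$ (there are no zero-contributions since $f$ is everywhere nonzero), the number of connected components of the subgraph $(V, E^+)$ is exactly $n - |E^+|$ \emph{because a forest has no cycles}, and similarly $\overline{\mathfrak{S}}(f) = n - |E^-|$. Hence
\begin{equation*}
\mathfrak{S}(f) + \overline{\mathfrak{S}}(f) = 2n - |E^+| - |E^-| = 2n - |E| = n + c,
\end{equation*}
using $|E| = |V| - c = n - c$ on a forest. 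This is the forest analogue of the inequality in Theorem \ref{thm:non-var}, now an \emph{equality}.

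Next I would feed in the upper bounds. Since $\lambda = \lambda_k < \lambda_{k+1}$ (as $\lambda_{k+r-1} < \lambda_{k+r}$ and we may take $k$ to be the first index with value $\lambda$, but actually I want the \emph{largest} such index for one bound and the smallest for the other), Theorem \ref{thm:nodal-signed-graph} gives $\mathfrak{S}(f) \le k + r - 1$ (applying it with $k+r-1$ in place of $k$, valid since $\lambda < \lambda_{k+r}$), and Theorem \ref{thm:nodal-anti-graph} gives $\overline{\mathfrak{S}}(f) \le n - k + 1$ (applying it with $k$ in place of $k$, valid since $\lambda > \lambda_{k-1}$, so $\overline{\mathfrak S}(f)\le n-(k-1) = n-k+1$). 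Adding, $\mathfrak{S}(f) + \overline{\mathfrak{S}}(f) \le n + r$. Combined with the identity $\mathfrak{S}(f) + \overline{\mathfrak{S}}(f) = n + c$ from the previous paragraph, this forces $r \ge c$. For the reverse inequality $r \le c$, I would use that each of the two upper bounds must then be tight: $\mathfrak{S}(f) = k + r - 1$ forces (via the equality case, i.e. the fact that one cannot do better than the variational count on each connected component) that within each of the $c$ components the restriction of $f$ contributes its maximal allowed number of strong nodal domains, and counting component-by-component via Courant-type bounds applied to each tree separately yields $r \le c$; alternatively, and more cleanly, I would invoke Theorem \ref{thm:weak nodal domain} / \ref{thm:min nodal domain} restricted to each component, or simply argue that on a connected tree an everywhere-nonzero eigenfunction forces variational multiplicity exactly $1$, then sum over the $c$ components. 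Once $r = c$ is established, $\mathfrak{S}(f) = k + r - 1 = k + c - 1$ follows from the tightness of Theorem \ref{thm:nodal-signed-graph}'s bound (equivalently from the identity, since then $\overline{\mathfrak S}(f) = n+c - \mathfrak S(f) = n - k + 1$ matches its own upper bound).

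The main obstacle I anticipate is the reverse inequality $r \le c$: the edge-counting identity plus the two global Courant-type bounds only give $r \ge c$, so to get equality I need a genuinely "local" argument showing that a connected tree carrying an everywhere-nonzero eigenfunction cannot inflate the variational multiplicity beyond $1$. The natural route is to note that the restriction of $f$ to a single tree component is itself an everywhere-nonzero eigenfunction there, apply the already-proven upper bounds on that component (where $c_{\text{loc}} = 1$), and use a gluing/direct-sum argument for the variational eigenvalues across components — the subtlety being that variational eigenvalues of a disjoint union are the sorted merge of the component spectra, and one must check the genus bookkeeping is exact rather than merely sub/super-additive. This is where I would be most careful, likely citing the monotonicity and intersection properties of $\gamma$ from \cite{Struwe} used already in Proposition \ref{pro:continuity} and Lemma \ref{lemma:intersect}.
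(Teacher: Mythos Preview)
Your outline is correct up to and including the identity $\mathfrak{S}(f)+\overline{\mathfrak{S}}(f)=n+c$ and the inequality $r\ge c$, and these parts match the paper's proof essentially verbatim. The gap is in the reverse inequality $r\le c$: you explicitly flag this as ``the main obstacle'' and propose a component-by-component reduction together with a gluing argument for variational eigenvalues across a disjoint union. That route is doable but unnecessary, and the genus bookkeeping you worry about is a genuine nuisance.

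The paper avoids all of this with a single observation you nearly make but do not carry through: since $f$ is everywhere nonzero, every weak nodal domain path is a strong nodal domain path, so $\mathfrak{W}(f)=\mathfrak{S}(f)$ and $\overline{\mathfrak{W}}(f)=\overline{\mathfrak{S}}(f)$. Then Theorem~\ref{thm:weak nodal domain} applied \emph{globally} (not ``restricted to each component'') gives $\mathfrak{S}(f)\le k+c-1$ and $\overline{\mathfrak{S}}(f)\le n-k-r+c+1$. Summing and comparing with the identity $\mathfrak{S}(f)+\overline{\mathfrak{S}}(f)=n+c$ yields $n+c\le n+2c-r$, i.e.\ $r\le c$. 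No localisation, no direct-sum argument, no genus subtleties. Once $r=c$, the bound $\mathfrak{S}(f)\le k+c-1$ together with $\overline{\mathfrak{S}}(f)\le n-k+1$ and the identity forces both to be equalities, giving $\mathfrak{S}(f)=k+c-1$.

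So your diagnosis that ``the two global Courant-type bounds only give $r\ge c$'' is the slip: you used only Theorems~\ref{thm:nodal-signed-graph} and~\ref{thm:nodal-anti-graph}, but the \emph{weak} nodal domain bounds of Theorem~\ref{thm:weak nodal domain} are a second, sharper pair of global bounds (with $c$ appearing explicitly), and once you notice $\mathfrak{W}=\mathfrak{S}$ in the zero-free case they close the argument immediately.
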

This can be regarded as a non-linear version of the results on the linear Laplacian \cite{Berkolaiko08, Biyikoglu03, Fiedler75}.
\begin{proof}
  Since $G$ is a forest, we have $|V|-|E|=c>0$. By Theorem \ref{thm:non-var} and the assumption that $f$ is non-zero on every vertex, $\lambda$ is a variational eigenvalue. We assume that $\lambda=\lambda_k$ and 
  \begin{equation*}
      \lambda_{k-1}<\lambda_{k}=\cdots=\lambda_{k+r-1}<\lambda_{k+r}.
  \end{equation*}
We define $E^+:=\{\{x,y\}\in E :f(x)\sigma_{xy}f(y)>0\}$ and $E^-:=\{\{x,y\}\in E :f(x)\sigma_{xy}f(y)<0\}$. Since $f$ does not have zeros, we have $|E|=|E^+|+|E^-|.$ By definition of strong nodal domains, we have  
\[\mathfrak{S}(f)=n-|E^+|\,\,\text{and}\,\, \overline{\mathfrak{S}}(f)=n-|E^-|,\]
where $n=|V|$. This yields
\begin{equation*}
    \mathfrak{S}(f)+\overline{ \mathfrak{S}}(f)=2n-|E^+|-|E^-|=n+c.
\end{equation*}
We first prove $r\leq c$. Since $f$ is non-zero on every vertex, we can use Theorem \ref{thm:weak nodal domain} to get 
\begin{equation*}
    \mathfrak{S}(f)=\mathfrak{W}(f)\leq k+c-1\,\,\text{and}\,\,\overline{\mathfrak{S}}(f)=\overline{\mathfrak{W}}(f)\leq n-k-r+c+1.
\end{equation*}
We compute
\begin{equation}\label{eq:anti-weak}
    n+c=\mathfrak{S}(f)+\overline{ \mathfrak{S}}(f)\leq k+c-1+n-k-r+c+1=n+2c-r,
\end{equation}
which implies $r\leq c$.

Next, we prove $r\geq c$. By Theorems \ref{thm:nodal-signed-graph} and  \ref{thm:nodal-anti-graph}, we have \[\mathfrak{S}(f)\leq k+r-1\,\,\text{and}\,\,\overline{\mathfrak{S}}(f)\leq n-k+1.\]
Hence, we obtain
\begin{equation}
    n+c=\mathfrak{S}(f)+\overline{ \mathfrak{S}}(f)\leq k+r-1+n-k+1=n+r.
\end{equation}
Then we have $c\leq r$. This concludes that $r=c$. So the equality holds in \eqref{eq:anti-weak}, which implies that $\mathfrak{S}(f)=k+c-1$ and $\overline{\mathfrak{S}}(f)=n-k+1.$
\end{proof}

We present below a nonlinear version of \cite[Theorem 3.18]{GL21+}. It tells that there exist many weighted signed graphs for which all the eigenfunctions of the first variational eigenvalue have no zeros.

\begin{theorem}
Let $\Gamma=(G,\sigma)$ be a connected signed graph with $G=(V,E)$. For any vertex weight $\mu$ and potential function $\kappa$, there exists an edge measure $w:E\to\mathbb{R}^+$ compatible with  $G$, that is $w_{xy}\neq 0$ if and only if $x\sim y$, such that the first variational eigenvalue $\lambda_1$ of $\Delta_p^\sigma, \,p>1$ has multiplicity $1$, and any corresponding eigenfunction $f_1$ is nonzero everywhere. 
\end{theorem}

\begin{proof}
Choose a switching function $\tau$ such that $\Gamma^\tau=(G,\sigma^\tau)$ has a spanning tree consisting of positive edges. Set $E_+^\tau:=\{\{x,y\}\in E:\sigma_{xy}^\tau=+1\}$ and $E_-^\tau:=\{\{x,y\}\in E:\sigma_{xy}^\tau=-1\}$. Then, the graph $(V,E_+^\tau)$ is a connected graph. By \cite[Theorem 4.1]{DPT21}, for any edge measure $w^+$ on $E_+^\tau$,  the first variational eigenvalue of the $p$-Laplacian on the graph $(V,E_+^\tau)$ has multiplicity $1$ and the corresponding eigenfunctions are either positive  everywhere or negative everywhere. 

For any  edge measure $w^-$ on $E_-^\tau$, and for any $\epsilon>0$, $w^++\epsilon w^-$ provides an edge measure on $E$. In fact, we have for every edge $\{x,y\}\in E$ that
\begin{equation*}
    \left(w^++\epsilon w^-\right)_{xy}=\begin{cases}w^+_{xy},&\text{ if }\{x,y\}\in E_+^\tau,\\ \epsilon w^-_{xy},&\text{ if } \{x,y\}\in E_-^\tau.\end{cases}
\end{equation*}
 By Proposition \ref{pro:continuity}, 
it still holds for $\epsilon$ sufficiently small that the first eigenvalue $\lambda_1$ of $\Delta_p^{\sigma^{\tau}}$ on the signed graph $\Gamma^{\tau}$ equipped with the edge measure $w^++\epsilon w^-$ has multiplicity $1$ and any corresponding eigenfunction $f_1$ is either positive everywhere or negative everywhere. By Proposition \ref{pro:switching-spectra},  $\tau f_1$ is an eigenfunction of $\Delta_p^{\sigma}$ on the signed graph $\Gamma$ with the edge measure $w^++\epsilon w^-$, and $\lambda_1$ is the corresponding eigenvalue of $\Delta_p^{\sigma}$. 
\end{proof}
To conclude this section, we point out that nodal domain properties of unbalanced signed graphs are quite different from that of balanced ones.  Let $p>1$. Recall from Theorem \ref{thm:weak nodal domain} that $\mathfrak{W}(f_1)=1$, where $f_1$ is the eigenfunction corresponding to the first variational eigenvalue $\lambda_1$. Let $f_2$ be an eigenfunction corresponding to the second variational eigenvalue $\lambda_2$. When the graph is connected and balanced, we have by \cite[Theorem 4.1]{DPT21} and Theorem \ref{thm:weak nodal domain} that $1<\mathfrak{W}(f)\leq 2$, and hence, $\mathfrak{W}(f)=2$. However, this is not always true for unbalanced signed graphs. Indeed, the first variational eigenvalue $\lambda_1$ of $\Delta_p^\sigma$ on an unbalanced signed graph can have high multiplicity. Therefore, it can happen that $\mathfrak{W}(f_2)=1$. The following example tells that, even if $\lambda_1$ has multiplicity $1$, it is still possible that $\mathfrak{W}(f_2)=1$.
\begin{example}\label{ex:weak}
   Let $p=2$. Consider the complete graph $K_7$ with the signature $\sigma\equiv-1$. Define the symmetric matrix $A$ where $A_{ij}=1$ for any $i\neq j$ and $A_{ii}=i$ for $i=1,\ldots,7$. By construction, $A$ is compatible with $(K_7,\sigma)$. It is direct to check that every eigenvalue has multiplicity $1$ and the number of weak nodal domains of the second eigenfunction is $1$.
\end{example}

\section{Cheeger inequalities related to nodal domains}
\label{sec:Cheeger}

In this section, we assume the potential function $\kappa=0$. Let us first introduce the following notations. For any subsets $V_1,V_2\subset V$, we denote by 
\begin{equation*}
    |E^{\pm }(V_1,V_2)|:=\sum_{x\in V_1}\sum_{\substack{y\in V_2\\ \sigma_{xy}=\pm 1}}w_{xy}.
\end{equation*}
When $V_1=V_2$, we write $|E^{\pm }(V_1)|=|E^{\pm }(V_1,V_1)|$ for short. We further have the following notations for boundary measure and volume:
\begin{equation*}
   |\partial V_1|:=\sum_{x\in V_1}\sum_{y\not\in V_1}w_{xy},\,\,\text{and}\,\,\mathrm{vol}_\mu(V_1):=\sum_{x\in V_1}\mu(x).
\end{equation*}
For ease of notation, we denote $n=|V|$.
\begin{defn}\cite[Definition 3.2]{AtayLiu}
For any integer $1\leq k\leq n$, the $k$-way signed Cheeger constant $h_k^\sigma$ of a signed graph $\Gamma=(G,\sigma)$ is defined as 
\begin{equation*}
    h_k^\sigma:=\min_{\{(V_{2i-1}, V_{2i})\}_{i=1}^k}\max_{i=1,\ldots,k}\beta^\sigma(V_{2i-1},V_{2i}),
\end{equation*}
where 
\[\beta^\sigma(V_{2i-1},V_{2i}):=\frac{2|E^+(V_{2i-1},V_{2i})|+|E^-(V_{2i-1})|+|E^-(V_{2i})|+|\partial(V_{2i-1}\cup V_{2i})|}{\mathrm{vol}_{\mu}(V_{2i-1}\cup V_{2i})},\]
and the minimum is taken over all possible $k$-sub-bipartitions, i.e., $(V_{2i-1}\cup V_{2i})\bigcap (V_{2j-1}\cup V_{2j})=\emptyset$ for any $i\neq j$, and $V_{2l-1}\cup V_{2l}\neq \emptyset$, $V_{2l-1}\cap V_{2l}=\emptyset$ for any $l$.
\end{defn}
It is direct to check the following monotonicity of the multi-way singed Cheeger constants. For the readers' convenience, we provide a proof below.
\begin{lemma}[Monotonicity]\label{lemma:monotonicity}
    For any integer $1\leq k \leq n-1$, we have $h_k^{\sigma}\leq h_{k+1}^{\sigma}$.
\end{lemma}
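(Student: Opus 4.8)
The plan is to exploit the nested $\min$–$\max$ shape of the definition of $h_k^\sigma$: passing from $k+1$ to $k$ pieces amounts to throwing one bipartite piece away, and a maximum over fewer quantities can only decrease. First I would record that, since $V$ is a finite set, there are only finitely many $(k+1)$-sub-bipartitions, so the infimum defining $h_{k+1}^\sigma$ is attained: fix a $(k+1)$-sub-bipartition $\{(V_{2i-1},V_{2i})\}_{i=1}^{k+1}$ with $h_{k+1}^\sigma=\max_{i=1,\ldots,k+1}\beta^\sigma(V_{2i-1},V_{2i})$.

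Next I would check that deleting one pair, say the last one $(V_{2k+1},V_{2k+2})$, leaves $\{(V_{2i-1},V_{2i})\}_{i=1}^{k}$, and that this truncated family is still an admissible $k$-sub-bipartition. Indeed, the pairwise disjointness $(V_{2i-1}\cup V_{2i})\cap(V_{2j-1}\cup V_{2j})=\emptyset$ for $i\ne j$ is inherited from the larger family; each $V_{2i-1}\cup V_{2i}$ is still nonempty; and $V_{2i-1}\cap V_{2i}=\emptyset$ still holds. Hence this family competes in the minimization defining $h_k^\sigma$.

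Finally, applying the definition of $h_k^\sigma$ to this admissible family and using that a maximum over a sub-collection is no larger than the maximum over the whole collection,
\begin{equation*}
h_k^\sigma\ \le\ \max_{i=1,\ldots,k}\beta^\sigma(V_{2i-1},V_{2i})\ \le\ \max_{i=1,\ldots,k+1}\beta^\sigma(V_{2i-1},V_{2i})\ =\ h_{k+1}^\sigma,
\end{equation*}
which is the claimed inequality. (If one prefers not to invoke attainment of the infimum, the same argument applies to an $\epsilon$-near-optimal $(k+1)$-sub-bipartition and then lets $\epsilon\to 0$.) I do not expect any genuine obstacle here; the only point requiring a word of justification is that dropping a pair preserves admissibility of a sub-bipartition, and that is immediate from the definition.
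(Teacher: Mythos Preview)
Your argument is correct. Since the definition of a $k$-sub-bipartition does not require the pieces to cover $V$, simply discarding one pair from an optimal $(k+1)$-sub-bipartition yields an admissible $k$-sub-bipartition, and the max over the remaining $k$ values of $\beta^\sigma$ is bounded above by the max over all $k+1$ values.

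The paper proceeds differently: instead of dropping a pair, it \emph{merges} the last two pairs into a single pair via $U_{2k-1}=V_{2k-1}\cup V_{2k+1}$ and $U_{2k}=V_{2k}\cup V_{2k+2}$, and then verifies the inequality
\[
\beta^\sigma(U_{2k-1},U_{2k})\le\max\{\beta^\sigma(V_{2k-1},V_{2k}),\beta^\sigma(V_{2k+1},V_{2k+2})\}.
\]
This extra step (essentially a mediant-type inequality for the ratio defining $\beta^\sigma$) is avoided entirely by your deletion argument. Your route is strictly shorter and uses only the min--max structure of the definition. The merging argument would become necessary if one were working with a variant where the pieces must partition all of $V$, but under the present definition your approach is the more economical one.
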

\begin{proof}
    Let $\{(V_{2i-1},V_{2i})\}_{i=1}^{k+1}$ be a $(k+1)$-sub-bipartitions of $V$ satisfying $h_{k+1}^{\sigma}=\max_{1\leq i\leq k+1} \beta^{\sigma}(V_{2i-1},V_{2i})$. We define a new $k$-sub-bipartitions $\{(U_{2l-1},U_{2l})\}_{l=1}^{k}$ of $V$ as follows:
 \begin{equation*}
U_m=\left\{
	\begin{aligned}
	&V_m, \qquad \qquad 1\leq m \leq  2k-2,\\
	&V_{m}\cup V_{m+2}, \quad m=2k-1 \text{ or } 2k.\\
	\end{aligned}
	\right
	.
\end{equation*}
By definition, we have $\beta^{\sigma}(U_{2l-1},U_{2l})=\beta^{\sigma}(V_{2l-1},V_{2l})$ for any $1\leq l \leq k-1$. Next, by direct computation, we get
\begin{equation*}
    \beta^{\sigma}(U_{2k-1},U_{2k})\leq \max\{\beta^{\sigma}(V_{2k-1},V_{2k}),\beta^{\sigma}(V_{2k+1},V_{2k+2})\}.
\end{equation*}
So this implies
\begin{equation*}
    h_k^\sigma=\min_{\{(W_{2i-1}, W_{2i})\}_{i=1}^k}\max_{i=1,\ldots,k}\beta^\sigma(W_{2i-1},W_{2i})\leq \max_{i=1,\ldots,k}\beta^\sigma(U_{2i-1},U_{2i})\leq \max_{1\leq i\leq k+1} \beta^{\sigma}(V_{2i-1},V_{2i}) = h_{k+1}^{\sigma}.
\end{equation*}
\end{proof}
\begin{remark}
    The above signed Cheeger constants on signed graphs can be considered as an optimization of a mixture of isoperimetric constant and the so-called frustration index. The frustration index $\iota^\sigma(\Omega)$ of a subset $\Omega\subset V$ measures how far the signature on $\Omega$ is from being balanced. It is defined as 
    \begin{equation*}
        \iota^\sigma(\Omega):=\min_{\tau: \Omega\to \{\pm 1\}}\sum_{\substack{x,y\in \Omega\\x\sim y}}|\tau(x)-\sigma_{xy}\tau(y)|.
    \end{equation*}
By switching, we see $\iota^\sigma(\Omega)=0$ if and only if the signature restricting to the subgraph induced by $\Omega$ is balanced. Indeed, the $k$-th signed Cheeger inequality can be reformulated as \cite{LLPP15}
\begin{equation*}
      h_k^\sigma:=\min_{\{\Omega_i\}_{i=1}^k}\max_{i=1,\ldots,k}\frac{\iota^\sigma(\Omega_i)+|\partial \Omega_i|}{\mathrm{vol}_{\mu}(\Omega_i)}.
\end{equation*}
This can be verified using the one-to-one correspondence between the function $\tau:\Omega_i\to \{\pm 1\}$ and the bipartition $(V_{2i-1}, V_{2i})$ of $\Omega_i$ via the relation
\[V_{2i-1}:=\{x\in \Omega_i\left|\,\tau(x)=+1\}\right.,\,\,\text{and}\,\,V_{2i}:=\{x\in \Omega_i\left|\, \tau(x)=-1\}\right..\]
Notice that $h_k^\sigma$ reduces to the classical $k$-th Cheeger constant when $\Gamma=(G,\sigma)$ is balanced, since $\iota^\sigma(\Omega_i)$ vanishes for any subset $\Omega_i$.
\end{remark}

\begin{theorem}\label{thm:p-lap-C}
For any $p\ge 1$ and any $k\in\{1,\ldots, n\}$, the $k$-th variational eigenvalue  $\lambda_k(\Delta_p^\sigma)$ satisfies
$$\frac{2^{p-1}}{C^{p-1}p^p} (h_{m}^\sigma)^p\le \lambda_k(\Delta_p^\sigma)\le 2^{p-1} h_{k}^\sigma,$$
where $C:=\max_{x\in V}\frac{\sum_y w_{xy}}{\mu_x}$ and $m$ is the number of strong nodal domains of an  eigenfunction   corresponding to $\lambda_k(\Delta_p^\sigma)$.
\end{theorem}
This theorem can be regarded as a signed version of \cite[Theorem 5.1]{TudiscoHein18}, which is an extension of previous works \cite{Amghibech,Chang, HeinBuhler2009,DHJ10}. 

Before proving this theorem, we first show an elementary inequality.
\begin{lemma}\label{lemma:convex}
For any $a,b\in \mathbb{R}$, $p\geq 1$ and $\sigma_{ab}\in \{-1,1\}$, we have
\[|a-\sigma_{ab} b|^p\le 2^{p-1}\Big||a|^p\mathrm{sgn}(a)-\sigma_{ab} |b|^p\mathrm{sgn}(b)\Big|.\] 
\end{lemma}
\begin{proof}
Without loss of generality, we can assume $ab\neq 0$.
We consider the case of $\sigma_{ab}=-1$ below. The proof for the case of $\sigma_{ab}=1$ can be done similarly. 

If $ab>0$, we assume $a>0$ and $b>0$ without loss of generality. Then we get
\begin{equation*}
     |a-\sigma_{ab}b|^p =|a+b|^p.
\end{equation*}
By the convexity of  $f(x)=|x|^p$, we have $f(\frac{a+b}{2})\le \frac{1}{2}f(a)+\frac{1}{2}f(b)$, i.e., 
\begin{align*}
     |a+b|^p & \le 2^{p-1}\left(|a|^p+ |b|^p\right) \\
     &= 2^{p-1}\Big||a|^p\mathrm{sgn}(a)-\sigma_{ab} |b|^p\mathrm{sgn}(b)\Big|.
\end{align*}

If, otherwise, $ab<0$, we assume $a>0$, $b<0$, and $a=-kb$ with $k>1$ without loss of generality. Then we get
\begin{equation*}
     |a-\sigma_{ab} b|^p =|a+b|^p=|k-1|^p|b|^p.
\end{equation*}
By the convexity of the following function
\begin{equation*}
g(x)=\begin{cases}|x|^p,&\text{ if }x\geq0,\\ x,&\text{ if } x<0,\end{cases}
\end{equation*}
we have $g(\frac{k-1}{2})\leq \frac{1}{2}g(k)+\frac{1}{2}g(-1)$, i.e., $|k-1|^p\leq 2^{p-1}\left(|k|^p-1\right)$. Next, we compute
\begin{align*}
   |k-1|^p|b|^p & \le 2^{p-1}(|k^p|-1)|b|^p\\
     &= 2^{p-1}\Big||a|^p\mathrm{sgn}(a)-\sigma_{ab} |b|^p\mathrm{sgn}(b)\Big|.
\end{align*}
This completes the proof of the case $\sigma_{ab}=-1$.
\end{proof}
\begin{proof}[Proof of Theorem \ref{thm:p-lap-C}]
Observe that for any  $k$-sub-bipartitions $\{(V_{2i-1},V_{2i})\}_{i=1}^k$ of $V$ that 
\begin{equation*}
    \RQ_1^\sigma(1_{V_{2i-1}}-1_{V_{2i}})=\beta^\sigma(V_{2i-1},V_{2i}),
\end{equation*}
 where $1_{V_i}$ is the  indicator function of $V_i$. 

We first show the upper bound estimate of $\lambda_k$. By abuse of notation, we use $\{(V_{2i-1},V_{2i})\}_{i=1}^k$ for a $k$-sub-bipartitions  of $V$ that  realizes  $h_k^\sigma$, i.e., $h_k^\sigma=\max\limits_{1\le i\le k}\beta^\sigma(V_{2i-1},V_{2i})$. For any $g\in \mathrm{span}(1_{V_{1}}-1_{V_{2}},\cdots,1_{V_{2k-1}}-1_{V_{2k}})$, i.e., \[g(x)=\sum_{i=1}^kt_i(1_{V_{2i-1}}(x)-1_{V_{2i}}(x))\,\,\text{with}\,\, t_1,\cdots,t_k\in\R,\] we derive by Lemma \ref{lemma:convex} that
\begin{align*}
|g(x)-\sigma_{xy}g(y)|^p&\le 2^{p-1}\left||g(x)|^p\mathrm{sgn}(g(x))-\sigma_{xy}|g(y)|^p\mathrm{sgn}(g(y))\right| 
\\&=2^{p-1}\left| \sum_{i=1}^{k}|t_i|^p\mathrm{sgn}(t_i)\left(1_{V_{2i-1}}(x)-1_{V_{2i}}(x)-\sigma_{xy}(1_{V_{2i-1}}(y)-1_{V_{2i}}(y))\right)\right|   
\\&\le 2^{p-1} \sum_{i=1}^{k}|t_i|^p\left|1_{V_{2i-1}}(x)-1_{V_{2i}}(x)-\sigma_{xy}(1_{V_{2i-1}}(y)-1_{V_{2i}}(y))\right|.
\end{align*}
Therefore, we compute
\begin{align*}
\RQ_p^{\sigma}(g)&=\frac{\sum_{\{x,y\}\in E}w_{xy}|g(x)-\sigma_{xy} g(y)|^p}{\sum_{x\in V}\mu_x|g(x)|^p}
\\&\leq 2^{p-1}\frac{\sum_{\{x,y\}\in E}w_{xy}\sum_{i=1}^{k}|t_i|^p\left|1_{V_{2i-1}}(x)-1_{V_{2i}}(x)-\sigma_{xy}(1_{V_{2i-1}}(y)-1_{V_{2i}}(y))\right|}{\sum_{i=1}^{k}\sum_{x\in V_{2i-1}\cup V_{2i} }\mu_x|t_i|^p|1_{V_{2i-1}}(x)-1_{V_{2i}}(x)|}
\\&= 2^{p-1}\frac{\sum_{i=1}^{k}|t_i|^p\sum_{\{x,y\}\in E}w_{xy}\left|1_{V_{2i-1}}(x)-1_{V_{2i}}(x)-\sigma_{xy}(1_{V_{2i-1}}(y)-1_{V_{2i}}(y))\right| }{\sum_{i=1}^{k}|t_i|^p\sum_{x\in V}\mu_x|1_{V_{2i-1}}(x)-1_{V_{2i}}(x)| }
\\&\le 2^{p-1} \max\limits_{i=1,\cdots,k} \RQ_1(1_{V_{2i-1}}(x)-1_{V_{2i}}(x))
\\&=2^{p-1}\max\limits_{1\le i\le k}\beta^\sigma(V_{2i-1},V_{2i})\\&=2^{p-1}h_k^\sigma.
\end{align*}
By definition of the variational eigenvalue  $\lambda_k$, we obtain $\lambda_k\le 2^{p-1}h_k^\sigma$.

Next, we prove the lower bound estimate of $\lambda_k$.
Let $f$  be an eigenfunction corresponding to   $\lambda_k$, and let $V_1,\cdots,V_m$ be the strong nodal domains of $f$. By the proof of Theorem \ref{thm:nodal-signed-graph}, we have \[\RQ_p^\sigma(f_i)\le \lambda_k, \,\,i=1,\ldots,m,\] where $f_i$ equals  $f$ on $V_i$ and equals zero otherwise.

 We prove two claims.

\textbf{Claim 1.} For any $i=1,\ldots,m$, we denote by $f_i^p: V\to \mathbb{R}$ the function $x\mapsto |f_i(x)|^p\mathrm{sgn}(f_i(x))$. Then we have
\[\RQ_p^\sigma(f_i)\ge \frac{2^{p-1}}{C^{p-1}p^p} \RQ_1^\sigma(f_i^p)^p,\,\,i=1,\cdots,m.\]

 Indeed, by \cite[Lemma 3]{Amghibech}, we have
 \begin{align*}
     \left|f_i^p(x)-\sigma_{xy}f_i^p(y)\right|\leq p|f_i(x)-\sigma_{xy}f_i(y)|\left(\frac{|f_i^p(x)|+|f_{i}^p(y)|}{2}\right)^{1-\frac 1p}.
 \end{align*}
Following the proof of \cite[Lemma 5.2]{TudiscoHein18}, we obtain 
\begin{align*}
\RQ_1^\sigma(f_{i}^p)&=\frac{\sum_{\{x,y\}\in E}w_{xy}|f_{i}^p(x)-\sigma_{xy}f_{i}^p(y)|
}{\sum_{x\in V_i}\mu_x|f(x)|^p} 
\\&\leq p\frac{\sum_{\{x,y\}\in E}w_{xy}|f_i(x)-\sigma_{xy}f_i(y)|\left(\frac{|f_i^p(x)|+|f_{i}^p(y)|}{2}\right)^{1-\frac 1p}
}{\sum_{x\in V_i}\mu_x|f(x)|^p} 
\\&\leq p\frac{\left(\sum_{\{x,y\}\in E}w_{xy}|f_i(x)-\sigma_{xy}f_i(y)|^p\right)^{\frac{1}{p}}\left(\sum_{\{x,y\}\in E}w_{xy}\frac{|f_i^p(x)|+|f_{i}^p(y)|}{2}\right)^{1-\frac{1}{p}}
}{\sum_{x\in V_i}\mu_x|f(x)|^p} 
\\&\le p\left(\frac{\sum_{\{x,y\}\in E}w_{xy}|f_{i}(x)-\sigma_{xy}f_{i}(y)|^p
}{\sum_{x\in V_i}\mu_x|f(x)|^p}\right)^{\frac1p}\left(\frac C2\right)^{1-\frac1p}
\\&=p\left(\RQ^\sigma_p(f_{i})\right)^{\frac1p}\left(\frac C2\right)^{1-\frac1p}.
\end{align*}
This proves Claim 1.

\textbf{Claim 2.} There exist $U_{2i-1}\sqcup U_{2i}\subset V_{i}$, $i=1,\cdots,m$, such that
\[\RQ_1^\sigma(f_{i}^p)\ge \RQ_1^\sigma(1_{U_{2i-1}}-1_{U_{2i}})=\beta^\sigma(U_{2i-1},U_{2i}).\]

For any $t\geq 0$, define $V_\pm^t(f):=\left\{x\in V:\pm f(x)>t^{\frac1p}\right\}$ and a function $\hat{f}^t: V\to \mathbb{R}$ as follows
\begin{equation*}
\hat{f}^t(x)=\begin{cases}1,&\text{if }f(x)>t^{\frac{1}{p}},\\ -1,&\text{if } f(x)<-t^{\frac{1}{p}},\\
0, &\text{otherwise.}\end{cases}
\end{equation*}
Then, we have
$$\int_0^\infty\sum_{x\in V_i}\mu_x|\hat{f}^t(x)|^pdt=\sum_{x\in V_i}\mu_x\int_0^\infty|\hat{f}^t(x)|^pdt=\sum_{x\in V_i}\mu_x\int_0^{|f(x)|^p}1dt=\sum_{x\in V_i}\mu_x|f(x)|^p,$$
and 
\begin{align*}
\int_0^{\infty}\left|\hat{f}^t_i(x)-\sigma_{xy}\hat{f}^t_i(y)\right|dt=|f_i^p(x)-\sigma_{xy}f_i^p(y)|,\, 
\text{\,for any\,} \{x,y\}\in E.
\end{align*}
Note that the function $f_i^p$ is defined as in Claim 1.
So by direct calculation, we have
\begin{align*}
    \int_0^\infty \sum_{\{x,y\}\in E}w_{xy}\left|\hat{f}^t_{i}(x)-\sigma_{xy}\hat{f}^t_{i}(y)\right|dt&=\sum_{\{xy\}\in E}w_{xy}\int_0^\infty \left|\hat{f^t_{i}}(x)-\sigma_{xy}\hat{f^t_{i}}(y)\right|dt
    \\&= \sum_{\{x,y\}\in E}w_{xy}\left|f_{i}^p(x)-\sigma_{xy}f_{i}^p(y)\right|.
\end{align*}
Therefore, there exists $t_0\ge0$ such that 
\begin{align*}
\RQ_1^\sigma(f_{i}^p)&=\frac{\sum_{\{x,y\}\in E}w_{xy}|f_{i}^p(x)-\sigma_{xy}f_{i}^p(y)|}{\sum_{x\in V}\mu_x|f_{i}(x)|^p}
\\&=\frac{\int_0^\infty \sum_{\{x,y\}\in E}w_{xy}\left|\hat{f}^t_i(x)-\sigma_{xy}\hat{f}^t_i(y)\right|dt}{\int_0^\infty\sum_{x\in V_i}\mu_x|\hat{f}^t(x)|^pdt}
\\&\ge \frac{\sum_{\{x,y\}\in E}w_{xy}\left|\hat{f}^{t_0}_{i}(x)-\sigma_{xy}\hat{f}^{t_0}_{i}(y)\right|}{\sum_{x\in V}\mu_x|\hat{f}^{t_0}_{i}(x)|^p}
\\&=\RQ_1^\sigma(\hat{f}^{t_0}_{i})=
\RQ_1^\sigma(1_{U_{2i-1}}-1_{U_{2i}})=\beta^\sigma(U_{2i-1},U_{2i}),
\end{align*}
where $U_{2i-1}:=V_+^{t_0}(f_{i})$ and  $U_{2i}:=V_-^{t_0}(f_{i})$. This completes the proof of Claim 2. 

Combining the above two claims, we  get \[\frac{2^{p-1}}{C^{p-1}p^p}(\beta^\sigma(U_{2i-1},U_{2i}))^p\le \lambda_k,\,\,\text{for}\,\, i=1,\cdots,m.\]  In consequence, $\frac{2^{p-1}}{C^{p-1}p^p} (h_{m}^\sigma)^p\le \lambda_k$. The proof is completed.
\end{proof}


It was proved in \cite[Proposition 3.2]{AtayLiu} that  $h_1^\sigma=\cdots=h_k^\sigma=0<h_{k+1}^\sigma$  if and only if $\Gamma=(G,\sigma)$ has exactly $k$  balanced connected components. Combining Theorem \ref{thm:p-lap-C} with \cite[Proposition 3.2]{AtayLiu}, we derive the proposition below.  

\begin{pro}\label{pro:4.1}
For any $p\geq 1$ and any $k\in\{0,1,\ldots,n\}$, a signed graph $\Gamma$ has exactly $k$ balanced connected components if and only if the variational eigenvalues of the $p$-Laplacian satisfy  \[\lambda_1(\Delta_p^{\sigma})=\cdots=\lambda_k(\Delta_p^{\sigma})=0<\lambda_{k+1}(\Delta_p^{\sigma}).\]
 Moreover, suppose $\Gamma$ has $k+l$ connected components denoted by  $\Gamma_1,\ldots,\Gamma_{k+l}$, in which   $\Gamma_1,\ldots,\Gamma_{k}$ are balanced,  while  $\Gamma_{k+1},\ldots,\Gamma_{k+l}$ are not  balanced. Then, the smallest positive eigenvalue of the $p$--Laplacian coincides with the $(k+1)$-th variational eigenvalue, which can be expressed as follows
 \begin{equation}\label{eq:k+1-variational}
  \lambda_{k+1}(\Delta_p^{\sigma})=\min\left\{\min\limits_{i=1,\cdots,k}\lambda_{2}(\Delta_p^{\sigma}|_{\Gamma_i}),\min\limits_{j=k+1,\cdots,k+l}\lambda_{1}(\Delta_p^{\sigma}|_{\Gamma_j})\right\},
 \end{equation}
  where $\lambda_{s}(\Delta_p^{\sigma}|_{\Gamma_i})$ indicates the $s$-th variational eigenvalue of the $p$-Laplacian restricted on $\Gamma_i$.
\end{pro}

\begin{proof}
We first assume $\Gamma$ has exactly $k$  balanced connected components. Then by \cite[Proposition 3.2]{AtayLiu}, we have $h_1^\sigma=\cdots=h_k^\sigma=0<h_{k+1}^\sigma$. By Theorem \ref{thm:p-lap-C}, we have $\lambda_k(\Delta_p^{\sigma})\le 2^{p-1}h_k^\sigma=0$. Since $0\le\lambda_1(\Delta_p^{\sigma})\le\cdots\le\lambda_k(\Delta_p^{\sigma})$, they are all zero.
On the other hand, according to \cite[Theorem 2.1]{JZ21-PM}, the smallest positive eigenvalue of $\Delta_p^\sigma$ on $\Gamma$ is $\lambda_{k+1}(\Delta_p^{\sigma})$. So we have $\lambda_{k+1}(\Delta_p^{\sigma})>0$. Conversely, we assume that $\lambda_1(\Delta_p^{\sigma})=\cdots=\lambda_k(\Delta_p^{\sigma})=0<\lambda_{k+1}(\Delta_p^{\sigma})$. Denote by $m$ the number of balanced connected components of $\Gamma$. Along the same line of the above arguments, we derive that \[\lambda_1(\Delta_p^{\sigma})=\cdots=\lambda_m(\Delta_p^{\sigma})=0<\lambda_{m+1}(\Delta_p^{\sigma}).\]
Comparing with our assumption, we have $m=k$.

Next, we prove \eqref{eq:k+1-variational}.
It is direct to check that the eigenvalue of $\Delta_p^\sigma$ on $\Gamma$ is the multiset-sum of  the  eigenvalue of  $\Delta_p^\sigma$ on $\Gamma_i$ for $i=1,\cdots,k+l$, i.e., 
\[\{ \lambda: \lambda \text{ is an eigenvalue of $\Delta_p^\sigma$ on $\Gamma$}\}=\oplus_{i=1}^{k+l}\{\lambda:\lambda\text{ is an eigenvalue of $\Delta_p^\sigma$ on $\Gamma_i$}\}.\] 
Therefore, the smallest positive eigenvalue of $\Delta_1^\sigma$ on $\Gamma$ coincides with 
 $$\min\limits_{i=1,\cdots,k+l}\,\{\text{the smallest positive eigenvalue of } \Delta_p^\sigma\text{ on } \Gamma_i\}.$$
 Noticing the result from \cite[Theorem 2.1]{JZ21-PM}, this completes the proof of \eqref{eq:k+1-variational}.
\end{proof}

For particular cases, the variational eigenvalues of the $1$-Laplacian might coincide with the signed Cheeger constants.
\begin{cor}\label{cor:cheeger}
   For any signed graph $\Gamma=(G,\sigma)$, we have $\lambda_1(\Delta_1^{\sigma})=h_1^{\sigma}$. Moreover, if $\Gamma$ is balanced, we have $\lambda_2(\Delta_1^{\sigma})=h_2^{\sigma}$.
\end{cor}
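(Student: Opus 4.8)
The plan is to read off both equalities from Theorem~\ref{thm:p-lap-C} with $p=1$: there $2^{p-1}=C^{p-1}=p^p=1$, so the two-sided bound becomes $h_m^\sigma\le\lambda_k\le h_k^\sigma$ for every eigenfunction $f$ of $\lambda_k$, with $m=\mathfrak{S}(f)$. For $\lambda_1(\Delta_1^\sigma)=h_1^\sigma$, take any eigenfunction $f_1$ of $\lambda_1$; it is nonzero, so $m:=\mathfrak{S}(f_1)\ge1$, and the monotonicity Lemma~\ref{lemma:monotonicity} gives $h_1^\sigma\le h_m^\sigma$. Combining, $h_1^\sigma\le h_m^\sigma\le\lambda_1\le h_1^\sigma$, hence $\lambda_1=h_1^\sigma$.

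For the balanced case I would first switch $\sigma$ to the all-$(+1)$ signature. This leaves the variational eigenvalues fixed by Proposition~\ref{pro:switching-spectra}, and it leaves every $h_k^\sigma$ fixed because in its frustration-index reformulation (displayed above) the frustration index, the boundary measures $|\partial\Omega|$ and the volumes $\mathrm{vol}_\mu(\Omega)$ are all switching-invariant. So assume $\sigma\equiv+1$, that is, $\Delta_1^\sigma$ is the usual graph $1$-Laplacian (with $\kappa=0$). Theorem~\ref{thm:p-lap-C} with $k=2$ already gives $\lambda_2\le h_2^\sigma$. For the reverse inequality, pick an eigenfunction $f$ of $\lambda_2$ whose support is minimal; by Lemma~\ref{lem:localization-1} (or by the $p=1$ part of Theorem~\ref{thm:min nodal domain}) $f=t(1_A-1_B)$ with $A\cap B=\varnothing$ and $A\cup B$ its unique strong nodal domain. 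When $\sigma\equiv+1$ a strong nodal domain path cannot join two vertices on which $f$ takes opposite signs, so the presence of a single strong nodal domain forces $A=\varnothing$ or $B=\varnothing$; after replacing $f$ by $-f$ and rescaling (both operations preserve eigenfunctions of $\Delta_1^\sigma$) we may take $f=1_A$ with $\varnothing\ne A\subsetneq V$, where $A\ne V$ because otherwise $\lambda_2=\RQ_1^\sigma(1_V)=0$, contradicting $h_2^\sigma\le\lambda_2$ and the positivity of $h_2^\sigma$ on a connected graph. A direct computation then gives $\lambda_2=\RQ_1^\sigma(1_A)=|\partial A|/\mathrm{vol}_\mu(A)=\beta^\sigma(A,\varnothing)$.

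The decisive point is the volume inequality $\mathrm{vol}_\mu(A)\le\mathrm{vol}_\mu(V\setminus A)$. Granting it, $\beta^\sigma(V\setminus A,\varnothing)=|\partial A|/\mathrm{vol}_\mu(V\setminus A)\le|\partial A|/\mathrm{vol}_\mu(A)=\lambda_2$, so the pair $(A,\varnothing),(V\setminus A,\varnothing)$ is a $2$-sub-bipartition certifying $h_2^\sigma\le\max\{\beta^\sigma(A,\varnothing),\beta^\sigma(V\setminus A,\varnothing)\}=\lambda_2$; with $\lambda_2\le h_2^\sigma$ this yields $\lambda_2(\Delta_1^\sigma)=h_2^\sigma$. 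To prove the volume inequality I would use the eigen-inclusion for $f=1_A$: there exist $z_{xy}\in\mathrm{Sgn}(1_A(x)-1_A(y))$ with $z_{xy}=-z_{yx}$ and $z_x\in\mathrm{Sgn}(1_A(x))$ such that $\sum_{y}w_{xy}z_{xy}=\lambda_2\mu_xz_x$ for all $x$. Summing over $x\in V$ annihilates the left-hand side by the antisymmetry $z_{xy}=-z_{yx}$, giving $\sum_x\mu_xz_x=0$; since $z_x=1$ for $x\in A$ and $z_x\ge-1$ for $x\notin A$, this forces $\mathrm{vol}_\mu(A)=-\sum_{x\notin A}\mu_xz_x\le\mathrm{vol}_\mu(V\setminus A)$. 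I expect this passage to be the main obstacle, as it requires careful handling of the set-valued operator $\Delta_1^\sigma$ and of the exact form (Lemma~\ref{lem:localization-1}) of minimal-support eigenfunctions; everything else is routine bookkeeping with Theorem~\ref{thm:p-lap-C} and Lemma~\ref{lemma:monotonicity}.
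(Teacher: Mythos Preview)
Your argument for $\lambda_1(\Delta_1^\sigma)=h_1^\sigma$ is identical to the paper's.

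For the balanced case your route is genuinely different. The paper applies Theorem~\ref{thm:p-lap-C} (with $p=1$, $k=2$) to an \emph{arbitrary} eigenfunction $f_2$ of $\lambda_2$, obtaining $h_{m'}^\sigma\le\lambda_2\le h_2^\sigma$ with $m'=\mathfrak{S}(f_2)$, and then invokes \cite[Theorem~4.1]{DPT21} (a Perron--Frobenius type statement) to conclude $m'\ge 2$; the sandwich closes by Lemma~\ref{lemma:monotonicity}. You instead choose a \emph{minimal-support} eigenfunction---which by Lemma~\ref{lem:localization-1} has $\mathfrak{S}=1$, so the same sandwich is useless---and recover the lower bound by hand: identify $f=1_A$, prove $\mathrm{vol}_\mu(A)\le\mathrm{vol}_\mu(V\setminus A)$ from the eigen-inclusion, and exhibit the $2$-sub-bipartition $(A,\varnothing),(V\setminus A,\varnothing)$. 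Your approach is longer but more self-contained (no appeal to \cite{DPT21}) and produces an explicit Cheeger cut realizing $h_2^\sigma$; the paper's approach is a two-line application of external results.

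There is one genuine slip. Your justification of $A\ne V$ is circular: you invoke ``$h_2^\sigma\le\lambda_2$'', which is exactly the inequality under construction. What you actually need is $\lambda_2>0$, and this requires an independent argument---e.g.\ that on a connected graph with $\sigma\equiv+1$ every eigenfunction of eigenvalue $0$ satisfies $\RQ_1^\sigma(f)=0$ and is therefore constant, so the eigenspace of $0$ has Krasnoselskii genus $1$ and hence $\lambda_2>\lambda_1=0$; alternatively cite Proposition~\ref{pro:first-positive} with $k=1$. This matters twice: once to rule out $A=V$, and once in your volume-inequality step, where dividing $\lambda_2\sum_x\mu_xz_x=0$ by $\lambda_2$ needs $\lambda_2\ne 0$. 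With that patch your argument is correct.
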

\begin{proof}
    Let $f_1$ be an eigenfunction corresponding to $\lambda_1(\Delta_1^{\sigma})$. Setting $p=1$ and $k=1$ in Theorem \ref{thm:p-lap-C} leads to $h_{m}^{\sigma}\leq \lambda_1(\Delta_1^{\sigma})\leq h_1^{\sigma}$, where $m=\mathfrak{S}(f_1)$. Since $m\geq 1$, we have $h_1^{\sigma}\leq h_m^{\sigma}$. This implies $\lambda_1(\Delta_1^{\sigma})=h_1^{\sigma}$.
    When $\Gamma$ is balanced,
    the identity $\lambda_2(\Delta_1^{\sigma})=h_2^{\sigma}$ follows directly from Proposition \ref{pro:switching-spectra} and \cite[Theorem 5.15]{Chang}.
\end{proof}

As a consequence of Proposition \ref{pro:4.1} and Corollary \ref{cor:cheeger}, we have the following expression of the first positive eigenvalue of the $1$-Laplacian. 

\begin{pro}\label{pro:first-positive}
Suppose a signed graph $\Gamma$ has $k+l$ connected components denoted by  $\Gamma_1,\ldots,\Gamma_{k+l}$, in which   $\Gamma_1,\ldots,\Gamma_{k}$ are balanced,  while  $\Gamma_{k+1},\ldots,\Gamma_{k+l}$ are not  balanced. Then, the smallest positive eigenvalue of the 1-Laplacian $\lambda_{k+1}(\Delta_1^\sigma)$ can be expressed via signed Cheeger constants as follows
 \begin{equation}\label{eq:cheeger}
     \lambda_{k+1}(\Delta_1^{\sigma})=h_{k+1}^\sigma(\Gamma)=\min\left\{\min\limits_{i=1,\cdots,k}h_2^\sigma(\Gamma_i),\min\limits_{j=k+1,\cdots,k+l}h_1^\sigma(\Gamma_j)\right\}.
 \end{equation}
\end{pro}

\begin{proof}
Combining Corollary \ref{cor:cheeger} with \eqref{eq:k+1-variational}, we have
\begin{equation}\label{eq:11}
\lambda_{k+1}(\Delta_1^\sigma)=\min\left\{\min\limits_{i=1,\cdots,k}h_2^\sigma(\Gamma_i),\min\limits_{j=k+1,\cdots,k+l}h_1^\sigma(\Gamma_j)\right\}.    
\end{equation}
Since the smallest positive eigenvalue of $\Delta_1^\sigma$ on $\Gamma$ is $\lambda_{k+1}(\Delta_1^\sigma)$ and  it is direct to check by definition that the quantity \eqref{eq:11} agrees with $h_{k+1}^\sigma(\Gamma)$, the proof of  \eqref{eq:cheeger} is completed.
\end{proof}

By the above results, the smallest positive eigenvalue of the $1$-Laplacian must be some multi-way signed Cheeger constant in Atay-Liu's sense. However, their multiplicities may not coincide. We show an example below.

\begin{example}\label{exam:K5}
Consider the complete graph $K_5$ with $\sigma\equiv+1$. It is direct to check that $h_1^\sigma=0$, $h_2^\sigma=\frac34$ and  $h_3^\sigma=h_4^\sigma=h_5^\sigma=1$. 
Furthermore, by the calculations in  \cite[Section 6.3]{Chang} and \cite[Proposition 4.1]{Zhang}, we have $\lambda_1(\Delta_1^\sigma)=0$, $\lambda_2(\Delta_1^\sigma)=\lambda_3(\Delta_1^\sigma)=\frac34$, and  $\lambda_4(\Delta_1^\sigma)=\lambda_5(\Delta_1^\sigma)=1$. Thus, the multiplicity of the smallest positive  eigenvalue doesn't agree with the  multiplicity of  the multi-way signed Cheeger constant $h_2^\sigma$.
\end{example}

\begin{cor}
If $\mathfrak{S}(f)=k$ for some eigenfunction  $f$ corresponding to $\lambda_1(\Delta_1^\sigma)$ or $\lambda_2(\Delta_1^\sigma)$ or the smallest positive eigenvalue of $\Delta_1^\sigma$, then we have $\lambda_i(\Delta_1^\sigma)=h_i^\sigma$,  $i=1,\cdots,k$. 
\end{cor}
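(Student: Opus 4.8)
The plan is to chain the two-sided bound of Theorem \ref{thm:p-lap-C} at $p=1$ with the monotonicity of the Cheeger constants (Lemma \ref{lemma:monotonicity}) and of the variational eigenvalues. Write $\lambda$ for the eigenvalue of $\Delta_1^\sigma$ that $f$ corresponds to. First I would record the unconditional upper bound: the right-hand inequality of Theorem \ref{thm:p-lap-C} with $p=1$ and dummy index $i$ gives $\lambda_i\le h_i^\sigma$ for every $i$ (this half is purely variational and requires no eigenfunction). Second, I would use the lower bound coming from $f$ itself: since $f$ is an eigenfunction attached to $\lambda$ with $\mathfrak{S}(f)=k$, the left-hand inequality of Theorem \ref{thm:p-lap-C} at $p=1$ yields $h_k^\sigma\le\lambda$.

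Next I would identify, in each of the three allowed cases, the variational index $j$ with $\lambda=\lambda_j$, together with the (case-dependent) fact that $\lambda_i=h_i^\sigma$ already holds for all $1\le i<j$. For $\lambda=\lambda_1$ take $j=1$, and there is nothing to check. For $\lambda=\lambda_2$ take $j=2$; here one needs $\lambda_1=h_1^\sigma$, which follows by applying the same two bounds to any eigenfunction $f_1$ of $\lambda_1$: $h_1^\sigma\le h_{\mathfrak{S}(f_1)}^\sigma\le\lambda_1\le h_1^\sigma$, using $\mathfrak{S}(f_1)\ge1$ and Lemma \ref{lemma:monotonicity}. For $\lambda$ equal to the smallest positive eigenvalue, Proposition \ref{pro:first-positive} gives $\lambda=\lambda_{t+1}$ with $t$ the number of balanced components of $\Gamma$ and $\lambda_1=\cdots=\lambda_t=0$; combined with $h_1^\sigma=\cdots=h_t^\sigma=0$ from \cite[Proposition 3.2]{AtayLiu}, this gives $\lambda_i=h_i^\sigma=0$ for $i\le t$, so take $j=t+1$ (if $\Gamma$ has no balanced component this reduces to the first case).

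With $j$ fixed, the final step is the chain: for $j\le i\le k$, monotonicity of the variational spectrum gives $\lambda_i\ge\lambda_j=\lambda\ge h_k^\sigma\ge h_i^\sigma$ (the last step by Lemma \ref{lemma:monotonicity} since $i\le k$), while the first step gives $\lambda_i\le h_i^\sigma$; hence $\lambda_i=h_i^\sigma$. Together with $\lambda_i=h_i^\sigma$ for $1\le i<j$ this covers all $1\le i\le k$ (if $j>k$ the range $j\le i\le k$ is empty and the case-dependent fact already handles every index). I do not expect a genuine obstacle: all the content sits in Theorem \ref{thm:p-lap-C}. The only points needing care are (i) noticing that $\lambda_i\le h_i^\sigma$ is unconditional, so it is available for indices other than $k$; (ii) the $\lambda_2$ case, where $\lambda_1=h_1^\sigma$ must be checked separately; and (iii) the ``smallest positive eigenvalue'' case, where Proposition \ref{pro:first-positive} and \cite[Proposition 3.2]{AtayLiu} are needed to pin down $j=t+1$ and the vanishing of $\lambda_i$ and $h_i^\sigma$ for $i\le t$.
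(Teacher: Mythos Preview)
Your proposal is correct and follows essentially the same route as the paper: both proofs combine the unconditional upper bound $\lambda_i\le h_i^\sigma$ from Theorem~\ref{thm:p-lap-C} at $p=1$ with the lower bound $h_k^\sigma\le\lambda_j$ coming from $f$, then use monotonicity of both sequences to squeeze out equality on the range $j\le i\le k$, handling the three cases for $j$ separately via Corollary~\ref{cor:cheeger} and Proposition~\ref{pro:first-positive}. The only cosmetic difference is that the paper packages the squeezing step into an explicit intermediate Claim (if $h_j^\sigma\le\lambda_i$ with $j\ge i$ then $\lambda_i=\cdots=\lambda_j=h_i^\sigma=\cdots=h_j^\sigma$), whereas you write out the chain of inequalities directly.
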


\begin{proof}
We need the following simple observation: If $h_j^\sigma\le \lambda_i(\Delta_1^\sigma)$ for some $j\ge i$, then 
\begin{equation}\label{eq:observation}
\lambda_i(\Delta_1^\sigma)=\lambda_{i+1}(\Delta_1^\sigma)=\cdots=\lambda_j(\Delta_1^\sigma)= h_i^\sigma=h_{i+1}^\sigma=\cdots=h_j^\sigma.
\end{equation}
In fact, Theorem \ref{thm:p-lap-C} tells that $\lambda_j(\Delta_1^\sigma)\le h_j^\sigma$ and $ \lambda_i(\Delta_1^\sigma)\le h_i^\sigma$. Since $j\ge i$, we have $h_i^\sigma\le h_j^\sigma$ and $ \lambda_i(\Delta_1^\sigma)\le \lambda_j(\Delta_1^\sigma)$. Together with the assumption $h_j^\sigma\le \lambda_i(\Delta_1^\sigma)$, we obtain that 
\[h_j^\sigma\leq \lambda_i(\Delta_1^\sigma)\leq  \lambda_j(\Delta_1^\sigma)\leq h_j^\sigma,\,\,\text{and}\,\,h_j^\sigma\leq \lambda_i(\Delta_1^\sigma)\leq  h_i^\sigma\leq h_j^\sigma,\]
which implies immediately that $\lambda_i(\Delta_1^\sigma)= \lambda_j(\Delta_1^\sigma)=h_i^\sigma= h_j^\sigma$ and hence (\ref{eq:observation}).

Now, we move on to the  proof of the corollary. Recall from Corollary \ref{cor:cheeger} that the identity $\lambda_1(\Delta_1^\sigma)=h_1^\sigma$ always holds. So, it remains to show the case that $k\geq 2$.

If  $f$ is  an  eigenfunction  corresponding to $\lambda_1(\Delta_1^\sigma)$, then Theorem \ref{thm:p-lap-C} yields  $h_k^\sigma\le\lambda_1(\Delta_1^\sigma)$,  and the above observation implies  that $\lambda_1(\Delta_1^\sigma)=\cdots=\lambda_k(\Delta_1^\sigma)= h_1^\sigma=\cdots=h_k^\sigma$. 

If  $f$ is  an  eigenfunction  corresponding to $\lambda_2(\Delta_1^\sigma)$, then we similarly have $\lambda_2(\Delta_1^\sigma)=\cdots=\lambda_k(\Delta_1^\sigma)= h_2^\sigma=\cdots=h_k^\sigma$. The case $\lambda_1(\Delta_1^\sigma)=h_1^\sigma$ holds universally.

Suppose  $\lambda_s(\Delta_1^\sigma)$ is the smallest positive eigenvalue, and $f$ is an eigenfunction corresponding to $\lambda_s(\Delta_1^\sigma)$ with $\mathfrak{S}(f)=k$.  Without loss of generality,  we further assume that $\lambda_{s-1}(\Delta_1^\sigma)=0$.  By Proposition \ref{pro:first-positive}, we have $\lambda_{1}(\Delta_1^\sigma)=\cdots=\lambda_{s-1}(\Delta_1^\sigma)=h_1^\sigma=\cdots=h_{s-1}^\sigma=0$.  When $k\le s-1$,    nothing needs to be  proved. 
Suppose $k\ge s$. Then we apply Theorem \ref{thm:p-lap-C} and the above observation to derive $\lambda_s(\Delta_1^\sigma)=\cdots=\lambda_k(\Delta_1^\sigma)= h_s^\sigma=\cdots=h_k^\sigma>0$.
\end{proof}

\section{Perron-Frobenius theorem on antibalanced graphs}
\label{sec:Perron}
As is well known, the Perron-Frobenius theorem implies for any connected graph that, the first eigenvalue of its linear Laplacian (i.e., $p$-Laplacian with $p=2$) is simple and the corresponding eigenfunction can be taken to be positive on every vertex. For the $p$-Laplacian on graphs with $p>1$, the same property has been shown in \cite[Theorem 4.1]{DPT21} and \cite[Theorem 1.1]{HW20}. For the case of  connected antibalanced signed graphs, it was shown in \cite[Theorem 3.13]{GL21+} that the largest eigenvalue of $\Delta_p^\sigma$ with $p=2$ is simple and the corresponding eigenfunction can be taken to be positive  on every vertex. This can be considered as a Perron-Frobenius theorem for Laplacians on antibalanced signed graphs.
In this section, we prove a nonlinear version of \cite[Theorem 3.13]{GL21+} for $p$-Laplacians on antibalanced signed graphs with $p>1$ by using the estimate in Theorem \ref{thm:weak nodal domain} for the number $\overline{\mathfrak{W}}(f)$ of anti-weak nodal domains.

\begin{theorem}\label{thm:pr}
 Assume that $p>1$. Let $\Gamma=(G,\sigma)$  be a connected signed graph where  $\sigma\equiv -1$, $G=(V,E)$ and $|V|=n$. For any eigenfunction $f$ corresponding to the $n$-th variational eigenvalue $\lambda_n$ of $\Delta_p^\sigma$, we have the following properties:
	\begin{itemize}
          \item [(i)] $f$ is either strictly positive or strictly negative, i.e., either $f(x)>0$ for any $x\in V$ or $f(x)<0$ for any $x\in V$;
          \item [(ii)] For any other eigenfunction $g$ corresponding to $\lambda_n$, there exists a constant $c\in \mathbb{R}\setminus \{0\}$ such that $g=cf$;	   
		\item [(iii)]If $g$ is an eigenfunction corresponding to an eigenvalue $\lambda$, and $g(x)>0$ for any $x\in V$ or $g(x)<0$ for any $x\in V$, then $\lambda=\lambda_{n}$.
	\end{itemize}
\end{theorem}
Let us remark that the Perron-Frobenius theorem above does not hold for the case of $p=1$. Indeed, according to Theorem \ref{thm:min nodal domain}, there exists an eigenfunction $f$ corresponding to $\lambda_n$ of $\Delta_1^\sigma$ such that $\mathfrak{S}(f)=1$. However, if Theorem \ref{thm:pr} were true for $p=1$, we would have $\mathfrak{S}(f)=n$ for any eigenfunction corresponding to $\lambda_n$ of $\Delta_1^\sigma$, which is a contradiction.

\begin{proof}[Proof of Theorem \ref{thm:pr}]
(i) Since $\lambda_{n}$ is the $n$-th variational eigenvalue, Theorem \ref{thm:weak nodal domain} implies $\overline{\mathfrak{W}}(f)\leq 1$. By definition of weak nodal domains, we have  $f(x)\geq 0$ for any $x\in V$ or  $f(x)\leq 0$ for any $x\in V$. We can assume $f(x)\geq 0$ for any $x\in V$, since otherwise, we can consider the eigenfunction $-f$. 

If $f(x)=0$ for some $x\in V$, we have by the eigen-equation that
\begin{equation*}
	\Delta_p^{\sigma}f(x)=\sum_{y\sim x}w_{xy}\Phi_p\left(f(x)-\sigma_{xy}f(y)\right)+\kappa_x\Phi_p(f(x))=\lambda_n\mu_{x}\Phi_p(f(x))=0.
\end{equation*} 
Since $\sigma\equiv-1$,  we obtain $\sum_{y\sim x}w_{xy}\Phi_p(f(y))=0$. Because $f(y)$ is non-negative for all $y\in V$, we have $f(y)=0$ for all $y$ with $y\sim x$. By the connectedness of $G$, we have $f\equiv 0$. This contradicts to the assumption that $f$ is an eigenfunction of $\lambda_n$. Thus, we get $f(x)>0$ for any $x\in V$. 
\\

\vspace{0.05cm}

(ii) Suppose that $g$ is   an eigenfunction corresponding to $\lambda_{n}$.  Without loss of generality, we can assume $g(x)>0$ for any $x\in V$. By definition, we have for any $x\in V$,
\begin{equation}\label{eq:001}
	\sum_{y\sim x}w_{xy}\Phi_p(f(x)+f(y))=(\lambda_n\mu_{x}-\kappa_x)\Phi_p(f(x)),
\end{equation} 
\begin{equation}\label{eq:002}
		\sum_{y\sim x}w_{xy}\Phi_p(g(x)+g(y))=(\lambda_n\mu_{x}-\kappa_x)\Phi_p(g(x)).
\end{equation}
Multiplying  \eqref{eq:001}  by $f(x)-\frac{|g(x)|^p}{\Phi_p(f(x))}$, and \eqref{eq:002}  by $g(x)-\frac{|f(x)|^p}{\Phi_p(g(x))}$, we  derive
\begin{equation}\label{eq:003}
		\sum_{y\sim x}w_{xy}\Phi_p(f(x)+f(y))\left(f(x)-\frac{|g(x)|^p}{\Phi_p(f(x))}\right)=(\lambda_n\mu_{x}-\kappa_x)\left(|f(x)|^p-|g(x)|^p\right),
\end{equation}
\begin{equation}\label{eq:004}
	\sum_{y\sim x}w_{xy}\Phi_p(g(x)+g(y))\left(g(x)-\frac{|f(x)|^p}{\Phi_p(g(x))}\right)=(\lambda_n\mu_{x}-\kappa_x)\left(|g(x)|^p-|f(x)|^p\right).
\end{equation}
Summing  \eqref{eq:003} and \eqref{eq:004} over all $x\in V$, we get
\begin{equation}\label{eq:Rfg}
	R(f,g)+R(g,f)=0,
\end{equation}
where 
\begin{equation*}
	R(f,g)=\sum_{\{x,y\}\in E}w_{xy}\left(|g(x)+g(y)|^p-\Phi_p(f(x)+f(y))\left(\frac{|g(x)|^p}{\Phi_p(f(x))}+\frac{|g(y)|^p}{\Phi_p(f(y))}\right)\right).
\end{equation*}
We apply Lemma \ref{lemma:elementary} by setting $a=f(x)$, $b=f(y)$, $ta=g(x)$ and $sb=g(y)$ to derive that each summand in $R(f,g)$ is non-positive. Similarly, we have each summand in $R(g,f)$ is also non-positive. Therefore, the identity \eqref{eq:Rfg}  implies that every summand of $R(f,g)$ and $R(g,f)$ equals zero. By the equality condition \eqref{eq:lemma1=}  in Lemma \ref{lemma:elementary}, we have for any $\{x,y\}\in E$ that
\begin{equation*}
	\frac{g(x)}{g(y)}=\frac{f(x)}{f(y)}.
\end{equation*}
Since $G$ is connected, we drive that $g$ is proportional to $f$. This concludes the proof of (ii).\\

\vspace{0.05cm}

 (iii) If $g$ is an eigenfunction corresponding to $\lambda$ and $g(x)>0$ for any $x\in V$. By definition, we have 
\begin{equation}\label{eq:005}
	\sum_{y\sim x}w_{xy}\Phi_p(f(x)+f(y))=(\lambda_n\mu_{x}-\kappa_x)\Phi_p(f(x)),
\end{equation} 
\begin{equation}\label{eq:006}
	\sum_{y\sim x}w_{xy}\Phi_p(g(x)+g(y))=(\lambda\mu_{x}-\kappa_x)\Phi_p(g(x)).
\end{equation}
As above, we multiply \eqref{eq:005} by $f(x)-\frac{|g(x)|^p}{\Phi_p(f(x))}$ and \eqref{eq:006} by $g(x)-\frac{|f(x)|^p}{\Phi_p(g(x))}$, and sum them over all $x\in V$. Then, we obtain
\begin{equation}\label{eq:007}
      R(f,g)+R(g,f)=(\lambda_n-\lambda)\sum_{x\in V}\mu_{x}\left(|f(x)|^p-|g(x)|^p\right).
\end{equation}
We can choose sufficiently small $\epsilon>0$    such that $f(x)-\epsilon g(x)>0$ for any $x\in V$. So without loss of generality, we can assume $|f(x)|^p-|g(x)|^p>0$ for any $x\in V$. If $\lambda<\lambda_n$, then the right hand side of \eqref{eq:007} is strictly positive and the left hand side  of \eqref{eq:007} is non-positive. This is a contradiction. The proof of  $\lambda=\lambda_n$ is then completed.
\end{proof}
Notice that a connected bipartite graph with $\sigma\equiv 1$ is both balanced and antibalanced. Hence, our Theorem \ref{thm:pr} covers the conclusion of \cite[Theorem 4.4]{DPT21} and \cite[Theorem 1.2]{HW20}.
Next, we  use Theorem \ref{thm:pr} to derive the following results.
\begin{theorem}\label{thm:antibalanced_no}
     Let $\Gamma=(G,\sigma)$ be a connected antibalanced signed graph and $\{\lambda_{i}\}_{i=1}^n$ be the variational eigenvalues of $\Delta_p^{\sigma}$ with $p>1$. Then we have $\lambda_{n-1}<\lambda_{n}$ and there are no other eigenvalues between $\lambda_{n-1}$ and $\lambda_{n}$.
\end{theorem}
\begin{proof}
    Since $\Gamma$ is antibalanced, by Proposition \ref{pro:switching-spectra}, we can assume $\sigma\equiv-1$ without loss of generality. 
    
    We prove the theorem by contradiction. Assume that  $\lambda$ is an eigenvalue satisfying  $\lambda_{n-1}<\lambda<\lambda_n$ and $f$ is an  eigenfunction  corresponding to $\lambda$. By Theorem \ref{thm:nodal-anti-graph}, we get  $\overline{\mathfrak{S}}(f)\leq 1$. Then by definition of $\overline{\mathfrak{S}}$, we have $f\geq 0$ on every vertex or $f\leq 0$ on every vertex. We assume $f\geq 0$ on every vertex and the case that $f\leq 0$ on every vertex can be proved  similarly. If $f$ is zero on some $x\in V$, we have by the eigen-equation that  
\begin{equation*}
    \sum_{y\sim x}w_{xy}\Phi_p(f(x)+f(y))+\kappa_x\Phi_p(f(x))=\lambda\mu_{x}\Phi_p(f(x)).
\end{equation*}
So we have $  \sum_{y\sim x}w_{xy}\Phi_p(f(y))=0$.  Because $f(y)\geq 0$ for any $y\in V$, we obtain $f(y)=0$ for any $y\sim x$. By the connectedness of $\Gamma$, we have $f=0$ on all vertices, which can not happen. So $f$ is positive on all vertices. Then, we apply Theorem \ref{thm:pr} to get $\lambda=\lambda_n$, which leads to a contradiction. 
\end{proof}
 Using again the fact that a bipartite graph with $\sigma\equiv 1$ is antibalanced, we derive from Theorem \ref{thm:antibalanced_no} the following corollary. 
\begin{cor}\label{cor:5.1}
For any connected bipartite graph, there are no eigenvalues between the largest and the  second largest variational eigenvalues of the corresponding $p$-Laplacian with $p>1$.
\end{cor}

\section{Interlacing theorems}
\label{sec:interlacing}
 When one wants to understand a quantitative property of a graph, it is natural to investigate how this quantity changes under modifying the graph via deleting vertices or edges. 

 In this section, for an eigenpair $(\lambda,f)$ of $\Delta_p^\sigma$ with $p>1$, we give a way to modify a signed graph to a forest $T$ such that $(\lambda,f|_T)$ is again an eigenpair of $T$. We estimate how the eigenvalue changes in each step. This leads to a nonlinear version of the Cauchy Interlacing Theorem. The theorems in this section are signed versions of the theorems in \cite[Section 5]{DPT21}. Those interlacing theorems will be useful for the lower bound estimates of $\mathfrak{S}(f)$ in the next section.

 
\subsection*{Removing an edge}

Consider a signed graph $\Gamma=(G,\sigma)$, where $G=(V,E)$, with an edge measure $w$, a vertex weight $\mu$, and a potential function $\kappa$. Let $f\in C(V)$ be a function and $\{x_0,y_0\}\in E$ be an edge such that $f(x_0)f(y_0)\neq 0$. We define a new signed graph
\[\Gamma'=(G',\sigma')\,\, \text{where}\,\, G'=(V,E'),\,\,E':=E\setminus\{ x_0,y_0\},\,\,\text{and}\,\, \sigma_{xy}'=\sigma_{xy}\,\,\text{for any}\,\, \{x,y\} \in E',\]
with an edge measure $w'$, a vertex weight $\mu'$ and a potential function $\kappa'$ defined as follows: $w'_{xy}=w_{xy}$ for any $\{x,y\}\in E'$, $\mu_x'=\mu_x$ for any $x\in V$, and 
\begin{equation*}
\kappa_x'=\begin{cases}
		\kappa_x, &\text{if}\,\, x\neq x_0,y_0;  \\
		\kappa_x+w_{x_0y_0}\Phi_p\left(1-\sigma_{x_0y_0}\frac{f(y_0)}{f(x_0)}\right), & \text{if}\,\, x=x_0;\\
		\kappa_x+w_{x_0y_0}\Phi_p\left(1-\sigma_{x_0y_0}\frac{f(x_0)}{f(y_0)}\right), &\text{if}\,\, x=y_0.
	\end{cases}
\end{equation*}
Then, the corresponding $p$-Laplacian with $p>1$ of the new signed graph $\Gamma'$ is given by
\begin{equation}\label{eq:edge}
 \Delta_p^{\sigma'}g(x)=\sum_{y\in V:\{y,x\}\in E'}w_{xy}'\Phi_p\left(g(x)-\sigma'_{xy}g(y)\right)+\kappa'_x\Phi_p(g(x)).
\end{equation}

It is direct to check that the above choices of $w'$, $\mu'$ and $\kappa'$ lead to the following property: If $f\in C(V)$ is an eigenfunction corresponding to an eigenvalue $\lambda$ of the $p$-Laplacian $\Delta_p^\sigma$ with $p>1$, then $f$ is still an eigenfunction of $\Delta_p^{\sigma'}$ corresponding to $\lambda$. 

Let $\mathcal{R}_p^{\sigma'}$ be the Rayleigh quotient of $\Delta_p^{\sigma'}$  defined as 
\begin{equation*}
	\mathcal{R}_p^{\sigma'}(g)=\frac{\sum_{xy\in E'}w'_{xy}|g(x)-\sigma_{xy}'g(y)|^p+\sum_{x\in V}\kappa'_{x}|g(x)|^p}{\sum_{x\in V}\mu_{x}'|g(x)|^p}.
\end{equation*}

We recall the following lemma from \cite[Proposition 4.4]{Solimini}, which will be very useful in the proofs of Theorem \ref{Lemma:edge}, Lemma \ref{Lemma:node} and Theorem \ref{thm:node}.
\begin{lemma}\label{Lemma:test}
    Let $A$ be a centrally  symmetric subset in a Banach space with $\gamma(A)> k$. Let $\phi:A\to \mathbb{R}^k$ be a continuous odd map. Then we have $\gamma(\phi^{-1}(0))\geq \gamma(A)-k$.
\end{lemma}
\begin{theorem}\label{Lemma:edge} Consider a signed graph $\Gamma=(G,\sigma)$ where $G=(V,E)$ and a function $f\in C(V)$. Let $\Delta_p^{\sigma}$ be the corresponding $p$-Laplacian with $p>1$, and $\Gamma'=(G',\sigma')$, $\Delta_p^{\sigma'}$ be defined as above. Denote by $\lambda_k$ and $\eta_k$ the  $k$-th variational eigenvalues of $\Delta_p^{\sigma}$ and $\Delta_p^{\sigma'}$, respectively. Then we have:
\begin{itemize}
	\item [(i)]if $f(x_0)\sigma_{x_0y_0}f(y_0)<0$, then $\eta_{k-1}\leq \lambda_k\leq \eta_k$ for any $1<k\leq n$;
	\item [(ii)]if $f(x_0)\sigma_{x_0y_0}f(y_0)>0$, then  $\eta_k\leq \lambda_k \leq \eta_{k+1}$ for any $1\leq k< n$.
\end{itemize}
\end{theorem}
\begin{proof}
 We first assume $f(x_0)\sigma_{x_0y_0}f(y_0)<0.$ Setting $\mathcal{E}(g):=\mathcal{R}_p^{\sigma'}(g)-\mathcal{R}_p^{\sigma}(g)$ for any $g:V\to \mathbb{R}$, we compute
\begin{equation}
	\begin{aligned}
	\frac{\Vert g \Vert_p^p\mathcal{E}(g)}{w_{x_0y_0}}=-|g(x_0)-\sigma_{x_0y_0}g(y_0)|^p+\left(\frac{|g(x_0)|^p}{\Phi_p(f(x_0))}-\sigma_{x_0y_0}\frac{|g(y_0)|^p}{\Phi_p(f(y_0))}\right)\Phi_p(f(x_0)-\sigma_{x_0y_0}f(y_0)).
	\end{aligned}
\end{equation}
Applying Lemma \ref{lemma:elementary} by taking $a=f(x_0)$, $b=-\sigma_{x_0y_0}f(y_0)$, $ta=g(x_0)$, and $sb=-\sigma_{x_0y_0}g(y_0)$, we have $\mathcal{E}(g)\geq 0$, and hence
\[\mathcal{R}_p^{\sigma}(g)\leq \mathcal{R}_p^{\sigma'}(g)\,\,\text{for any}\,\, g: V\to \mathbb{R},\]
where the equality holds if and only if  $g(x_0)f(y_0)-g(y_0)f(x_0)=0$.

Let $A_k\in \mathcal{F}_k(\mathcal{S}_p)$ be a  set such that $	\lambda_k=\max_{g\in A_k}\mathcal{R}_p^{\sigma}(g).$ Define
   \begin{equation*}
   	\begin{aligned}
	\phi: A_k&\to \mathbb{R},\\
	g&\mapsto g(x_0)f(y_0)-g(y_0)f(x_0).
   	\end{aligned}
   \end{equation*} 
Observe that $\phi$ is odd. Since $k>1$ and $\gamma(A_k)\geq k$,  we use   Lemma \ref{Lemma:test} to get $\gamma(\phi^{-1}(0))\geq k-1$. Moreover, we have $\mathcal{R}_p^{\sigma}(g)=\mathcal{R}_p^{\sigma'}(g)$ for any $g\in \phi^{-1}(0)$. So we derive that
\begin{equation}
	\eta_{k-1}=\min_{A\in \mathcal{F}_{k-1}(\mathcal{S}_p)}\max_{g\in A}\mathcal{R}_p^{\sigma'}(g)\leq \max_{g\in \phi^{-1}(0)}\mathcal{R}_p^{\sigma'}(g)=\max_{g\in \phi^{-1}(0)}\mathcal{R}_p^{\sigma}(g)\leq\max_{g\in A_k}\mathcal{R}_p^{\sigma}(g)=\lambda_k.
\end{equation} 
By definition of variational eigenvalues, we  obtain
\begin{equation}
	\lambda_k=\min_{A\in \mathcal{F}_{k}(\mathcal{S}_p)}\max_{g\in A}\mathcal{R}_p^{\sigma}(g)\leq \min_{A\in \mathcal{F}_{k}(\mathcal{S}_p)}\max_{g\in A}\mathcal{R}_p^{\sigma'}(g)\leq \eta_k.
\end{equation}
This proves (i). The proof of (ii) follows similarly.
\end{proof}
\begin{remark}
	We  define $\lambda_{m}=\eta_{m}=-\infty$ for $m\leq 0$ and $\lambda_{m}=\eta_{m-1}=+\infty$ for $m>n$. Then the above theorem holds for any $k\in \mathbb{Z}$.
\end{remark}

\subsection*{Removing a node}  
Consider a signed graph $\Gamma=(G,\sigma)$ and the  corresponding $p$-Laplacian $\Delta_p^\sigma$ with $p>1$. For a given vertex $x_0\in V$, we define a new signed graph $\Gamma'=(G',\sigma')$ where $G'=(V',E')$ is the subgraph induced by $V':=V\setminus \{x_0\}$, and $\sigma'_{xy}=\sigma_{xy}$ for any $\{x,y\}\in E'$, with an edge weight $w'$, a vertex weight $\mu'$ and a potential function $\kappa'$ defined as follows: $w_{xy}'=w_{xy}$ for any $\{x,y\}\in E'$, $\mu_x'=\mu_x$ for any $x\in V'$, and $\kappa_x'=\kappa_x+w_{xx_0}$ for any $x\in V'$. 

Then, we define the corresponding $p$-Laplacian $\Delta_p^{\sigma'}$ on $\Gamma'$ as follows:
\begin{equation}\label{eq:node}
	\Delta_p^{\sigma'}g(x)=\sum_{y\in V':\{x,y\}\in E'}w_{xy}'\Phi_p(g(x)-\sigma'_{xy}g(y))+ \kappa'_x\Phi_p(g(x)),
\end{equation}

For convenience, we define two maps $\Psi:C(V)\to C(V')$ and $\psi:C(V')\to C(V)$ between the function spaces $C(V):=\{f: V\to \mathbb{R}\}$ and $C(V'):=\{f: V'\to \mathbb{R}\}$ as follows:
For any $g:V\to \mathbb{R}$, we define $(\Psi g)(x)=g(x)$ for any $x\in V'=V\setminus\{x_0\}$; For any $h:V'\to \mathbb{R}$, we define $(\psi h)(x)=h(x)$ for any $x\in V'=V\setminus\{x_0\}$ and $(\psi h)(x_0)=0$.

The reason to choose the new $w'$, $\mu'$ and $\kappa'$ as above is to ensure that, for an eigenfunction $f$ of $\Delta_p^\sigma$ corresponding to an  eigenvalue $\lambda$ such that $f(x_0)=0$, $\Psi f$ is an eigenfunction of $\Delta_p^{\sigma'}$ corresponding to the same eigenvalue $\lambda$. Indeed, we have for any $x\in V'$ that
\begin{equation*}
	\begin{aligned}
		\Delta_p^{\sigma'}(\Psi f)(x)&=\sum_{y\in V':\{x,y\}\in E'}w_{xy}'\Phi_p\left(\Psi f(x)-\sigma'_{xy}\Psi f(y)\right)+\kappa'_x\Phi_p\left(\Psi f(x)\right)\\
		&=\sum_{y\in V':\{x,y\}\in E'}w_{xy}\Phi_p(f(x)-\sigma'_{xy}f(y))+ (\kappa_x+w_{xx_0})\Phi_p(f(x))\\
		&=\sum_{y\in V':\{x,y\}\in E'}w_{xy}\Phi_p(f(x)-\sigma_{xy}f(y))+ w_{xx_0}\Phi_p(f(x)-\sigma_{xx_0}f(x_0))+ \kappa_x\Phi_p(f(x))\\
		&=\sum_{y\in V:\{x,y\}\in E}w_{xy}\Phi_p(f(x)-\sigma_{xy}f(y))+ \kappa_x\Phi_p(f(x))\\
		&=\lambda \mu_{x}\Phi_p(f(x))=\lambda \mu'_{x}\Phi_p(\Psi f(x)).
	\end{aligned}
\end{equation*}

Let  $\mathcal{R}_p^{\sigma'}$ be the Rayleigh quotient of $\Delta_p^{\sigma'}$  defined as 
\begin{equation*}
	\mathcal{R}_p^{\sigma'}(g)=\frac{\sum_{xy\in E'}w'_{xy}|g(x)-\sigma_{xy}'g(y)|^p+\sum_{x\in V'}\kappa'_{x}|g(x)|^p}{\sum_{x\in V'}\mu'_{x}|g(x)|^p}.
\end{equation*}

It is direct to check the following facts:
\begin{itemize}
\item For any $g\in C(V)$ with $g(x_0)=0$, we have $\mathcal{R}_p^{\sigma}(g)=\mathcal{R}_p^{\sigma'}(\Psi g)$;
\item For any $h\in C(V')$, we have $\mathcal{R}_p^{\sigma}(\psi h)=\mathcal{R}_p^{\sigma'}( h)$.
\end{itemize} 
\begin{lemma}\label{Lemma:node}
Consider a signed graph $\Gamma=(G,\sigma)$ where $G=(V,E)$ and a given vertex $x_0\in V$. 
Let $\Delta_p^{\sigma}$ be the corresponding $p$-Laplacian with $p>1$, and $\Gamma'=(G',\sigma)$, $\Delta_p^{\sigma'}$ be defined as above. Denote by $\lambda_k$ and $\eta_k$  the $k$-th variational eigenvalues of $\Delta_p^{\sigma}$ and $\Delta_p^{\sigma'}$, respectively. Then we have
\begin{equation*}
	\lambda_k\leq \eta_k\leq \lambda_{k+1}, \,\,\text{for any}\,\, 1\leq k\leq n-1. 
\end{equation*}
\end{lemma}
\begin{proof}
Define $\mathcal{S}_p'=\{g:V'\to \mathbb{R}\left|\,\sum_{x\in V'}\mu_x|g(x)|^p=1\right.\}$.  Let $A_k'\in \mathcal{F}_k(\mathcal{S}_p')$ be a set such that $\eta_k=\max_{g\in A_k'}\mathcal{R}_p^{\sigma'}(g)$. Define $A_k:=\psi(A_k')$. By definition, we have $A_k\in \mathcal{F}_k(\mathcal{S}_p)$, and
\begin{equation*}
	\lambda_k=\min_{A\in \mathcal{F}_{k}(\mathcal{S}_p)}\max_{g\in A}\mathcal{R}_p^{\sigma}(g)\leq \max_{g\in A_k}\mathcal{R}_p^{\sigma}(g)= \max_{g\in A_k'}\mathcal{R}_p^{\sigma'}(g)=\eta_k.
\end{equation*}
This concludes the proof of the first inequality.

Let $A_{k+1}\in \mathcal{F}_{k+1}(\mathcal{S}_p)$ be a set such that $
\lambda_{k+1}=\max_{g\in A_{k+1}}\mathcal{R}_p^{\sigma}(g)$. 
Define
\begin{equation*}
\begin{aligned}
	\phi:A_{k+1} &\to \mathbb{R} \\
      	 g&\mapsto g(x_0).
\end{aligned}
\end{equation*}
By Lemma \ref{Lemma:test}, we have $\phi^{-1}(0)\subset \mathcal{F}_k(\mathcal{S}_p)$ and $\Psi(\phi^{-1}(0))\subset \mathcal{F}_k(\mathcal{S}_p')$. So we  get
\begin{equation*}
   \eta_k= \min_{A\in \mathcal{F}_{k}(\mathcal{S}_p')}\max_{g\in A}\mathcal{R}_p^{\sigma'}(g)\leq \max_{g\in \Psi (\phi^{-1}(0))}\mathcal{R}_p^{\sigma'}(g)=\max_{g\in  \phi^{-1}(0)}\mathcal{R}_p^{\sigma}(g)\leq \max_{g\in A_{k+1}}\mathcal{R}_p^{\sigma}(g)=\lambda_{k+1}.
\end{equation*}
This concludes the proof of the second inequality.
\end{proof}
We can use Lemma \ref{Lemma:node} iteratively to get the following theorem.
\begin{theorem}\label{thm:node}
Consider a signed graph $\Gamma=(G,\sigma)$ where $G=(V,E)$, with an edge measure $w$, a vertex weight $\mu$ and a potential function $\kappa$. Given a subset $\{x_1,\ldots,x_m\}\subset V$ of $m$ vertices, we define a new signed graph $\Gamma'=(G',\sigma')$, where $G'=(V',E')$ is the subgraph induced by $V':=V\setminus\{x_1,\ldots,x_m\}$, with an edge measure $w'$, a vertex weight $\mu'$ and a potential function $\kappa'$ defined as follows: $w_{xy}'=w_{xy}$ for any $\{x,y\}\in E'$, $\mu_x'=\mu_x$ for any $x\in V'$, and \[\kappa_x'=\kappa_x+\sum_{i=1}^mw_{xx_i},\,\,\text{for any}\,\, x\in V'.\]   
Denote by $\{\lambda_i\}_{i=1}^n$ and $\{\eta_i\}_{i=1}^{n-m}$ the variational eigenvalues of the corresponding $p$-Laplacians $\Delta_p^{\sigma}$ and $\Delta_p^{\sigma'}$ with $p>1$, respectively. Then, we have  
\begin{equation*}
	\lambda_k\leq \eta_k\leq \lambda_{k+m},\,\,\text{for any}\,\, 1\leq k\leq n-m.
\end{equation*}
\end{theorem}

\section{Lower bounds of the number of strong nodal domains}
\label{sec:lower-nodal}
 In this section, we prove the lower bound estimates of the strong nodal domains. For convenience, we give the following symbols.

 Given a signed graph $\Gamma=(G,\sigma)$ where $G=(V,E)$, we denote by $c(G)$ the number of the connected components of $G$. For a given function $g:V\to \mathbb{R}$, we define two edge sets \[E_{g^+}=\{  \{x,y\}\in E:g(x)\sigma_{xy}g(y)>0  \} \,\,\text{and}\,\,E_{g^-}=\{  \{x,y\}\in E: g(x)\sigma_{xy}g(y)<0  \},\] and two signed graphs \begin{align*}
     \Gamma_{g^+}=(G_{g^+},\sigma)\,\,\text{with}\,\,G_{g^+}=(V,E_{g^+}) \,\,\text{and}\,\, \Gamma_{g^-}=(G_{g^-},\sigma)\,\,\text{with}\,\,G_{g^-}=(V,E_{g^-}). 
\end{align*}
      Let
 \begin{align*}
     l(G):=|E|-&|V|+c(G),\,\, l(\Gamma_{g^+}):=|E_{g^+}|-|V|+c(G_{g^+}) \\
 &\text{and}\,\, l(\Gamma_{g^-}):=|E_{g^-}|-|V|+c(G_{g^-}).
 \end{align*} 
 Notice that the number $l(G)$ is the minimal number of edges that need to be removed from $G$ in order to turn it into a forest. 
 We further denote by $z(g)$ the number of zeros of $g$.

The theorem below is our main result in this section.
\begin{theorem}\label{thm:13}
      Let $\Gamma=(G,\sigma)$ be a connected signed graph. Let $f$ be an eigenfunction of $\Delta_p^{\sigma}$ corresponding to an eigenvalue $\lambda$,  and   $\lambda_{1}\leq \ldots\leq \lambda_n$ be the variational eigenvalues of $\Delta_p^{\sigma}$,  where $p>1$. Assume that $\{x_i\}_{i=1}^{z(f)}$ are the zero vertices of $f$. Define $\Gamma'=(G',\sigma)$, where $G'=(V',E')$ is the subgraph induced by $V':=V\setminus\{x_i\}_{i=1}^{z(f)}$. Then we have:
\begin{itemize}
	\item [(i)]if $\lambda>\lambda_k$, then $\mathfrak{S}(f)\geq k-l(G')+l(\Gamma_{f^+})-z(f)+c(G')$;
	\item [(ii)]if $\lambda=\lambda_k>\lambda_{k-1}$ and the multiplicity of $\lambda_k$ is $r$, then $\mathfrak{S}(f)\geq k+r-1-l(G')+l(\Gamma_{f^+})-z(f)$.
\end{itemize} 
\end{theorem}
This theorem can be regarded as a signed version of \cite[Theorem 3.10]{DPT21} for generalized $p$-Laplacian on graphs, which is an extension of previous results on $2$-Laplacian in the work of Berkolaiko \cite{Berkolaiko08} and Xu-Yau \cite{XY12}. The lower bound estimates of strong nodal domains for $2$-Laplacians on signed graphs have been discussed in \cite{Ali16, GL21+}. Restricting to the linear case $p=2$, Theorem \ref{thm:13} is, in fact, weaker than \cite[Theorem 6.6]{GL21+}. The estimate in \cite[Theorem 6.6]{GL21+} uses the cardinality of the so-called Fiedler zero set -- a special subset of the whole zero set -- instead of $z(f)$. It is still open whether the lower bound in \cite[Theorem 6.6]{GL21+} can be extended to the current setting or not. 

We first prove the following lemma.
\begin{lemma}\label{Lemma:E-}
	Let $\Gamma=(G,\sigma)$ be a signed graph with $G=(V,E)$ and $(\lambda,f)$ be an eigenpair of $\Delta_p^{\sigma}$ on $\Gamma$. 
Denote  by $E_z$ the set of edges incident to the zero  vertices of $f$. Then we have
\begin{equation}\label{eq:-}
	|E_{f^-}|=|E|-|E_z|+z(f)-|V|-l(\Gamma_{f^+})+\mathfrak{S}(f)\leq |E|-|V|+\mathfrak{S}(f)-l(\Gamma_{f^+}).
\end{equation}
\end{lemma}
\begin{proof}

First, we remove  all  zero vertices of $f$ on $\Gamma=(G,\sigma)$ to get an induced subgraph $\Gamma'=(G',\sigma')$ with $G'=(V',E')$. 
 By definition, we have 
 	$|E'|=|E|-|E_z|$.
  
Next, assume $E_{f^+}=\{e_i\}_{i=1}^m$ with $m=|E_{f^+}|$ and $e_i=\{x_i,y_i\}$. We remove  all edges in $E_{f^+}$ one by one to get the graph $\Gamma''=(G'',\sigma'')$ with $G''=(V'',E'')$ at end. At the $i$-th step, we remove the edge $e_i$. 
For $j=1,\ldots,m$, let $\Gamma^j=(G^j,\sigma^{(j)})$ be the signed graph which is obtained by removing the edges $\{e_i\}_{i=1}^{j}$ from $\Gamma$.
We denote $\Delta l^+(e_j,f)=l(\Gamma^j_{f^+})-l(\Gamma^{j-1}_{f^+})$, and define $\Delta v(e_j,f)$ to be the variation between the number of nodal domains of $f$ on $\Gamma_j$ and $\Gamma_{j-1}$, where we use $\Gamma_0$ to denote $\Gamma$. By a direct computation, we have for any $j=1,\ldots,m$ that
\begin{equation}\label{eq:add}
\Delta v(e_j,f)-\Delta l^+(e_j,f)=
\left\{
\begin{aligned}
	  0, &\quad f(x_j)\sigma_{x_jy_j}f(y_j)<0; \\
	  1, &\quad  f(x_j)\sigma_{x_jy_j}f(y_j)>0.
\end{aligned}
\right.
\end{equation}
Therefore, we derive that
\begin{equation}\label{eq:7.1}
    \sum_{j=1}^{m}(\Delta v(e_j,f)-\Delta l^+(e_j,f))=|E_{f^+}|=|E'|-|E_{f^-}|=|E|-|E_z|-|E_{f^-}|.
\end{equation}
On the graph $\Gamma''$, there are no strong nodal domain walks of $f$. Hence, $l^+(\Gamma_{f^+}'')=0$ and the number of nodal domains of $f$ on $\Gamma''$ is $|V|-z(f)$. Then, we have \begin{equation}\label{eq:7.2}
\sum_{j=1}^{m}\Delta v(e_j,f)=|V|-z(f)-\mathfrak{S}(f)\,\,\text{and}\,\, \sum_{j=1}^{m}\Delta l^+(e_j,f)=-l(\Gamma_{f^+}).\end{equation}
Combining (\ref{eq:7.1}) and (\ref{eq:7.2}), we have  
\begin{equation*}
	|V|-z(f)-\mathfrak{S}(f)+l(\Gamma_{f^+})=|E|-|E_z|-|E_{f^-}|.
\end{equation*}
This implies 
\begin{equation*}
\begin{aligned}
	|E_{f^-}|&=|E|-|E_z|-|V|+z(f)-l(\Gamma_{f^+})+\mathfrak{S}(f)\\
	&\leq |E|-|V|-l(\Gamma_{f^+})+\mathfrak{S}(f).
\end{aligned}
\end{equation*}
The last inequality is because of $|E_z|\geq z(f)$. Then we complete the proof.
\end{proof}

\begin{proof}[Proof of Theorem \ref{thm:13} (i)]
	First, since $\Gamma'$ is obtained by removing all  zero vertices of $f$ from $\Gamma$, we can define a new $p$-Laplacian on $\Gamma'$ as \eqref{eq:node} denoted by $\Delta_p^{\sigma'}$. Next, we remove all the edges in  $E'_{f^-}$ of $f$ on $\Gamma'$ one by one to get the graph $\Gamma''=(G'',\sigma'')$ with $G''=(V'',E'')$ at end. At each step, we define a new $p$-Laplacian as in \eqref{eq:edge}. Denote by $\Delta_p^{\sigma''}$ the $p$-Laplacian on we obtain at end. By Theorem  \ref{Lemma:edge} and Theorem \ref{thm:node}, we  get
	\begin{equation*}
		\lambda>\lambda_k\geq \lambda_{k-z(f)}'\geq \lambda''_{k-z(f)-|E_{f^-}|}. 
	\end{equation*}
For any $\{x,y\}\in E''$, we have $f(x)\sigma_{xy}f(y)>0$. Define $\tau(x) =\frac{f(x)}{|f(x)|}$ for any $x\in V''$. It is a switching function such that $\sigma^{\tau}\equiv 1$ and $\tau f$ is positive on all vertices in $G''$. By switching invariance of  eigenvalues and the Perron-Frobenius type theorem \cite[Theorem 4.1]{DPT21},  $\lambda$ is the first variational eigenvalue of $\Delta_p^{\sigma''}$. Since $		\lambda> \lambda''_{k-z(f)-|E_{f^-}|}$, 
we have 
\begin{equation*}
	k-z(f)-|E_{f^-}|\leq 0.
\end{equation*}
We use Lemma \ref{Lemma:E-} to obtain
\begin{equation*}
\begin{aligned}
	\mathfrak{S}(f)&\geq k-z(f)-(|E|-|E_z|)+(|V|-z(f))+l(\Gamma_{f^+})\\
	&\geq k-z(f)-l(G')+c(G')+l(\Gamma_{f^+}).
\end{aligned}
\end{equation*}
This concludes the proof of (i).
\end{proof}
\begin{proof}[Proof of Theorem \ref{thm:13} (ii)]
As above, we first define a new $p$-Laplacian on $\Gamma'$ as in \eqref{eq:node}  denoted by $\Delta_p^{\sigma'}$. Let $\{\lambda_i'\}_{i=1}^{n-z(f)}$ be the variational eigenvalues of $\Delta_p^{\sigma'}$. By Theorem \ref{thm:node}, we have 
\[\lambda_{k+r-1-z(f)}'\leq \lambda \leq \lambda_{k+r-1}'.\]
Then there is a unique $h\in \mathbb{N}$ such that $\lambda\in [\lambda_{h}',\lambda_{h+1}')$. So we have $\lambda_{k+r-1-z(f)}'<\lambda'_{h+1}$. This implies $h+1>k+r-1-z(f) $.

Next, we remove $l(G')$ edges of $\Gamma'$ to make $\Gamma'$ to be a forest $T$. Assume that $\{e_i\}_{i=1}^{l(G')}$ are all the edges we remove, where $e_i=\{x_i,y_i\}$.  We define $\Gamma_j$ as the subgraph obtained by removing edges $\{e_i\}_{i=1}^j$ from $\Gamma'$. At each step, we define a new $p$-Laplacian on $\Gamma_j$ as in \eqref{eq:edge} denoted by $\Delta_{p,j}^{\sigma}$. Denote by $\{\lambda_k^{(j)}\}_{k=1}^{n-z(f)}$ the variational eigenvalues of $\Delta_{p,j}^{\sigma}$.

At the $j$-th  step, suppose that 
 $\lambda\in \left[\lambda_l^{(j)},\lambda^{(j)}_{l+1}\right)$. By Theorem \ref{Lemma:edge}, we have  $\lambda\in \left[\lambda^{(j+1)}_{l-1},\lambda^{(j+1)}_{l+1}\right)$ if $f(x_j)\sigma_{x_jy_j}f(y_j)<0$, and $\lambda\in \left[\lambda^{(j+1)}_{l},\lambda^{(j+1)}_{l+2}\right)$ if $f(x_j)\sigma_{x_jy_j}f(y_j)>0$.
  Define 
\begin{equation*}
	\Delta n(e_j,f)=\left\{
	\begin{aligned}
		    -1, \quad &\text{if\,\,} \lambda\in \left[\lambda^{(j+1)}_{l-1},\lambda^{(j+1)}_{l}\right); \\
		     0, \quad &\text{if\,\,} \lambda\in \left[\lambda^{(j+1)}_{l},\lambda^{(j+1)}_{l+1}\right);\\
			+1, \quad  &\text{if\,\,} \lambda\in \left[\lambda^{(j+1)}_{l+1},\lambda^{(j+1)}_{l+2}\right),
	\end{aligned}
	\right.
\end{equation*}


 and
\begin{equation*}
	\Delta M(e_j,f)=\left\{
	\begin{aligned}
		-1,\quad &\text{if\,\,} f(x_j)\sigma_{x_jy_j}f(y_j)<0 \text{\,\,and\,\,}\lambda\in \left[\lambda^{(j+1)}_{l-1},\lambda^{(j+1)}_{l}\right);\\
		0,\quad &\text{if\,\,} f(x_j)\sigma_{x_jy_j}f(y_j)<0 \text{\,\,and\,\,}\lambda\in \left[\lambda^{(j+1)}_{l},\lambda^{(j+1)}_{l+1}\right);\\
		-1, \quad &\text{if\,\,} f(x_j)\sigma_{x_jy_j}f(y_j)>0 \text{\,\,and\,\,}\lambda\in \left[\lambda^{(j+1)}_{l},\lambda^{(j+1)}_{l+1}\right);\\
		0, \quad &\text{if\,\,} f(x_j)\sigma_{x_jy_j}f(y_j)>0 \text{\,\,and\,\,}\lambda\in \left[\lambda^{(j+1)}_{l+1},\lambda^{(j+1)}_{l+2}\right).
\end{aligned}
\right.
\end{equation*}
Recalling \eqref{eq:add}, we derive by a direct computation that
\begin{equation}\label{0}
	\Delta n(e_j,f)-\Delta M(e_j,f)=\Delta v(e_j,f)-\Delta l^+(e_j,f),
\end{equation}
where $v(e_j,f)$ and $\Delta l^+(e_j,f)$ are defined as in the proof of Lemma \ref{Lemma:E-}.
  Since $f$ has no zeros on the forest $T$, we have by Theorem \ref{thm:forest} that $\lambda$ is a variational eigenvalue. Suppose $\lambda=\eta_m<\eta_{m+1}$, where $\{\eta_i\}_{i=1}^{n-z(f)}$ are variational eigenvalues of the $p$-Laplacian on $T$. Assume that $f$ has $\mathfrak{S}_T(f)$ nodal domains on $T$. By Theorem \ref{thm:forest} again, we have $\mathfrak{S}_T(f)= m$.
By definition, there holds that
\begin{align}
\sum_{j=1}^{l(G')}\Delta n(e_j,f)&=m-h, \label{eq:low1}\\
\sum_{j=1}^{l(G')}\Delta v(e_j,f)&=\mathfrak{S}_T(f)-\mathfrak{S}(f),\label{eq:low2}\\
\sum_{j=1}^{l(G')}\Delta l^+(e_j,f)&=-l(\Gamma_{f^+}),\label{eq:low3}\\
\sum_{j=1}^{l(G')}\Delta M(e_j,f)&\geq -l(G').\label{eq:low4}
\end{align}
We insert \eqref{eq:low1}, \eqref{eq:low2}, \eqref{eq:low3} and \eqref{eq:low4} into \eqref{0} to get 
\begin{equation*}
	\mathfrak{S}(f)= \mathfrak{S}_T(f)+l(\Gamma_{f^+})-m+h-l(G')= l(\Gamma_{f^+})+h-l(G')\geq k+r-1-z(f)+l(\Gamma_{f^+})-l(G').
\end{equation*}
This concludes  the proof.
\end{proof}

\section*{Acknowledgement}
We thank the anonymous referee for many comments and suggestions which greatly helped us improve the quality of the presentation of our paper.
CG and SL are supported by the National Key R and D Program of China 2020YFA0713100, the National Natural Science Foundation of China (No. 12031017), and Innovation Program for Quantum Science and Technology 2021ZD0302902. DZ is supported by the Fundamental Research Funds for the Central Universities (No. 7101303088).

\end{document}